\documentclass[a4paper,10pt]{article}
\usepackage[T1]{fontenc}
\usepackage{lmodern}
\usepackage{geometry}
\usepackage{amsthm}
\usepackage{amssymb}
\usepackage{amsmath}
\usepackage{xcolor}
\newtheorem{thm}{Theorem}[section]
\newtheorem{conjecture}[thm]{Conjecture}
\newtheorem{lemma}[thm]{Lemma}
\newtheorem{definition}[thm]{Definition}
\usepackage{cite}
\usepackage{tikz}
\usetikzlibrary{arrows.meta,calc,decorations.pathreplacing,patterns,positioning,shapes.geometric}

\usepackage[
  colorlinks=true,
  anchorcolor=blue,
  filecolor=blue,
  linkcolor=red,
  urlcolor=blue,
  citecolor=blue,
  hypertexnames=false,
  pdftitle={Longest cycles intersect linearly in highly connected graphs},
  pdfauthor={Jie Ma, Bo Ning, and Ziyuan Zhao}
]{hyperref}
\begin{document}

\title{Longest cycles intersect linearly in highly connected graphs}

\author{Jie Ma\thanks{School of Mathematical Sciences, University of Science and Technology of China, Hefei, Anhui 230026, China, and Yau Mathematical Sciences Center, Tsinghua University, Beijing 100084, China.
Research supported by National Key Research and Development Program of China 2023YFA1010201 and National Natural Science Foundation of China grant 12125106. Email: {\tt jiema@ustc.edu.cn}.}\and Bo Ning\thanks{College of Computer Science, Nankai University, Tianjin 300350, P.R. China.
Partially supported by the National Natural Science
Foundation of China (No. 12371350) and Fundamental Research Funds for the Central Universities, Nankai University (No. 63243151). Email: {\tt bo.ning@nankai.edu.cn}. }
\and Ziyuan Zhao\thanks{School of Mathematical Sciences, University of Science and Technology of China, Hefei, Anhui 230026, China. Research supported by Innovation Program for Quantum Science and Technology 2021ZD0302902. Email: {\tt zyzhao2024@mail.ustc.edu.cn}.}
}
\date{}
\maketitle
\begin{abstract}
A longstanding conjecture attributed to Smith (1984) asserts that for every $k\ge2$, any two longest cycles in a $k$-connected graph share at least $k$ vertices. In this paper, we prove the first linear lower bound, showing that any two longest cycles in a $k$-connected graph share at least $k/600$ vertices. Departing from previous Tur\'an-type extremal arguments, we develop a novel structural approach that also yields applications to related problems on longest cycles and paths.
\end{abstract}

\section{Introduction}
Cycles, and in particular longest cycles, are among the central objects of study in graph theory. A longstanding line of research concerns the structural properties of longest cycles in various graph families. In this paper, we study one of the classical problems in this area, often attributed to Smith (see Gr{\"o}tschel~\cite{grotschel1984intersections}).

\begin{conjecture}[Smith's conjecture]\label{conj:smith} 
For every integer $k\geq 2$, any two longest cycles in a $k$-connected graph share at least $k$ vertices.
\end{conjecture}

\noindent This conjecture was repeatedly mentioned in the influential surveys of Bondy~\cite{bondy1996basic,bondy2014beautiful} and listed as one of the 100 unsolved problems in graph theory in the textbook~\cite{bondy2008}.
The bound in Conjecture~\ref{conj:smith} is sharp, by considering the complete bipartite graph $K_{k,n-k}$ for $n\geq 3k$. This graph has vertex-connectivity $k$, while its longest cycles have length $2k$; moreover, by choosing two such cycles that use disjoint sets of $k$ vertices from the larger part, we obtain two longest cycles whose intersection consists exactly of the $k$ vertices of the smaller part.

Smith's conjecture remains open, and was previously known only for $2\leq k\leq 8$.
The case $k = 2$ follows readily from Menger's theorem. 
A proof for the case $k=3$ can be found in Babai~\cite{babai1979long} (see Lemma~2 therein), where it was applied ingeniously to derive a lower bound for the length of a longest cycle in vertex-transitive graphs.
A related early result on Smith's conjecture is due to Gr{\"o}tschel~\cite{grotschel1984intersections}, who proved that, for any $1\leq t\leq 5$, if $G$ is a 2-connected graph with at least $t+1$ vertices and contains
two longest cycles whose intersection is a set $W$ of exactly $t$ vertices, then $G-W$ is disconnected. Gr{\"o}tschel~\cite{grotschel1984intersections} subsequently conjectured that the same statement should hold for $t\in \{6,7\}$, which was later confirmed by Stewart and Thompson~\cite{stewart1995intersections}. 
These results establish the conjecture for $k\leq 8$.
Other references include~\cite{shabbir2013intersecting,zamfirescu2001intersecting}.
More recently, the conjecture has also been verified for very dense graphs: Guti{\'e}rrez and Valqui~\cite{gutierrez2024two} proved it for $n$-vertex graphs when $k\ge (n+16)/7$.

There has been extensive work on general lower bounds for the minimum intersection of two longest cycles in $k$-connected graphs.
The first nontrivial general lower bound was proved by Burr and Zamfirescu (reported in~\cite{chen1998intersections}), who showed that any two longest cycles in a $k$-connected graph have at least $\sqrt{k}-1$ common vertices.
This was improved by G. Chen, Faudree, and Gould~\cite{chen1998intersections} in 1998 to $\Omega(k^{3/5})$, using a novel Tur\'an-type argument that proved fruitful in subsequent works.
In 2025, Groenland, Longbrake, Steiner, Turcotte, and Yepremyan~\cite{groenland2024longest} further improved this to $\Omega(k^{5/8})$. 
Subsequently, the first and third authors of the present paper~\cite{ma2025intersections} used a supersaturation-type new argument to obtain the bound $\Omega(k^{2/3})$.
Most recently, D. Chen~\cite{chen2026improved} improved the bound to $\Omega(k^{8/11})$ with the aid of computer search.

The main result of this paper is a proof of Conjecture~\ref{conj:smith} up to a constant factor.

\begin{thm}\label{thm:smith-linear}
For every $k\geq 2$, any two longest cycles in a $k$-connected graph share at least $k/600$ vertices.
\end{thm}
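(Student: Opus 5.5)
We argue by contradiction. Let $C_1,C_2$ be two longest cycles, of common length $L$, in a $k$-connected graph $G$, and suppose $W=V(C_1)\cap V(C_2)$ has $|W|=t<k/600$. Deleting $W$ cuts each $C_i$ into at most $t$ segments, which are paths in $G-W$. Our aim is to find a longer cycle, or a longest cycle that meets $C_1$ (or $C_2$) in a way forbidden by maximality, which gives the contradiction.

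\textbf{Step 1 (linkage).} Since $G-W$ is still $(k-t)$-connected, Menger's theorem gives $k-t\ge 599k/600$ vertex-disjoint paths from $V(C_1)\setminus W$ to $V(C_2)\setminus W$, each internally disjoint from $C_1\cup C_2$. By pigeonhole, many of these paths attach to a single segment $P$ of $C_1$ and a single segment $Q$ of $C_2$. It is more robust to keep track of how all of them distribute over the at most $t$ segments of each cycle.

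\textbf{Step 2 (exchange argument).} Take two linking paths $R,R'$ that join $P$ at $a,a'$ and $Q$ at $b,b'$. Form a new cycle from the longer arc of $C_1$ through $a,a'$, the paths $R,R'$, and the appropriate arc of $Q$. Maximality of $L$ then forces inequalities of the form $d_{C_1}(a,a')+d_{C_2}(b,b')\le |R|+|R'|$, with the sign and direction depending on how the endpoints interleave. The classical Tur\'an-type arguments sum these inequalities over all pairs and lose a polynomial factor. Instead, we record the structure as an auxiliary ordered ``crossing pattern'': linking paths are ordered along $C_1$ and along $C_2$ (cyclically, modulo the $t$ cut points). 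By Erd\H{o}s--Szekeres we may extract a monotone subfamily. The plan is to find $\Omega(k)$ linking paths whose endpoints appear in the same (or reversed) cyclic order on both cycles, losing only constant factors through a segment-pair decomposition rather than square roots.

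\textbf{Step 3 (contradiction from a consistently ordered family).} Given $s$ linking paths $R_1,\dots,R_s$ consistently ordered along both cycles, consider the ``ladder'' they form with $C_1$ and $C_2$. Pair consecutive rungs and build a cycle that alternates between the two cycles. Summing the length constraints from all consecutive pairs shows that some such cycle has length exceeding $L$ once $s$ is large compared to $t$, roughly $s>Ct$. Here the cut vertices in $W$ are the only obstruction, since each can spoil $O(1)$ pairs.

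\textbf{Main obstacle.} Step 2 is the hard part: a naive monotone-subsequence extraction loses a square root and only recovers the $\sqrt{k}$ bound. To avoid that loss, we would exploit the $(k-t)$-connectivity repeatedly. If the crossing pattern is far from monotone, then rerouting along a non-monotone pair already yields a longer cycle, or a new longest cycle $C'$ with $|V(C')\cap V(C_1)|$ smaller. We would choose the counterexample pair $(C_1,C_2)$ minimizing $|W|$ and, subject to that, extremal in some potential such as $|V(C_1)\cup V(C_2)|$. The hoped-for structural lemma is that in an extremal pair, the linking paths between any two segments are nearly monotone. The constant $600$ would come from accumulating the constant losses across these steps.
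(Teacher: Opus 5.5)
\textbf{There is a genuine gap: the proposal does not contain a proof.} The step that carries all the difficulty, your Step~2, is left as a ``hoped-for structural lemma''. The framework around it is also partly mis-set.

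First, the exchange inequality you state is not what maximality gives. Suppose disjoint linking paths $R,R'$ join the same segment $P$ of $C_1-W$ (at $a,a'$) to the same segment $Q$ of $C_2-W$ (at $b,b'$). Since $V(Q)\cap W=\emptyset$, the path $R\cup Q[b,b']\cup R'$ is internally disjoint from $C_1$. Replacing $P[a,a']$ by it yields $|P[a,a']|\ge|Q[b,b']|+|R|+|R'|$. Symmetrically, $|Q[b,b']|\ge|P[a,a']|+|R|+|R'|$, and adding the two is already a contradiction.

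Consequently:
\begin{itemize}
\item each of the at most $t^2$ segment pairs carries at most one linking path;
\item your pigeonhole in Step~1 therefore gives only $k-t\le t^2$, which is the Burr--Zamfirescu bound $\sqrt{k}-1$;
\item your lemma that ``the linking paths between any two segments are nearly monotone'' is vacuous.
\end{itemize}
All the content lies in how paths attached to \emph{different} segment pairs interleave. For two such paths, at least one of the two exchanges is unavailable: if their ends on $C_2$ lie in different segments, every arc of $C_2$ between them meets $W\subseteq V(C_1)$. So maximality gives at most one-sided constraints, and these are exactly what the Tur\'an-type arguments exploit, with polynomial losses.

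Your Step~3 is in fact correct and easier than you suggest. If $s>2t$ linking paths occur in the same cyclic order on both cycles, some consecutive pair has both arcs free of $W$, so it lands on a single segment pair, a contradiction. But this shows that producing $\Omega(k)$ consistently ordered linking paths is as strong as the theorem itself. Erd\H{o}s--Szekeres gives only about $\sqrt{k}$. Neither the extremal choice of $(C_1,C_2)$ nor the rerouting you sketch is specified well enough to do better.

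\textbf{The paper's missing idea.} The paper avoids ordering questions altogether by playing the maximality of $X=C_1$ against cycles \emph{inside} $G-X$.
\begin{itemize}
\item If a 2-connected piece $H$ of $G-X$ contains a cycle of length $\ell$, then \emph{any} two disjoint $(X,H)$-paths can be rerouted through that cycle. So their ends on $X$ are at distance at least $\ell/2$ (Theorem~\ref{thm:del1cut}(5)). With $k$ disjoint such paths this gives $c(G)\ge k\ell/2$, and no monotonicity is needed.
\item To obtain such cycles from the at least $c(G)-2m$ edges of $Y-X$, the paper splits $G-X$ along block-cut trees into $O(m)$ pieces (Theorem~\ref{thm:del1cut}).
\item It then covers all but $O(m)$ edges of $Y-X$ by $O(m)$ subpaths of two kinds. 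Some are $(H,1/13)$-amplifiers, obtained from a local-connectivity strengthening of Bondy--Locke (Theorem~\ref{lem:amplifier}). The rest are separated from the remainder of $Y-X$ by $O(m)$ vertices. When $k>117m$, Lemma~\ref{lem:XY-path} forces these to have length $O(m)$; that lemma is exactly your one-path-per-segment-pair observation.
\item This settles the case $c(G)>23mk$. In the remaining range, $G-X$ contains a path of length at least $c(G)/m-2$. The paper handles this with the Dirac-type bound $|X|>(\min\{k,p\}-2)\delta/25$ of Theorem~\ref{thm:small-circ}, an ingredient with no counterpart in your plan.
\end{itemize}
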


We emphasize that the proof of Theorem~\ref{thm:smith-linear} differs from the extremal techniques (Tur\'an-type and supersaturation-type arguments) employed in the previous works~\cite{chen1998intersections,groenland2024longest,ma2025intersections,chen2026improved}. 
Instead, our proof relies on a completely structural approach, which includes several new developments for related problems.
In particular, we establish and make use of a strengthening of a theorem of Bondy and Locke (Theorem~\ref{lem:amplifier}) and a higher-connectivity extension of Dirac's theorem (Theorem~\ref{thm:small-circ}), both of which might be of independent interest.
We refer the reader to Section~\ref{sec:sketch} for a proof sketch of Theorem~\ref{thm:smith-linear}.

The rest of the paper is organized as follows.
In Section~\ref{sec:2}, we provide some preliminaries.
In Section~\ref{sec:sketch}, we give a heuristic outline of the proof of Theorem~\ref{thm:smith-linear}.
In Section~\ref{sec:amplifier}, we prove a strengthening of the Bondy--Locke theorem (Theorem~\ref{lem:amplifier}) through a new concept, called an ``amplifier'', which shows that, in any graph satisfying a local 3-connectivity condition, the lengths of a longest cycle and a longest path are of the same order.
In Section~\ref{sec:3}, we prove a structural decomposition result that ensures that the subgraphs under consideration behaves like 2-connected graphs.
In Section~\ref{sec:4}, we prove an edge-covering result using amplifiers and use it to prove Theorem~\ref{thm:smith-linear} under the assumption that the length of longest cycles is relatively large. 
In Section~\ref{sec:5}, we provide a self-contained proof of a higher-connectivity extension of Dirac's theorem (Theorem~\ref{thm:small-circ}).
In Section~\ref{sec:complete}, we complete the proof of Theorem~\ref{thm:smith-linear}.
In the final section, we conclude with several applications of our approach and discuss some related open problems.

\section{Preliminaries}\label{sec:2}
We use standard graph-theoretic notation as in \cite{diestel2024graph}.
Unless stated otherwise, all graphs are simple and finite.
For an integer $r$, let $[r]:=\{1,2,\ldots,r\}$.

Let $G$ be a graph with vertex set $V(G)$ and edge set $E(G)$. We use $\delta(G)$ to denote the minimum degree of $G$.
For either a subgraph or a vertex set $A$ of $G$, let $G-A$ denote the graph obtained from $G$ by deleting the vertices of $A$.
When $A=\{v\}$, we abbreviate $G-\{v\}$ by $G-v$.
We write $H\subseteq G$ when $H$ is a subgraph of $G$.
For a collection of graphs $\mathcal{H}$, define
$V(\mathcal{H})=\bigcup_{H\in\mathcal{H}}V(H)$ and
$E(\mathcal{H})=\bigcup_{H\in\mathcal{H}}E(H)$.

Let $A,B\subseteq V(G)$ be two non-empty sets, which are not necessarily distinct or disjoint. 
A path $P=x_1\cdots x_t$ in $G$ is called an \textit{$(A,B)$-path} if $x_1\in A$, $x_t\in B$, and $V(P)\setminus\{x_1,x_t\}$ is disjoint from $A\cup B$.
We write $(a,B)$-path for a $(\{a\},B)$-path.
If $A$ is a subgraph, we use $(A,B)$-path to denote a $(V(A),B)$-path.
We use the same convention in the case $B$ is a subgraph.
For a vertex set $S\subseteq V(G)$, we say $S$ \textit{separates $A$ and $B$} in $G$ if every $(A,B)$-path intersects $S$.
If $A$ is a subgraph of $G$, saying that $S$ separates $A$ and $B$
means that $S$ separates $V(A)$ and $B$.
A set $S\subseteq V(G)$ is a \emph{vertex cut}
of $G$ if $G-S$ is disconnected. 
If $\{v\}$ is a cut, then $v$ is a \textit{cut-vertex}.
A \textit{block} is a
maximal connected induced subgraph without a cut-vertex.
Hence, each block is either an isolated vertex, an edge together with
its endpoints, or a maximal 2-connected subgraph.

The length of a path or cycle is its number of edges; we denote the
lengths of $P$ and $C$ by $|P|$ and $|C|$, respectively.
We call a path $P$ \textit{trivial} if $|P|=0$, and \emph{nontrivial} otherwise.
For $a,b\in V(G)$, we write $\operatorname{dist}_G(a,b)$ for the minimum length of all $(a,b)$-paths in $G$. 
We denote by $c(G)$ the \textit{circumference} (that is, the
length of a longest cycle) of $G$ and by $p(G)$ the length of a longest
path in $G$.
For the empty graph $\emptyset$, we set $p(\emptyset):=0$.

Let $P$ be a path with endpoints $a,b$, and fix the orientation on $P$ from $a$ to $b$. For
$u,v\in V(P)$, write $u\leq_P v$ if $a,u,v,b$ occur on $P$ (with repetition permitted) along this orientation.
The relations $<_P$, $\geq_P$, and $>_P$ are defined similarly.
Two paths are \textit{disjoint} if their vertex sets are disjoint,
and \textit{internally disjoint} if every common vertex is an endpoint of at least one of the two paths.

For a tree $T$ and vertices $u,v \in V(T)$, we denote by $T[u,v]$ the unique $(u,v)$-subpath of $T$.
Define
$T[u,v):=T[u,v]-v$, $T(u,v]:=T[u,v]-u$, and
$T(u,v):=T[u,v]-\{u,v\}$.

We use the following form of Menger's theorem; see
\cite[Theorem~3.3.1]{diestel2024graph}.
\begin{thm}
Let $A$ and $B$ be disjoint vertex sets in a graph $G$, and let $t$ be a positive integer.
Then $G$ contains $t$ disjoint $(A,B)$-paths if and
only if no vertex set of size less than $t$ separates $A$ and $B$ in $G$.
\end{thm}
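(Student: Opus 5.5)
The approach is the classical edge-induction proof (Diestel's first proof of Menger's theorem), with edge contraction to locate a small separator.

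The ``only if'' direction is immediate. If $G$ contains $t$ disjoint $(A,B)$-paths, then any set $S$ separating $A$ and $B$ must meet each of them. Since the paths are disjoint, $|S|\ge t$.

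For the ``if'' direction, let $k$ be the minimum size of a set separating $A$ and $B$ in $G$. We show by induction on $|E(G)|$ that $G$ contains $k$ disjoint $(A,B)$-paths; this suffices, since the hypothesis gives $k\ge t$.

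\emph{Base case.} If $E(G)=\emptyset$, the only $(A,B)$-paths are trivial ones at vertices of $A\cap B$. Since $A$ and $B$ are disjoint, there are none, so $k=0$ and there is nothing to prove. More generally, to make the induction run cleanly, we prove the statement for arbitrary (not necessarily disjoint) $A,B$. In that case $A\cap B$ lies in every separator, and the base case gives $k=|A\cap B|$ trivial paths.

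\emph{Inductive step.} Fix an edge $e=xy$ and form the contraction $G/e$ with new vertex $v_e$. Let $A',B'$ be $A,B$ with $x$ or $y$ replaced by $v_e$ where applicable.

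If $G/e$ has $k$ disjoint $(A',B')$-paths, then uncontracting $v_e$ yields $k$ disjoint $(A,B)$-paths in $G$. Here we expand $v_e$ into $x$, $y$ or the edge $xy$ as needed, and if necessary pass to minimal subpaths so that interiors avoid $A\cup B$.

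Otherwise, by induction, some set $Y$ with $|Y|<k$ separates $A'$ and $B'$ in $G/e$. Then $v_e\in Y$, since otherwise $Y$ itself would separate $A$ and $B$ in $G$, contradicting the minimality of $k$. Hence $X:=(Y\setminus\{v_e\})\cup\{x,y\}$ separates $A$ and $B$ in $G$ and has $|X|=|Y|+1\le k$, so $|X|=k$ by minimality.

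\emph{Applying induction to $G-e$.} Consider $G-e$, which has fewer edges. Every set separating $A$ and $X$ in $G-e$ also separates $A$ and $B$ in $G$. Indeed, any $(A,B)$-path in $G$ meets $X$, and its initial segment up to the first vertex of $X$ avoids $e$ because $x,y\in X$. So every $(A,X)$-separator in $G-e$ has size at least $k$, and induction gives $k$ disjoint $(A,X)$-paths in $G-e$. Symmetrically, it gives $k$ disjoint $(X,B)$-paths in $G-e$.

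\emph{Gluing.} Since $|X|=k$, each vertex of $X$ is the endpoint of exactly one path from each family. The two families meet only in $X$: a common vertex outside $X$ would yield an $(A,B)$-walk avoiding $X$, contradicting that $X$ separates $A$ and $B$. Concatenating the paths pairwise at $X$ therefore gives $k$ disjoint $(A,B)$-paths, after trimming to ensure the interiors avoid $A\cup B$.

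\emph{Expected obstacle.} The delicate points are the bookkeeping of the $(A,B)$-path convention, namely that interiors must avoid $A\cup B$, under contraction, gluing and possible overlap of $A$ with $X$. Also delicate is verifying that the two families intersect only in $X$. Both are handled by the separation property of $X$ and by shortening to minimal subpaths.
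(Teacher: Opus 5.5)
Your proof is correct. The paper does not prove this statement but cites it from Diestel (Theorem~3.3.1), and your argument is exactly Diestel's first proof: induction on $|E(G)|$ via the contraction $G/e$, with the necessary extension to non-disjoint $A,B$ and the gluing of the $(A,X)$- and $(X,B)$-path systems in $G-e$; the bookkeeping you flag also works out, since with $|X|=k$ each path in either family meets $X$ only at its endpoint there, so the concatenations are disjoint and trimming preserves this.
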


We also use the following classical theorem of Dirac~\cite{dirac1952some}.
\begin{thm}[Dirac]\label{thm:dirac}
Every $2$-connected graph $G$ satisfies
$c(G)\geq\min\{2\delta(G),|V(G)|\}$.
In particular, for $k\geq 2$, every $k$-connected graph $G$ satisfies $c(G)\geq k$.
\end{thm}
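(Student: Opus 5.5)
The statement immediately above is Dirac's theorem, a classical result the paper cites and does not prove. I would use the standard longest-path argument.

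\emph{Setup.} Let $G$ be $2$-connected with $\delta=\delta(G)$ and $n=|V(G)|$. Take a longest path $P=x_0x_1\cdots x_m$. By maximality every neighbour of $x_0$ and of $x_m$ lies on $P$. Set
\[
S=\{i: x_0x_{i+1}\in E(G)\},\qquad T=\{i: x_ix_m\in E(G)\}.
\]
Both are subsets of $\{0,\dots,m-1\}$ and each has size at least $\delta$.

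\emph{Closing $P$ into a cycle.} If some $i\in S\cap T$ exists, then $x_0\cdots x_i x_m x_{m-1}\cdots x_{i+1}x_0$ is a cycle through all of $V(P)$. If this cycle misses a vertex, connectivity gives an edge leaving it, and we get a path longer than $P$, a contradiction. So the cycle is Hamiltonian, of length $n\ge\min\{2\delta,n\}$.

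\emph{The main case, $S\cap T=\emptyset$.} This is the step I expect to be the real obstacle; it is where $2$-connectivity must be used. Here a direct crossing argument is cleaner. Let $a$ be the largest index with $x_0x_a\in E$ and $b$ the smallest index with $x_bx_m\in E$.
\begin{itemize}
\item If $b<a$, choose crossing chords: $x_0x_{j+1}$ and $x_jx_m$ with $j$ chosen so that the consecutive-neighbour structure yields a cycle of length at least $2\delta$. The standard Pósa-type count works because the neighbours of $x_0$ lie among $x_1,\dots,x_a$ and those of $x_m$ among $x_b,\dots,x_{m-1}$. Taking the cycle $x_0x_{i+1}\cdots x_m x_j \cdots x_0$, with $x_{i+1}$ the first neighbour of $x_0$ beyond $x_j$, gives the bound by counting.
\item If $a\le b$, use $2$-connectivity. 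There is an $(x_0,x_m)$ connection avoiding the relevant cut vertex, so there exist $i<a\le b<j$ and a $P$-avoiding path (possibly a single chord) from $x_i$ to $x_j$. Then the cycle $x_0x_a$, along $P$ back toward $x_0$ \ldots combined with the $x_i$–$x_j$ bridge and $x_bx_m$, has length at least $(a-i)+(j-b)+\ldots\ge\delta+\delta$. This uses $\deg(x_0)\le a$ and $\deg(x_m)\le m-b$.
\end{itemize}

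\emph{Alternative: edge-maximal counterexample.} Cleaner still, I would argue by contradiction. Suppose $c(G)<\min\{2\delta,n\}$, and add edges to make $G$ edge-maximal with this property. Adding edges preserves both $2$-connectivity and the minimum degree lower bound, and $G$ is not complete. For a nonadjacent pair $u,v$, adding $uv$ creates a cycle of length at least $\min\{2\delta,n\}$ through $uv$. Hence $G$ has a $(u,v)$-path $P$ with at least $\min\{2\delta,n\}$ vertices. Apply the pigeonhole on $S,T$ to $P$:
\begin{itemize}
\item If $P$ is Hamiltonian, $S\cap T\neq\emptyset$ since $|S|+|T|\ge 2\delta>|P|$ would contradict; the resulting index gives a cycle of length at least $\min\{2\delta,n\}$.
\item If $P$ is not Hamiltonian, $2$-connectivity is needed. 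The clean route is to choose $P$ longest among $(u,v)$-paths and use the case analysis above.
\end{itemize}

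\emph{Second sentence of the statement.} The ``in particular'' part is immediate. A $k$-connected graph has $\delta\ge k$ and $n\ge k+1$, so $c(G)\ge\min\{2k,k+1\}\ge k$.
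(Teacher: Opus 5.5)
\paragraph{A gap in the case $S\cap T=\emptyset$.} The paper does not prove this statement; it quotes Dirac's theorem as a classical tool. So the only question is whether your argument is complete, and it is not. Your setup, the case $S\cap T\neq\emptyset$, and the derivation of the ``in particular'' clause are fine. The case $S\cap T=\emptyset$, which carries all the content, is not established.

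The decisive gap is the subcase $a\le b$. You assert that $2$-connectivity gives a single $P$-avoiding path from some $x_i$ with $i<a$ to some $x_j$ with $j>b$. That is true when $a=b$, because $G-x_a$ is connected, but it is false when $a<b$. Take $P=x_0x_1\cdots x_{10}$, add all edges inside $\{x_0,\dots,x_3\}$ and inside $\{x_7,\dots,x_{10}\}$, and add the chords $x_1x_6$ and $x_4x_9$.
\begin{enumerate}
\item For every $0<t<10$ some edge $x_ix_j$ has $i<t<j$, so $G$ is $2$-connected.
\item $P$ is Hamiltonian, hence longest, and $a=3$, $b=7$.
\item No edge joins $\{x_0,x_1,x_2\}$ to $\{x_8,x_9,x_{10}\}$.
\end{enumerate}
Large minimum degree does not help: a chain of three copies of $K_{\delta+1}$, consecutive copies joined by two disjoint edges, behaves the same way.

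What $2$-connectivity actually gives is one ear jumping over each internal vertex of $P$. So you need a chain of ears $Q_1,\dots,Q_r$, where $Q_t$ joins $x_{s_t}$ to $x_{e_t}$, with $s_1<a$, $s_{t+1}<e_t<e_{t+1}$, $e_r>b$, and $r$ minimal. Minimality makes the ears internally disjoint and keeps non-consecutive ones from overlapping on $P$. You then take the cycle that alternates through them.

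Your length count also fails: $\deg(x_0)\le a$ does not give $a-i\ge\delta$. The left end of the cycle should be $x_{s_1}x_{s_1-1}\cdots x_0x_tx_{t+1}\cdots x_a$, where $x_t$ is the first neighbour of $x_0$ after $x_{s_1}$. Since $N(x_0)\subseteq\{x_1,\dots,x_{s_1}\}\cup\{x_t,\dots,x_a\}$, its length $s_1+1+(a-t)$ is at least $\delta$. The symmetric piece at $x_m$ also has length at least $\delta$, and the two pieces meet at most in $x_a=x_b$.

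\paragraph{The crossing subcase and the edge-maximal variant.} The subcase $b<a$ is repairable but wrong as written. Chords $x_0x_{j+1}$ and $x_jx_m$ would put $j$ in $S\cap T$, which you have excluded. Also, taking $x_{i+1}$ as the first neighbour of $x_0$ after an arbitrary neighbour $x_j$ of $x_m$ may discard neighbours of $x_m$ lying among $x_{j+1},\dots,x_i$.

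Instead, choose edges $x_0x_i$ and $x_jx_m$ with $j<i$ and $i-j$ minimal. Then no neighbour of $x_0$ or $x_m$ lies among $x_{j+1},\dots,x_{i-1}$. Applying $S\cap T=\emptyset$ separately to the indices in $\{0,\dots,j-1\}$ and in $\{i,\dots,m-1\}$ gives $2\delta\le d(x_0)+d(x_m)\le (j+1)+(m-i+1)=|C|$ for $C=x_0x_1\cdots x_jx_mx_{m-1}\cdots x_ix_0$.

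The edge-maximal variant does not bypass any of this. For a Hamiltonian $(u,v)$-path, $|S|+|T|\ge 2\delta>n-1$ holds only when $2\delta\ge n$; otherwise you are back in the disjoint case. For a longest $(u,v)$-path that is not Hamiltonian, $N(u)$ and $N(v)$ need not lie on $P$, so the case analysis, which relies on this, does not apply.
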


\section{Proof sketch for Theorem~\ref{thm:smith-linear}}\label{sec:sketch}
In this section, we give a heuristic outline of the proof of Theorem~\ref{thm:smith-linear}.
Let $X$ and $Y$ be two longest cycles in a $k$-connected graph $G$. 
Assume that $|V(X)\cap V(Y)|=m$.
The goal is to show that $m=\Omega(k)$.

It is clear that we have $m\geq 2$ (as $G$ is $2$-connected) and by Theorem~\ref{thm:dirac}, $c(G)=|X|=|Y|\geq k$.
Let $G_X:=G-X$, and let $\mathcal Y$ be the collection of nontrivial maximal paths of $Y-X$. 
If $\mathcal Y$ is empty, then $|Y|\leq2m$, and hence $m\geq |Y|/2\geq k/2$, as desired. 
We may therefore assume that $\mathcal Y$ is nonempty. 
Since deleting the $m$ vertices of $V(X)\cap V(Y)$ removes at most $2m$ edges of $Y$, we have $|E(\mathcal Y)|\geq c(G)-2m=\Omega(c(G))$.\footnote{For instance, if $c(G)-2m\leq c(G)/2$, then $m\geq c(G)/4\geq k/4$, as desired.}

To illustrate the proof of Theorem~\ref{thm:smith-linear}, we first consider the most ideal scenario that 
\begin{align}\label{eq:model-case}
    \text{every component of $G_X$ is 3-connected and has at least $k$ vertices.}
\end{align}
This model case serves as our proof motivation and exhibits the general proof mechanisms. 
Let $H$ be a component of $G_X$ containing a longest path in $\mathcal{Y}$.
Then $p(H)\geq |E(\mathcal{Y})|/m\geq \Omega(c(G)/m)$.
A celebrated result of Bondy and Locke \cite{bondy1981relative} states that, in every $3$-connected graph, the lengths of a longest cycle and a longest path are of the same magnitude.
Fix a longest cycle $C$ in $H$. By~\eqref{eq:model-case}, $H$ is $3$-connected and thus we have $|C|=c(H)=\Omega(p(H))=\Omega(c(G)/m)$.
Note that $|V(X)|\geq k$, and $|V(H)|\geq k$ from \eqref{eq:model-case}.
Since $G$ is $k$-connected, by Menger's theorem, there exist $k$ disjoint $(X,H)$-paths in $G$.
Let these paths intersect $X$ at the vertices $u_1, \dots, u_k$ in cyclic order. 
Since $H$ is 2-connected (in fact, 3-connected by~\eqref{eq:model-case}), for each $i \in [k]$, $G$ contains two disjoint $(X,C)$-paths with endpoints $u_i$ and $u_{i+1}$ in $X$. 
By the maximality of $|X|$, a standard cycle-switching argument shows that $\operatorname{dist}_X(u_i,u_{i+1})\geq |C|/2=\Omega(c(G)/m)$, where
$u_{k+1}:=u_1$. 
Summing over all $i\in[k]$ gives
\[
c(G) = |X| \geq \sum_{i\in [k]}\operatorname{dist}_X(u_i, u_{i+1})=k \cdot\Omega(c(G)/m),
\]
which simplifies to $m = \Omega(k)$, as required.

The applications of~\eqref{eq:model-case} in the preceding proof reveal three obvious obstructions to extending this model case to the general case, as follows. Let $H$ denote a component of $G_X$ containing a path in $\mathcal{Y}$ of length $\Omega(c(G)/m)$.
\begin{itemize}
    \item[(a).] $H$ may not contain a cycle $C$ of length $\Omega(p(H))$ if $H$ is not 3-connected.
    \item[(b).] The required two disjoint $(X,C)$-paths may not exist if $H$ is not 2-connected.
    \item[(c).] There cannot be $k$ disjoint $(X,H)$-paths if $|V(H)|<k$.
\end{itemize}
Our strategy for addressing these obstructions consists of three key parts, each developed in one of Sections~\ref{sec:3},~\ref{sec:4}, and~\ref{sec:5}. 
First, to restore the 2-connectivity needed in (b), we prove a decomposition result (Theorem~\ref{thm:del1cut}) in Section~\ref{sec:3}, which, roughly speaking, decomposes the components of $G_X$ into $O(m)$ subgraphs that behave like $2$-connected graphs, while decomposing the edges in $Y-X$ into $O(m)$ subpaths. 
This resolves (b).
Second, based on this decomposition result, in Section~\ref{sec:4} we further ``refine'' these $O(m)$ subpaths so that most edges of $Y-X$ are covered, and each resulting subpath has a cycle containing a positive fraction of its edges. 
This relies heavily on a strengthening of the Bondy--Locke theorem that we establish in Section~\ref{sec:amplifier}, under a much weaker connectivity assumption than 3-connectivity.
We may thus further assume that (a) is resolved. 
It remains to consider (c). If the component $H$ chosen in the preceding proof has fewer than $k$ vertices, then $\Omega(c(G)/m) \leq p(H)\leq k-1$,
implying that $c(G)=O(mk)$.
In this range, in Section~\ref{sec:5} we prove an extension of Dirac's theorem in highly connected graphs (Theorem~\ref{thm:small-circ}) and use it to obtain a lower bound on $c(G)$, leading to the final contradiction.
Together, these three parts complete the proof of Theorem~\ref{thm:smith-linear}.

\section{Beyond the Bondy-Locke theorem -- amplifiers}\label{sec:amplifier}
Recall that $c(G)$ and $p(G)$ denote the lengths of a longest cycle and a
longest path in a graph $G$, respectively. 

A celebrated theorem of Bondy and
Locke~\cite[Theorem~2]{bondy1981relative} states that every 3-connected graph
$G$ satisfies $c(G)\ge 2p(G)/5$. 
Their proof actually gives a stronger assertion:
for every path $P$ in a 3-connected graph $G$, there is a cycle in $G$ containing at least $2|P|/5$ edges of $P$. This motivates the following definition.

\begin{definition}[$(H,\alpha)$-amplifier]\label{def:amplifier}
Let $P$ be a path in a graph $H$, and let $\alpha>0$. We call $P$ an
\textbf{$(H,\alpha)$-amplifier} if some cycle $C\subseteq H$ satisfies
$|E(C)\cap E(P)|\geq \alpha|P|$.
\end{definition}

Using this notation, the Bondy-Locke theorem says that every path in a 3-connected graph $G$ is a $(G,2/5)$-amplifier.
For distinct $a,b\in V(G)$, let $\kappa_G(a,b)$ denote the maximum number of pairwise internally disjoint $(a,b)$-paths in $G$. By Menger's theorem, the connectivity of $G$ satisfies $\kappa(G)=\min_{a,b\in V(G)}\kappa_G(a,b)$. We show, with a short proof, that the weaker local condition $\kappa_G(a,b)\geq 3$ suffices to ensure that every $(a,b)$-path in any graph $G$ is a $(G,1/6)$-amplifier.

\begin{thm}\label{lem:amplifier}
Let $H$ be a multigraph with two given vertices $a$ and $b$.\footnote{In the remainder of this section, the term \textit{multigraph} refers to a graph that may contain parallel edges and loops, while a \textit{graph} always refers to a simple graph.
Parallel edges and loops are treated as cycles of length two and one, respectively.} Suppose that $H$ contains three internally disjoint $(a,b)$-paths. Then for every $(a,b)$-path $P$ in $H$, there is a cycle $C$ in $H$ passing through $a$ and $b$ such that $|E(C) \cap E(P)| \geq |P|/6$.
\end{thm}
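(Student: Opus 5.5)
Since $H$ has three internally disjoint $(a,b)$-paths, I will build the cycle from $P$ together with the part of this structure that avoids $P$. The plan is to find a "bridge" system: the three disjoint $(a,b)$-paths $Q_1,Q_2,Q_3$ either already yield a long cycle, or they attach to $P$ in a way that lets us reroute around a short portion of $P$. We may assume $|P|\ge 6$; otherwise any cycle through $a,b$ formed by two of the $Q_i$ meets at least $0$ edges, which is not enough. So for small $|P|$ I would instead use a cycle $P\cup Q_i$ for a suitable $Q_i$, treated separately below.

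\textbf{Step 1: a cycle containing $P$ up to disjoint detours.} Orient $P$ from $a$ to $b$. Each $Q_i$ leaves $a$, possibly meets $P$ several times, and ends at $b$. I would use the standard Bondy--Locke style argument: consider the union $P\cup Q_1\cup Q_2\cup Q_3$ and the "$P$-bridges", i.e. the subpaths of the $Q_i$ whose ends lie on $P$ and whose interiors avoid $P$. Since there are three internally disjoint $(a,b)$-paths, no single interior vertex $v$ of $P$ separates the part $P[a,v)$ from $P(v,b]$ in $H$. In fact, for any interior vertex $v$ there is a bridge "jumping over" $v$ in $H-v$. This yields a sequence of bridges $B_1,\dots,B_r$ with ends $x_j<_P y_j$ such that $x_1=a$, $y_r=b$, and $x_{j+1}<_P y_j\le_P x_{j+2}$ (a chain of overlapping jumps), by the usual greedy choice of bridges reaching furthest.

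\textbf{Step 2: two cycles from the chain.} Given the overlapping chain, split it into odd-indexed and even-indexed bridges. There are two natural cycles:
\begin{itemize}
\item[] $C_{\mathrm{odd}}$, which uses $B_1,B_3,\dots$ plus alternate segments of $P$;
\item[] $C_{\mathrm{even}}$, which uses $B_2,B_4,\dots$ plus the complementary segments.
\end{itemize}
Together they cover every edge of $P$ at least once. The endpoint issue (the cycle must pass through $a$ and $b$) is handled by adding the segments near $a$ and $b$ via the $Q_i$ that start at $a$ and end at $b$. Here I use the third path: two of the $Q_i$ give the initial and final jumps, and the remaining one closes the cycle. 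This gives a cycle through $a,b$ covering at least a third of $E(P)$ in each "window" between consecutive jump ends. After accounting for the losses at the ends and for parity, averaging would give $\max(|E(C_{\mathrm{odd}})\cap E(P)|,|E(C_{\mathrm{even}})\cap E(P)|)\ge |P|/6$, with the constant $6$ absorbing a factor $2$ from averaging and $3$ from the end corrections.

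\textbf{Main obstacle.} The delicate step is making the chain bridges pairwise internally disjoint. Bridges are subpaths of different $Q_i$, but two bridges from the same $Q_i$ are disjoint, while bridges from different $Q_i$ are disjoint only internally, away from $a,b$. I must also ensure that the odd and even cycles are genuine cycles containing both $a$ and $b$. In a multigraph, loops and parallel edges should be harmless, since short cycles are allowed. I would first try choosing the $Q_i$ to minimize $|E(Q_1\cup Q_2\cup Q_3)\setminus E(P)|$, so that bridges are well-behaved. If the chain approach gets tangled, I would fall back to induction on $|P|$ by contracting an edge of $P$ not on any $Q_i$, which preserves the three paths.
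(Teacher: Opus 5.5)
\textbf{There is a genuine gap, and it is structural rather than a matter of bookkeeping.}

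Your chain $B_1,\dots,B_r$ and the covering claim of Step~2 use only one fact: no interior vertex $v$ of $P$ separates $P[a,v)$ from $P(v,b]$. That fact already holds when $H$ has just two internally disjoint $(a,b)$-paths, and there no constant fraction is possible. For example, join $a p_1\cdots p_t b$ and $a q_1\cdots q_t b$ by paths $E_i$ of length $L$ from $p_i$ to $q_i$, and let $P$ run along $ap_1,E_1,q_1q_2,E_2,p_2p_3,E_3,\dots$. This graph is outerplanar with a path as weak dual, so every cycle contains at most two of the $E_i$. Hence every cycle has $O(L+t)$ edges of $P$, while $|P|\geq tL$.

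Concretely, the claim that $C_{\mathrm{odd}}$ and $C_{\mathrm{even}}$ together cover every edge of $P$ is false whenever $r\ge 2$. The natural odd zig-zag $B_1\cup P[y_1,x_3]\cup B_3\cup\cdots$ and even zig-zag $P[a,x_2]\cup B_2\cup P[y_2,x_4]\cup\cdots$ are $(a,b)$-paths, not cycles. They use only the segments $P[a,x_2]$ and $P[y_j,x_{j+2}]$, so every overlap segment $P[x_{j+1},y_j]$ is missed. In the example above the overlaps are exactly the $E_i$, i.e.\ almost all of $P$.

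Nor can the third path simply ``close the cycle''. Your bridges are pieces of all three $Q_i$, and each $Q_i$ may meet $P$ many times, so in general no $Q_k$ is internally disjoint from a zig-zag. For the same reason, $P\cup Q_i$ need not be a cycle in your small-$|P|$ case, which you also never return to.

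The fallback induction fails too. Contracting an edge of $P$ whose ends lie on two different $Q_i$ destroys internal disjointness. Even when contraction is allowed, each step loses an edge, so you only get $(|P|-1)/6$.

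\textbf{The missing idea.} You need a way to make the closing path genuinely disjoint from everything you route through. The extremal choice you mention at the end is exactly the right tool, but it must be paired with the reverse decomposition. Choose $Q_1,Q_2,Q_3$ minimizing $|(E(Q_1)\cup E(Q_2)\cup E(Q_3))\setminus E(P)|$. Then, instead of pieces of the $Q_i$ jumping over $P$, look at the maximal subpaths $L$ of $P$ that use no edge of any $Q_i$ and have no interior vertex on any $Q_i$. Every edge of $P$ lies on some $Q_i$ or on some such $L$.

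Rerouting plus minimality gives two structural facts:
\begin{itemize}
\item No $L$ has both ends on the same $Q_i$; otherwise replace $Q_i[x,y]$ by $L$.
\item For each pair $\{i,j\}$, the segments $L$ joining $Q_i$ and $Q_j$ are non-crossing; otherwise swap the tails of $Q_i$ and $Q_j$ through two crossing segments.
\end{itemize}
Index these segments in order. The odd-indexed ones are pairwise disjoint, and so are the even-indexed ones. For each parity, the two zig-zag $(a,b)$-paths alternating between $Q_i$ and $Q_j$ along them cover each edge of $Q_i\cup Q_j$ once and each segment of that parity twice. All four such paths meet $Q_k$ only at $a$ and $b$, so $Q_k$ closes each into a cycle through $a$ and $b$. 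Over the three pairs this gives twelve candidate paths covering every edge of $P$ at least twice, and averaging yields $|P|/6$.
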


\begin{proof}
Fix $P$, and choose three internally disjoint $(a,b)$-paths
$P_1,P_2,P_3$ minimizing the size of the edge set  
$$F=F(P_1,P_2,P_3):=\bigl(E(P_1)\cup E(P_2)\cup E(P_3)\bigr)\setminus E(P).$$
Let $<_i$ denote the vertex ordering on $P_i$ from $a$ to $b$. 
Let $\mathcal{L}$ be the collection of maximal nontrivial subpaths $L$
of $P$ satisfying both of the following conditions: 
\begin{itemize}
    \item[(1)] $E(L)\subseteq E(P)\setminus
\bigl(E(P_1)\cup E(P_2)\cup E(P_3)\bigr)$ for every $L\in \mathcal{L}$;
    \item[(2)] all internal
vertices of $L$ lie outside $\bigcup_{i\in[3]}V(P_i)$.\footnote{In Figure~1(c), condition~(2) ensures that the three paths $L_2,L_3,L_4$ belong to $\mathcal L$, rather than the single path $L_2\cup L_3\cup L_4$.}
\end{itemize}
Thus, every path $L\in\mathcal L$ has both endpoints in $\bigcup_{i\in[3]}V(P_i)$, and the paths in $\mathcal L$ are pairwise internally disjoint. However, the paths in $\mathcal L$ may have common endpoints; see the red paths in Figure~\ref{fig:amplifier}(c) for an example.

We first claim that no path $L\in\mathcal L$ has both endpoints on the same
path among $P_1,P_2,P_3$. Suppose for contradiction that some $L\in\mathcal L$ has both endpoints $x<_{1}y$ on $P_1$. In Figure~\ref{fig:amplifier}(a), the black curves are $P_1,P_2,P_3$, the red curve is $L$, and the right-hand diagram shows three internally disjoint paths $P_1',P_2,P_3$, where $P_1'$ is obtained from $P_1$ by replacing $P_1[x,y]$ with $L$. 
This replacement adds no edge to $F$, and removes
every edge of the non-empty set $E(P_1[x,y])\setminus E(P)$ from $F$.
Indeed, if $P_1[x,y]$ contained only edges of $P$, then it would be the
unique $(x,y)$-subpath of $P$ and would coincide with $L$, a contradiction to the definition of $L$. Hence, the replacement strictly
decreases $|F|$, a contradiction to the minimality of $|F|$. This proves the claim.

We next claim that, for each $\{i,j\}\subseteq[3]$, the paths in
$\mathcal L$ connecting $P_i$ and $P_j$ can be indexed so that their endpoints on $P_i$ and $P_j$
occur in nondecreasing order in $<_i$ and $<_j$ (with repetitions allowed), respectively.
It suffices to consider $P_1$ and $P_2$. 
Suppose for contradiction that there are
$L_1,L_2\in \mathcal L$ such that $L_s$ has endpoints
$x_s\in V(P_1)$ and $y_s\in V(P_2)$ for $s\in[2]$, with
$x_1<_{1}x_2$ and $y_2<_{2}y_1$. As shown in
Figure~\ref{fig:amplifier}(b), replacing $P_1,P_2$
by
\[
 P_1':=P_1[a,x_1]\cup L_1\cup P_2[y_1,b]
 \quad\text{and}\quad
 P_2':=P_2[a,y_2]\cup L_2\cup P_1[x_2,b]
\]
again gives three internally disjoint $(a,b)$-paths. At least one of
$P_1[x_1,x_2]$ and $P_2[y_2,y_1]$ contains an edge outside $P$;
otherwise
$P_1[x_1,x_2]\cup L_2\cup P_2[y_2,y_1]\cup L_1$ would be a cycle
whose edges all lie in $P$, a contradiction. Thus this replacement also strictly
decreases $|F|$, a contradiction.
This proves the claim.

Fix $\{i,j\}\subseteq[3]$. By the preceding two claims, the paths in
$\mathcal L$ connecting $P_i$ and $P_j$ can be indexed as
$L_1,\ldots,L_r$ so that their endpoints occur in nondecreasing index
order on both $P_i$ and $P_j$. See Figure~\ref{fig:amplifier}(c), where the paths $L_s$ are colored red. Each vertex is an endpoint of at most
two paths in $\mathcal L$, and two paths with a common endpoint must have consecutive indices. Hence the odd-indexed paths are pairwise
disjoint, and so are the even-indexed paths.
For each parity, alternating between $P_i$ and $P_j$ along the paths with indices of this given parity
gives the two zig-zag $(a,b)$-paths, as shown in the right figure in Figure~\ref{fig:amplifier}(c). Together
these four paths cover
every edge of $P_i\cup P_j$ and every path in 
$\mathcal L$ that connects $P_i$ and $P_j$ exactly twice.

Applying this for the three pairs $\{1,2\},\{1,3\},\{2,3\}$ gives twelve
$(a,b)$-paths. Every
edge in $\bigcup_iE(P_i)$ is counted four times, and every edge in
$E(\mathcal L)$ twice. Since
$E(P)\subseteq\bigcup_{i=1}^3E(P_i)\cup E(\mathcal L)$, one of the
twelve paths, say $Q$, contains at least $2|P|/12=|P|/6$ edges of $P$. This path $Q$ uses
only two of the paths $P_1,P_2,P_3$, say $P_1$ and $P_2$, and paths in $\mathcal L$ connecting
them. Then $Q$ and $P_3$ are
internally disjoint, so $Q\cup P_3$ gives the required cycle $C$. This proves Theorem~\ref{lem:amplifier}.
\end{proof}

\begin{figure}[htb]
    \centering
    
\tikzset{every picture/.style={line width=0.75pt}} 
\tikzset{dot/.style={fill=black,circle,inner sep=1.0pt}} 

\begin{tikzpicture}[
    dot/.style = {circle, fill, inner sep=1.2pt},
    p_path/.style = {black, line width=1.2pt},
    p_dash/.style = {black, thick, dashed, opacity=0.4},
    l_path/.style = {red, line width=1.5pt},
    l_dash/.style = {red, thick, dashed, opacity=0.5},
    samples=100,
    scale=0.8,
    declare function={
        P(\x) = 1.2 * sqrt(max(0, 1 - (\x/3)^2));
    }
]

\newcommand{\myarrow}[2]{
    \begin{scope}[shift={(#1, #2)}]
        \draw[black, line width=1pt] (0, 0.05) -- (0.55, 0.05);
        \draw[black, line width=1pt] (0, -0.05) -- (0.55, -0.05);
        
        \draw[black, line width=1pt, line cap=round, line join=round]
            (0.45, 0.16) .. controls (0.56, 0.05) .. (0.68, 0)
                         .. controls (0.56, -0.05) .. (0.45, -0.16);
    \end{scope}
}

\begin{scope}
    \node[anchor=north west, font=\bfseries,xshift=-5mm] at (-3.5, 1.8) {(a)};

    \newcommand{\defineClaimOnePoints}{
        \coordinate (Qx) at (-1.2, {P(-1.2)});
        \coordinate (Qy) at (1.2, {P(1.2)});
    }

    \begin{scope}
        \defineClaimOnePoints
        \node[dot, label=left:$a$] at (-3, 0) {};
        \node[dot, label=right:$b$] at (3, 0) {};
        
        \draw[p_path] plot[domain=-3:3] (\x, {P(\x)});
        \draw[p_path] (-3,0) -- (3,0);
        \draw[p_path] plot[domain=-3:3] (\x, {-P(\x)});
        
        \node[above=1pt] at (-1.8, {P(-1.8)}) {$P_1$};
        \node[above=1pt] at (-1.8, 0) {$P_2$};
        \node[below=1pt] at (-1.8, {-P(-1.8)}) {$P_3$};
        
        \draw[l_path] (Qx) to[bend right=40] node[pos=0.5, anchor=south, inner sep=3pt, black, yshift=-1mm] {$L$} (Qy);
        
        \node[dot] at (Qx) {}; \node[above, inner sep=3pt] at (Qx) {$x$};
        \node[dot] at (Qy) {}; \node[above, inner sep=3pt] at (Qy) {$y$};
    \end{scope}

    \myarrow{3.5}{0}

    \begin{scope}[shift={(7.8, 0)}]
        \defineClaimOnePoints
        \node[dot, label=left:$a$] at (-3, 0) {};
        \node[dot, label=right:$b$] at (3, 0) {};
        
        \draw[p_path] (-3,0) -- (3,0);
        \draw[p_path] plot[domain=-3:3] (\x, {-P(\x)});
        
        \draw[p_dash] plot[domain=-1.2:1.2] (\x, {P(\x)}); 
        
        \draw[p_path] plot[domain=-3:-1.2] (\x, {P(\x)});
        \draw[l_path] (Qx) to[bend right=40] node[pos=0.5, anchor=south, inner sep=3pt, black,yshift=-1mm] {$L$} (Qy);
        \draw[p_path] plot[domain=1.2:3] (\x, {P(\x)});
        
        \node[above=1pt] at (1.8, {P(1.8)}) {$P_1'$};
        \node[above=1pt] at (-1.8, 0) {$P_2$};
        \node[below=1pt] at (-1.8, {-P(-1.8)}) {$P_3$};
        
        \node[dot] at (Qx) {}; \node[above, inner sep=3pt] at (Qx) {$x$};
        \node[dot] at (Qy) {}; \node[above, inner sep=3pt] at (Qy) {$y$};
    \end{scope}
\end{scope}

\begin{scope}[shift={(0, -3.5)}]
    \node[anchor=north west, font=\bfseries,xshift=-5mm] at (-3.5, 1.8) {(b)};

    \newcommand{\defineClaimTwoPoints}{
        \coordinate (x1) at (-0.8, {P(-0.8)});
        \coordinate (y1) at (0.8, 0);
        \coordinate (x2) at (0.8, {P(0.8)});
        \coordinate (y2) at (-0.8, 0);
    }

    \begin{scope}
        \defineClaimTwoPoints
        \node[dot, label=left:$a$] at (-3, 0) {};
        \node[dot, label=right:$b$] at (3, 0) {};
        
        \draw[p_path] plot[domain=-3:3] (\x, {P(\x)});
        \draw[p_path] (-3,0) -- (3,0);
        \draw[p_path] plot[domain=-3:3] (\x, {-P(\x)});
        
        \node[above=1pt] at (-1.8, {P(-1.8)}) {$P_1$};
        \node[above=1pt] at (-1.8, 0) {$P_2$};
        \node[below=1pt] at (-1.8, {-P(-1.8)}) {$P_3$};
        
        \draw[l_path] (x1) to[bend left=10] node[pos=0.3, anchor=east, font=\small, black, inner sep=1.5pt, yshift=-0.4mm] {$L_1$} (y1);
        \draw[l_path] (x2) to[bend right=10] node[pos=0.3, anchor=west, font=\small, black, inner sep=1.5pt, yshift=-0.4mm] {$L_2$} (y2);
        
        \foreach \p in {x1, y1, x2, y2} \node[dot] at (\p) {};
        \node[above left, inner sep=2pt] at (x1) {$x_1$};
        \node[below right, inner sep=2pt] at (y1) {$y_1$};
        \node[above right, inner sep=2pt] at (x2) {$x_2$};
        \node[below left, inner sep=2pt] at (y2) {$y_2$};
    \end{scope}

    \myarrow{3.5}{0}

    \begin{scope}[shift={(7.8, 0)}]
        \defineClaimTwoPoints
        \node[dot, label=left:$a$] at (-3, 0) {};
        \node[dot, label=right:$b$] at (3, 0) {};
        
        \draw[p_path] plot[domain=-3:3] (\x, {-P(\x)});
        \node[below=1pt] at (-1.8, {-P(-1.8)}) {$P_3$};
        
        \draw[p_dash] plot[domain=-0.8:0.8] (\x, {P(\x)}); 
        \draw[p_dash] (-0.8, 0) -- (0.8, 0);               

        \draw[p_path] plot[domain=-3:-0.8] (\x, {P(\x)}); 
        \draw[l_path] (x1) to[bend left=10] node[pos=0.3, anchor=east, font=\small, black, inner sep=1.5pt, yshift=-0.4mm] {$L_1$} (y1);
        \draw[l_path] (x2) to[bend right=10] node[pos=0.3, anchor=west, font=\small, black, inner sep=1.5pt, yshift=-0.4mm, xshift=0.16mm] {$L_2$} (y2);
        
        \draw[p_path] (0.8, 0) -- (3, 0);                 
        \node[above=1pt] at (1.8, 0) {$P_1'$};
        
        \draw[p_path] (-3, 0) -- (-0.8, 0);               
        \draw[p_path] plot[domain=0.8:3] (\x, {P(\x)});   
        \node[above=1pt] at (1.8, {P(1.8)}) {$P_2'$};
        
        \foreach \p in {x1, y1, x2, y2} \node[dot] at (\p) {};
        \node[above left, inner sep=2pt] at (x1) {$x_1$};
        \node[below right, inner sep=2pt] at (y1) {$y_1$};
        \node[above right, inner sep=2pt] at (x2) {$x_2$};
        \node[below left, inner sep=2pt] at (y2) {$y_2$};
    \end{scope}
\end{scope}

\begin{scope}[shift={(0, -7.8)}]
    \node[anchor=north west, font=\bfseries,xshift=-5mm] at (-3.5, 1.8) {(c)};

    \newcommand{\defineZigZagPoints}{
        \coordinate (T1) at (-2.6, {P(-2.6)});
        \coordinate (B1) at (-2.2, {-P(-2.2)});
        
        \coordinate (B2) at (-1.8, {-P(-1.8)});
        \coordinate (T2) at (-1.4, {P(-1.4)});
        
        \coordinate (B3) at (-0.8, {-P(-0.8)});
        \coordinate (T3) at (-0.2, {P(-0.2)});
        
        \coordinate (T4) at (0.3,  {P(0.3)});
        \coordinate (B4) at (0.8,  {-P(0.8)});
        
        \coordinate (T5) at (1.5,  {P(1.5)});
        \coordinate (B5) at (2.2,  {-P(2.2)});
    }

    \begin{scope}
        \defineZigZagPoints
        \node[dot, label=left:$a$] at (-3, 0) {};
        \node[dot, label=right:$b$] at (3, 0) {};
        
        \draw[p_path] plot[domain=-3:3] (\x, {P(\x)});
        \draw[p_path] plot[domain=-3:3] (\x, {-P(\x)});
        
        \node[above=2pt] at (0, {P(0)}) {$P_1$};
        \node[below=2pt] at (0, {-P(0)}) {$P_2$};
        
        \draw[l_path] (T1) -- node[pos=0.4, anchor=east, font=\small, inner sep=1.5pt, black, xshift=0.8mm]{$L_1$} (B1); 
        \draw[l_path] (B2) -- node[pos=0.6, anchor=east, font=\small, inner sep=1.5pt, black]{$L_2$} (T2); 
        \draw[l_path] (T2) -- node[pos=0.6, anchor=east, font=\small, inner sep=1.5pt, black,xshift=0.3mm]{$L_3$} (B3); 
        \draw[l_path] (B3) -- node[pos=0.3, anchor=west, font=\small, inner sep=1.5pt, black]{$L_4$} (T3); 
        \draw[l_path] (T4) -- node[pos=0.7, anchor=east, font=\small, inner sep=1.5pt, black]{$L_5$} (B4); 
        \draw[l_path] (B4) -- node[pos=0.4, anchor=west, font=\small, inner sep=1.5pt, black]{$L_6$} (T5); 
        \draw[l_path] (T5) -- node[pos=0.5, anchor=west, font=\small, inner sep=1.5pt, black]{$L_7$} (B5);
        
        \foreach \p in {T1, B1, B2, T2, B3, T3, T4, B4, T5, B5} \node[dot] at (\p) {};
    \end{scope}

    \myarrow{3.5}{0}

    \begin{scope}[shift={(7.8, 0)}]
        
        \begin{scope}[shift={(0, 1.25)}] 
            \defineZigZagPoints
            \node[dot, label=left:$a$] at (-3, 0) {};
            \node[dot, label=right:$b$] at (3, 0) {};
            
            \draw[p_dash] plot[domain=-2.6:-1.4] (\x, {P(\x)}); 
            \draw[p_dash] plot[domain=0.3:1.5] (\x, {P(\x)});   
            \draw[p_dash] plot[domain=-3:-2.2] (\x, {-P(\x)}); 
            \draw[p_dash] plot[domain=-0.8:0.8] (\x, {-P(\x)}); 
            \draw[p_dash] plot[domain=2.2:3] (\x, {-P(\x)});   
            
            \draw[l_dash] (B2) -- (T2); 
            \draw[l_dash] (B3) -- (T3); 
            \draw[l_dash] (B4) -- (T5); 

            \draw[p_path] plot[domain=-3:-2.6] (\x, {P(\x)}); 
            \draw[l_path] (T1) -- node[pos=0.4, anchor=east, font=\small, inner sep=1.5pt, black, xshift=0.8mm]{$L_1$} (B1);                       
            \draw[p_path] plot[domain=-2.2:-0.8] (\x, {-P(\x)});
            \draw[l_path] (B3) -- node[pos=0.4, anchor=east, font=\small, inner sep=1.5pt, black]{$L_3$} (T2);                    
            \draw[p_path] plot[domain=-1.4:0.3] (\x, {P(\x)});  
            \draw[l_path] (T4) -- node[pos=0.7, anchor=east, font=\small, inner sep=1.5pt, black]{$L_5$} (B4);                       
            \draw[p_path] plot[domain=0.8:2.2] (\x, {-P(\x)});  
            \draw[l_path] (B5) -- node[pos=0.5, anchor=west, font=\small, inner sep=1.5pt, black]{$L_7$} (T5);                     
            \draw[p_path] plot[domain=1.5:3] (\x, {P(\x)});     
            
            \foreach \p in {T1, B1, B3, T2, T4, B4, B5, T5} \node[dot] at (\p) {};
        \end{scope}

        \begin{scope}[shift={(0, -1.25)}] 
            \defineZigZagPoints
            \node[dot, label=left:$a$] at (-3, 0) {};
            \node[dot, label=right:$b$] at (3, 0) {};
            
            \draw[p_dash] plot[domain=-3:-2.6] (\x, {P(\x)});   
            \draw[p_dash] plot[domain=-1.4:0.3] (\x, {P(\x)});  
            \draw[p_dash] plot[domain=1.5:3] (\x, {P(\x)});     
            \draw[p_dash] plot[domain=-2.2:-0.8] (\x, {-P(\x)}); 
            \draw[p_dash] plot[domain=0.8:2.2] (\x, {-P(\x)});  
            
            \draw[l_dash] (B2) -- (T2); 
            \draw[l_dash] (B3) -- (T3); 
            \draw[l_dash] (B4) -- (T5); 

            \draw[p_path] plot[domain=-3:-2.2] (\x, {-P(\x)}); 
            \draw[l_path] (B1) -- node[pos=0.6, anchor=east, font=\small, inner sep=1.5pt, black, xshift=0.8mm]{$L_1$} (T1);                     
            \draw[p_path] plot[domain=-2.6:-1.4] (\x, {P(\x)}); 
            \draw[l_path] (T2) -- node[pos=0.6, anchor=east, font=\small, inner sep=1.5pt, black]{$L_3$} (B3);                       
            \draw[p_path] plot[domain=-0.8:0.8] (\x, {-P(\x)}); 
            \draw[l_path] (B4) -- node[pos=0.3, anchor=east, font=\small, inner sep=1.5pt, black]{$L_5$} (T4);                      
            \draw[p_path] plot[domain=0.3:1.5] (\x, {P(\x)});   
            \draw[l_path] (T5) -- node[pos=0.5, anchor=west, font=\small, inner sep=1.5pt, black]{$L_7$} (B5);                       
            \draw[p_path] plot[domain=2.2:3] (\x, {-P(\x)});     
            
            \foreach \p in {T1, B1, T2, B3, T4, B4, T5, B5} \node[dot] at (\p) {};
        \end{scope}
    \end{scope}

\end{scope}

\end{tikzpicture}
    \caption{Two path-switching operations and the zig-zag construction in the proof of Theorem~\ref{lem:amplifier}.}
    \label{fig:amplifier}
\end{figure}

Using Theorem~\ref{lem:amplifier}, we derive the following lemma, which serves as a key tool in Section~\ref{sec:4} for identifying amplifiers.

\begin{lemma}\label{lem:local-amplifier}
Let $P$ be an $(a,b)$-path in a graph $H$, and let
$a<_P c<_P d<_P b$. Suppose that $P[a,c]$ and $P[d,b]$ each contain at
least three vertices and that, for every $v\in V(P[c,d])$, the graph
$H-v$ contains two disjoint $(P[a,v),P(v,b])$-paths. Then $P[c,d]$ is
an $(H,1/6)$-amplifier.
\end{lemma}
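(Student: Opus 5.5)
The plan is to reduce to Theorem~\ref{lem:amplifier} by contracting the two end segments of $P$. Let $H'$ be the multigraph obtained from $H$ by contracting $P[a,c]$ into a single vertex $A$ and $P[d,b]$ into a single vertex $B$. We keep all resulting parallel edges and loops. Each edge of $H'$ remembers the edge of $H$ it came from, so $E(H')$ is identified with a subset of $E(H)$. Since $c<_P d$, the two contracted segments are disjoint and $A\neq B$. The path $P[c,d]$ becomes an $(A,B)$-path $P'$ in $H'$ with $E(P')=E(P[c,d])$.

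The \emph{lifting step} converts cycles of $H'$ back to cycles of $H$. Let $C'$ be a cycle of $H'$ through $A$ and $B$. At $A$ it uses two edges, whose preimages end at vertices $x,x'\in V(P[a,c])$. Replace $A$ by the subpath $P[x,x']$, and do the same at $B$ with a subpath of $P[d,b]$. This yields a cycle $C$ of $H$ with $E(C)\supseteq E(C')$. In particular $|E(C)\cap E(P[c,d])|\ge |E(C')\cap E(P')|$. A degenerate case arises when $x=x'$, or when $C'$ is a loop or digon; these are handled by noting that $P[a,c]$ and $P[d,b]$ have at least three vertices, which leaves room to reroute. I expect only minor bookkeeping here. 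Hence it suffices to find three internally disjoint $(A,B)$-paths in $H'$, and then apply Theorem~\ref{lem:amplifier} to $P'$ to obtain the $1/6$ fraction.

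The main step, and the one I expect to need the most care, is verifying via Menger's theorem that no set $S$ of at most two vertices of $V(H')\setminus\{A,B\}$ separates $A$ from $B$. The path $P'$ is an $(A,B)$-path whose internal vertices are exactly those of $P(c,d)$. So $S$ must contain some $u\in V(P(c,d))$; write $S=\{u,w\}$, allowing $w=u$. Apply the hypothesis at $v=u$: the graph $H-u$ contains two disjoint $(P[a,u),P(u,b])$-paths. At most one of them meets $w$, so one of them, $Q_1$, avoids $S$. It runs from some $x\in P[a,u)$ to some $y\in P(u,b]$.

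We then route from $A$ to $x$ along $P$ and from $y$ to $B$ along $P$. If $w\notin V(P(c,d))$, or if $w$ does not lie between $u$ and $y$, this gives an $(A,B)$-walk in $H'$ avoiding $S$, which is a contradiction. (By symmetry, we may assume $u<_P w$ whenever $w$ also lies on $P(c,d)$.) Otherwise $u<_P y\le_P w$. Note that $y\neq w$, since $Q_1$ avoids $w$, so in fact $u<_P y<_P w$. Now apply the hypothesis at $v=w$. This gives a path $Q_2$ avoiding $\{u,w\}$ from some $x'\in P[a,w)$ to some $y'\in P(w,b]$. If $x'<_P u$, then $Q_2$ together with the two end segments of $P$ is an $(A,B)$-walk avoiding $S$. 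Otherwise $x'\in P(u,w)$. In that case the walk
\[
A\to x \xrightarrow{Q_1} y \xrightarrow{P} x' \xrightarrow{Q_2} y' \to B
\]
avoids $u$ and $w$. Here the segment between $y$ and $x'$ stays inside $P(u,w)$. The paths $Q_1$ and $Q_2$ may intersect, but a walk avoiding $S$ still contains an $(A,B)$-path avoiding $S$. In every case $S$ fails to separate $A$ from $B$. By Menger's theorem we obtain three internally disjoint $(A,B)$-paths, which completes the plan.
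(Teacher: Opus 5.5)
Your route is the same as the paper's: contract $P[a,c]$ and $P[d,b]$, get three internally disjoint paths from Menger plus the hypothesis at the vertices of a small separator, apply Theorem~\ref{lem:amplifier}, and lift. Your separator case analysis is also essentially right. (The phrase ``if $w$ does not lie between $u$ and $y$'' should read ``if $w$ lies between $u$ and $y$''; the case you then treat, $u<_P y<_P w$, is the correct remaining one.)

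The gap is the final appeal to Menger. You use the statement that if no set of at most two vertices of $V(H')\setminus\{A,B\}$ separates $A$ from $B$, then there are three internally disjoint $(A,B)$-paths. This holds only when $A$ and $B$ are non-adjacent. In $H'$ they are adjacent whenever $H$ has an edge between $P[a,c]$ and $P[d,b]$. Then your separator condition holds vacuously and gives no information. The extreme case is $|P[c,d]|=1$. Then $P'$ is the single edge $cd$ and $P(c,d)=\emptyset$, so the sentence ``$S$ must contain some $u\in V(P(c,d))$'' fails and your argument never uses the hypothesis at all. It would equally ``produce'' three internally disjoint paths when $H=P$, where no such paths exist. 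What is missing there is the hypothesis at $c$ (or $d$), which your analysis never invokes.

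The paper avoids this by applying the set form of Menger in $H$ to $V(P[a,c])$ and $V(P[d,b])$. A separator $S$ with $|S|\le 2$ may now contain vertices of these sets, so every edge between them is blocked by one of its ends. Hence $S$ must meet $V(P[c,d])$, possibly at $c$ or $d$. Your argument then goes through with $u$ chosen as the $<_P$-first vertex of $S\cap V(P[c,d])$. The resulting three vertex-disjoint paths contract to three internally disjoint $(A,B)$-paths in $H'$, with direct edges becoming parallel edges.

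This is also where the hypothesis that $P[a,c]$ and $P[d,b]$ have at least three vertices matters: otherwise one of them is itself a separator of size at most two. That hypothesis is not what makes your lifting step work. The lifting's degenerate cases are harmless anyway, since $C'$ passes through $A\neq B$ and its edges are distinct edges of the simple graph $H$.

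Alternatively, you could delete the $A$--$B$ edges of $H'$ and count them separately. Your argument then adapts when $P(c,d)\neq\emptyset$, but the case $P(c,d)=\emptyset$ still requires the hypothesis at $c$.
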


\begin{proof}
Let $A:=P[a,c]$ and $B:=P[d,b]$. We first claim that $H$ contains
three disjoint $(A,B)$-paths. If not, Menger's theorem gives a set
$S$ of at most two vertices separating $A$ from $B$. Since $P[c,d]$
is an $(A,B)$-path, the set $S$ meets $P[c,d]$.
Suppose first that $S\cap V(P[c,d])=\{x\}$. The hypothesis at $x$ gives
two disjoint $(P[a,x),P(x,b])$-paths in $H-x$, one of which avoids the
possible second vertex of $S$. The union of this path with
$P[c,x)\cup P(x,d]$ contains an $(A,B)$-path in $H-S$, a contradiction.
We may therefore write $S=\{x,y\}\subseteq V(P[c,d])$, where
$x<_P y$. Among the two disjoint $(P[a,x),P(x,b])$-paths provided by the hypothesis at $x$, one avoids $y$.
We denote this path by $L_x$. If its endpoint on $P(x,b]$ lies in
$P(y,b]$, then the union
$L_x\cup P[c,x)\cup P(y,b]$ contains an $(A,B)$-path in $H-S$, a
contradiction. Hence $L_x$ connects the subpaths $P[a,x)$ and
$P(x,y)$. Symmetrically, $H-S$ contains a path $L_y$ connecting $P(x,y)$ to
$P(y,b]$. Then the union
$P[a,x)\cup L_x\cup P(x,y)\cup L_y\cup P(y,b]$ contains an
$(A,B)$-path in $H-S$, a contradiction.
This proves the claim.

By the claim, there are three disjoint $(A,B)$-paths in $H$. Let $H'$
be the multigraph obtained from $H$ by contracting $A$ and $B$ to
vertices $a'$ and $b'$, respectively, and deleting the resulting loops.
The images of the three disjoint $(A,B)$-paths are internally disjoint $(a',b')$-paths in $H'$,
and the image $P'$ of $P$ is an $(a',b')$-path of length
$|P[c,d]|$. By Theorem~\ref{lem:amplifier}, there is a cycle $C'$ through
$a',b'$ with $|E(C')\cap E(P')|\geq |P[c,d]|/6$.
Replace $a'$ in $C'$ with the (possibly trivial) subpath of $A$ joining the original endpoints in $A$ of the two edges incident with $a'$. Apply the analogous replacement at $b'$ using a subpath of $B$. 
Since all vertices outside $P$ are preserved in this contraction, these replacements give a cycle $C\subseteq H$ with
$|E(C)\cap E(P[c,d])|=|E(C')\cap E(P')|\geq |P[c,d]|/6$.
Thus $P[c,d]$ is an
$(H,1/6)$-amplifier.
\end{proof}

\section{A bounded structural decomposition}\label{sec:3}
Throughout this section, let $X$ and $Y$ be two longest cycles in a graph
$G$ with $m:=|V(X)\cap V(Y)|\geq2$. Let $G_X:=G-X$, and let $\mathcal Y$ be the collection of nontrivial maximal paths of $Y-X$. 
Note that $|\mathcal Y|\leq m$ and the paths in $\mathcal Y$ are pairwise disjoint.

The goal of this section is to prove a decomposition result that partitions the edges of $Y-X$ into $O(m)$ paths, each contained in one of $O(m)$ induced subgraphs of $G_X$ that behave like $2$-connected graphs. 
The formal statement is given below.

\begin{thm}\label{thm:del1cut}
There exists a collection $\mathcal P$ of edge-disjoint nontrivial
subpaths of $Y-X$, a collection $\mathcal H$ of pairwise edge-disjoint
connected induced subgraphs of $G_X$, and a set
$\mathcal S\subseteq V(G_X)$ with the following properties.
\begin{itemize}
    \item[(1)] $|\mathcal P|\leq5m$, $|\mathcal H|\leq4m$, and
    $|\mathcal S|\leq4m$.
    \item[(2)] $E(\mathcal P)=E(Y-X)$, and every
    $P\in\mathcal P$ is contained in exactly one $H\in\mathcal H$.
    \item[(3)] Every $H\in \mathcal{H}$ contains at least one $P\in \mathcal{P}$, and the members of $\mathcal P$ contained in $H$ are pairwise vertex-disjoint. Moreover,
    for any distinct $H,H'\in\mathcal H$, the set $\mathcal S$
    separates $H$ and $H'$ in $G_X$.
    \item[(4)] Every $H\in\mathcal H$ is a union of blocks of $G_X$. Moreover, if $H\in\mathcal H$ contains at least two paths $P\in\mathcal P$, then $H$ is a block in $G_X$ and $|V(H)|\geq3$.
    \item[(5)] If $H\in\mathcal H$ contains at least two paths $P\in \mathcal{P}$, and $x_1,x_2\in V(X)$ are the endpoints on $X$ of two disjoint $(X,H)$-paths in $G$, then
    $\operatorname{dist}_X(x_1,x_2)\geq c(H)/2$.
\end{itemize}
\end{thm}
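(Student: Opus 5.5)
The plan is to work inside each component $K$ of $G_X$ using its block--cut tree $T_K$, and to read off $\mathcal H$, $\mathcal P$ and $\mathcal S$ from how the paths of $\mathcal Y$ travel through $T_K$. Each $Q\in\mathcal Y$ lying in $K$ traverses a sequence $B_1,c_1,B_2,\dots,c_{t-1},B_t$ of blocks and cut vertices, and uses at least one edge in each $B_i$. This sequence is a path $\tau(Q)$ in $T_K$, which I call the \emph{trace} of $Q$. The key observation concerns two distinct $Q,Q'\in\mathcal Y$, which are vertex-disjoint. Their traces cannot share a cut vertex, since both paths would have to pass through that vertex. Hence, since any two blocks of $\tau(Q)$ are joined in $T_K$ through a cut vertex of $\tau(Q)$, the traces $\tau(Q),\tau(Q')$ share at most one block. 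Call a block \emph{heavy} if it contains edges of at least two members of $\mathcal Y$.

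\textbf{Construction.} First, every heavy block becomes a member of $\mathcal H$. Next, cut each $Q\in\mathcal Y$ at every cut vertex where it enters or leaves a heavy block. This splits $Q$ into subpaths of two kinds: those inside a heavy block, and maximal stretches passing only through blocks used by $Q$ alone. For each stretch of the second kind, put into $\mathcal H$ the union of the (consecutive) blocks it passes through; this union is connected, induced, and a union of blocks. The subpaths obtained this way form $\mathcal P$, so $E(\mathcal P)=E(Y-X)$. Finally, let $\mathcal S$ be the set of cut vertices of $G_X$ at which these pieces were cut, together with the cut vertices separating distinct members of $\mathcal H$ along the traces.

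\textbf{Checking (2)--(4).} Members of $\mathcal H$ are edge-disjoint because distinct members use distinct blocks. Two members are separated by $\mathcal S$ because any path between them in $G_X$ passes through a cut vertex on the tree path joining them, which I arrange to lie in $\mathcal S$. Paths of $\mathcal P$ inside a heavy block come from distinct members of $\mathcal Y$, so they are vertex-disjoint; a non-heavy member of $\mathcal H$ contains exactly one path. A heavy block contains edges of two disjoint paths, so it cannot be a single edge; being a block, it is $2$-connected with at least $3$ vertices, which gives (4).

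\textbf{Counting (1).} This is the step I expect to be the main obstacle. The union of the traces in $K$ is a subforest of $T_K$ formed by $|\mathcal Y\cap K|$ paths that pairwise meet in at most one block node. I would show that every heavy block is either a branch node of this union (degree at least $3$) or an endpoint of some trace. Each trace has two endpoints, and a tree in which all leaves are trace endpoints has fewer branch nodes than leaves. Together these give at most about $2|\mathcal Y|\le 2m$ heavy blocks. Each heavy block a trace passes through creates at most two extra cuts in $Q$, so $|\mathcal P|\le 5m$. Then $|\mathcal H|\le 4m$ and $|\mathcal S|\le 4m$ follow by charging each non-heavy member, and each separating vertex, to a cut point of some $Q$. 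The delicate part is getting constants this small. If the branch-node argument is too loose, I would instead charge each heavy block to the pair of traces meeting there, and use that these pairwise intersections form an acyclic structure (a tree-like intersection pattern of subpaths of a tree), which gives a linear count.

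\textbf{Property (5).} Here $H$ is $2$-connected, and we fix a longest cycle $C\subseteq H$. Extend the two disjoint $(X,H)$-paths by two disjoint paths inside $H$ to $C$; this is possible by Menger's theorem since $H$ is $2$-connected, after rerouting as usual. This gives two disjoint $(x_1,C)$- and $(x_2,C)$-paths $R_1,R_2$ internally avoiding $X$, ending at $y_1,y_2\in V(C)$. The longer arc $A$ of $C$ between $y_1$ and $y_2$ has length at least $c(H)/2$. Replacing the shorter $X$-segment between $x_1$ and $x_2$ by $R_1\cup A\cup R_2$ yields a cycle of length greater than $|X|$ unless $\operatorname{dist}_X(x_1,x_2)\ge |A|\ge c(H)/2$. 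The maximality of $X$ therefore gives (5).
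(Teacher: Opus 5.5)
Your construction matches the paper's. Heavy blocks are its $\mathcal B_D$, your stretches are $\mathcal P_D^{(2)}$ with $\mathcal H$-members the block unions along $T_Q$, and your proof of (5) is the same Menger-plus-switching argument. The genuine gap is in (1), which is exactly where the stated constants $5m,4m,4m$ have to be earned. Let $\mathcal Y_K$ be the paths of $\mathcal Y$ in $K$, $\mathcal B_K$ the heavy blocks, and $e$ the number of incidences (pairs $(Q,B)$ with $B\in\mathcal B_K$ on $\tau(Q)$). Then
\[
|\mathcal P|\le|\mathcal Y_K|+2e \quad\text{and}\quad |\mathcal H|\le|\mathcal B_K|+|\mathcal Y_K|+e .
\]
Your branch-node argument bounds only $|\mathcal B_K|$. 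Even there, as written it gives fewer than $2|\mathcal Y_K|$ branch nodes plus up to $2|\mathcal Y_K|$ trace endpoints, so roughly $4|\mathcal Y_K|$, not $2|\mathcal Y_K|$. It says nothing about $e$, so ``each heavy block creates at most two cuts, hence $|\mathcal P|\le 5m$'' does not follow. Your fallback also names the wrong acyclic object. The graph on traces, with an edge whenever two traces share a heavy block, is not acyclic: three traces through one heavy block form a triangle.

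The missing idea is that the \emph{bipartite incidence graph} $\mathcal F$ between $\mathcal Y_K$ and $\mathcal B_K$ is a forest. Suppose $\mathcal F$ had a cycle $Q_1B_1Q_2B_2\cdots Q_tB_tQ_1$. Take the nontrivial subpaths $R_i\subseteq\tau(Q_i)$ from $B_{i-1}$ to $B_i$. Every edge of $T_K$ has a cut-vertex endpoint, and distinct traces share no cut vertex, so the $R_i$ are pairwise edge-disjoint. Their union would then contain a cycle of $T_K$, which is impossible. Every heavy block has degree at least $2$ in $\mathcal F$, so
\[
2|\mathcal B_K|\le e\le|\mathcal B_K|+|\mathcal Y_K|-1 .
\]
This gives $|\mathcal B_K|\le|\mathcal Y_K|-1$ and $e\le 2|\mathcal Y_K|-2$, hence $|\mathcal P|<5|\mathcal Y_K|$ and $|\mathcal H|\le 4|\mathcal Y_K|$.

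Your bound on $\mathcal S$ also needs repair. Charging separators to cut points fails when a component contains several paths of $\mathcal Y$ but no heavy block: there are no cut points, yet the members still need separating. The clean fix is the paper's rooting argument. The members of $\mathcal H$ in $K$ correspond to pairwise disjoint subtrees of $T_K$. Contract these subtrees, root the resulting tree, and for each non-root subtree take the cut vertex on its first edge toward the root. This yields at most $|\mathcal H_K|-1$ separators per component.
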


For each $H\in\mathcal H$, let
$\mathcal P_H:=\{P\in\mathcal P:P\subseteq H\}$. 
Note that by (2), $\mathcal P_H$ is non-empty.

\medskip

We begin with some basic properties of blocks that we will use later.
For a connected graph $F$, let $T(F)$ be its \textbf{block-cut tree}\footnote{The
fact that $T(F)$ is a tree is the standard block graph theorem; see,
for example, \cite[Section~3]{diestel2024graph}.} with bipartition
$(\mathcal B(F),\mathcal C(F))$, where $\mathcal B(F)$ is the collection
of blocks and $\mathcal C(F)$ is the set of cut-vertices in $F$. 
A block $B\in\mathcal B(F)$ and a cut-vertex $x\in\mathcal C(F)$ are adjacent in
$T(F)$ precisely when $x\in V(B)$.

\begin{lemma}\label{lem:block-convexity}
Let $F$ be a connected graph. Then the following statements hold.
\begin{itemize}
    \item[(1)] Every path in $F$ whose two endpoints lie in a block $B$ of $F$ is
    contained in $B$.
    \item[(2)] If $R$ is a connected subtree of $T(F)$ with
    $V(R)\cap\mathcal B(F)\neq\emptyset$, then the union of the blocks
    in $V(R)\cap\mathcal B(F)$ is a connected induced subgraph of $F$.
\end{itemize}
\end{lemma}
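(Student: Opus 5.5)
Both parts follow from the definition of blocks and from the fact that $T(F)$ is a tree. The basic tool is that blocks are maximal subgraphs without a cut-vertex, so any cycle of $F$, and more generally any 2-connected subgraph, lies entirely inside one block.

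For (1), let $P$ be a path in $F$ whose endpoints $u,v$ lie in a block $B$, and suppose $P$ leaves $B$. Then some subpath $P[x,y]$ has $x,y\in V(B)$, all internal vertices outside $B$, and at least one internal vertex. If $x=y$ this is impossible, since $P$ is a path. If $x\neq y$, then $B$ is connected and contains an $(x,y)$-path $Q$. The union $Q\cup P[x,y]$ is a cycle, or more generally $B\cup P[x,y]$ has no cut-vertex. One checks this directly: removing any single vertex leaves $B$ minus at most one vertex, which is connected because $B$ is a block, attached to the remainder of $P[x,y]$ through $x$ or $y$. This contradicts the maximality of $B$. The degenerate cases, where $B$ is a single edge or vertex, are handled the same way, since adding an ear to an edge gives a cycle. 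Hence every edge of $P$ joins vertices of $B$. Because blocks are induced subgraphs, $P\subseteq B$.

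For (2), let $R$ be a subtree of $T(F)$ containing at least one block, and let $U$ be the union of the blocks in $V(R)\cap\mathcal B(F)$. Connectivity follows from the tree structure. Any two blocks $B,B'$ of $R$ are joined in $R$ by a path $B=B_0,x_1,B_1,\dots,x_t,B_t=B'$ alternating between blocks and cut-vertices, where $x_i\in V(B_{i-1})\cap V(B_i)$. Each $B_i$ is connected, so their union is connected.

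The main obstacle is showing that $U$ is induced. Suppose $u,v\in V(U)$ with $uv\in E(F)$. The edge $uv$ lies in a unique block $B^*$. I claim $B^*$ belongs to $R$. Choose blocks $B_u\ni u$ and $B_v\ni v$ in $R$. If $u\in V(B^*)\cap V(B_u)$ with $B^*\neq B_u$, then $u$ is a cut-vertex adjacent to both in $T(F)$. If $B^*\notin V(R)$, I would argue that the tree path in $T(F)$ from $B_u$ through $u$ to $B^*$, then through $v$ to $B_v$, together with the path inside $R$ from $B_v$ to $B_u$, forms a closed walk. Since $T(F)$ is a tree, this closed walk must backtrack, which forces $B^*$ onto the $R$-path and hence into $R$.

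A cleaner way to make this precise is to contrapose using (1). If $B^*\notin R$, then the $R$-path from $B_u$ to $B_v$ gives a $(u,v)$-path inside $U$ through the cut-vertices of that path. Since $u\neq v$ and $R$ is a tree avoiding $B^*$, at least one of $u,v$ is not in $B^*$'s position on that path. Combining this path with the edge $uv$ yields a cycle that is not contained in a single block, which is a contradiction. The cases where $u$ and $v$ lie in a common block of $R$, or where $u$ or $v$ is not a cut-vertex, are immediate. In those cases $B_u$ or $B_v$ equals $B^*$, because a non-cut-vertex lies in exactly one block, and that block contains all edges at it.
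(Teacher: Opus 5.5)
Your part (1) and the connectivity half of (2) are the paper's arguments, and your proof of (2) is correct, though it needs some cleaning up.

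\textbf{Your first sketch for inducedness is the paper's argument.} If $B^*\notin V(R)$, then $u$ lies in the two distinct blocks $B_u$ and $B^*$, so it is a cut-vertex; the same holds for $v$. Hence $B_u,u,B^*,v,B_v$ is itself a path of $T(F)$ (a $4$-cycle if $B_u=B_v$, which is impossible). This path differs from the path in $R$, contradicting uniqueness of paths in a tree. Saying ``the closed walk must backtrack'' is vaguer than necessary but amounts to the same thing.

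\textbf{Your ``cleaner'' alternative is a genuinely different and valid route.} It is best stated directly:
\begin{itemize}
\item[(i)] $U$ is connected, so it contains a $(u,v)$-path $Q$.
\item[(ii)] By (1), $Q\subseteq B^*$.
\item[(iii)] Since $u\neq v$, $Q$ has an edge. This edge lies in some block of $R$ and also in $B^*$.
\item[(iv)] Distinct blocks share at most one vertex, so each edge lies in exactly one block. Hence $B^*\in V(R)$.
\end{itemize}
This version needs no case distinctions and never uses the tree structure beyond connectivity of $U$. The paper's version instead works entirely inside $T(F)$, using only uniqueness of tree paths and the fact that two blocks share at most one vertex.

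\textbf{Fixes to your write-up.} The sentence about ``$B^*$'s position on that path'' has no content and should be removed. The real reason the cycle $Q+uv$ cannot lie in one block is that $uv\in E(B^*)$, while the edges of $Q$ lie in blocks of $R$, all distinct from $B^*$. If $Q$ were just the edge $uv$, then $uv\in E(U)$ already puts $B^*$ in $R$.

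Your final paragraph of special cases then becomes unnecessary. Also, its justification for the common-block case is the wrong one: the relevant fact there is that two distinct blocks share at most one vertex, not that a non-cut-vertex lies in a unique block.
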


\begin{proof}
If a path with both endpoints in $B$ is not contained in $B$, then it contains a subpath $Q$ with distinct endpoints in $B$ and all
internal vertices outside $B$.
Hence $B\cup Q$ is a 2-connected subgraph of $F$ that strictly contains $B$. This contradiction proves~(1).

For (2), let $R$ be the subtree in the statement. Since $R$ is
connected, the definition of $T(F)$ implies that the union
in (2) is connected. To prove that it is induced, suppose that an edge $xy$ has both endpoints
in the union but the block containing $xy$, say $B_{xy}$, does not belong to $V(R)$. Let
$B_x,B_y\in V(R)\cap\mathcal B(F)$ contain $x,y$, respectively. If
$B_x=B_y$, then the two distinct blocks $B_x$ and $B_{xy}$ share the two
vertices $x,y$, a contradiction. If $B_x\neq B_y$, then
$B_x,x,B_{xy},y,B_y$ is a path in $T(F)$ different from the path in
$R$, contradicting the fact that such a path is unique in $T(F)$.
\end{proof}

The following lemma is used to construct the ``cut set'' $\mathcal{S}$ in Theorem~\ref{thm:del1cut}.

\begin{lemma}\label{lem:tree-separator}
Let $F$ be a connected graph and let $\ell\geq1$. If
$R_1,\ldots,R_\ell$ are pairwise
vertex-disjoint subtrees of the block-cut tree $T(F)$, each
containing at least one vertex of $\mathcal B(F)$, then there is a set
$Z\subseteq\mathcal C(F)$ with $|Z|\leq\ell-1$ such that every path in
$T(F)$ between two distinct subtrees $R_i,R_j$ meets $Z$.
\end{lemma}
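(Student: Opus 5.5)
We argue by induction on $\ell$; the statement is purely about trees. For $\ell=1$ take $Z=\emptyset$, since there is no pair of distinct subtrees. For $\ell\geq2$ we contract each $R_i$ to a single vertex. More precisely, we consider the minimal subtree $T'$ of $T(F)$ containing $R_1\cup\cdots\cup R_\ell$. Then every path in $T(F)$ between $R_i$ and $R_j$ already lies in $T'$, because paths in a tree are unique and $T'$ is convex. It therefore suffices to find $Z\subseteq\mathcal C(F)\cap V(T')$ of size at most $\ell-1$ meeting every $(R_i,R_j)$-path in $T'$ for $i\neq j$.

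The key step uses a leaf of the auxiliary structure. Let $T''$ be obtained from $T'$ by contracting each $R_i$ to a vertex $r_i$; this is again a tree because the $R_i$ are disjoint subtrees. Root $T''$ at $r_1$ and choose $r_j$ ($j\neq1$) whose subtree in the rooting contains no other $r_i$; such a $j$ exists by taking a deepest $r_j$. Let $Q$ be the path in $T'$ from $R_j$ toward its parent, continued up to the first vertex belonging to some other $R_i$ or to a branching point leading to another $R_i$. Concretely, let $e$ be the edge of $T'$ leaving $R_j$ toward $r_1$. Of its two endpoints, exactly one lies outside $R_j$; we call it $w$, and let $v\in R_j$ be the other.

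Next we place the separator. Since $T(F)$ is bipartite between blocks and cut-vertices, one of $v,w$ is a cut-vertex. If $w\in\mathcal C(F)$, put $z:=w$. Otherwise $v\in\mathcal C(F)$, and we put $z:=v$; note that $v$ lies in $R_j$ but is not the unique vertex of $R_j$ needed, because $R_j$ contains a block. Every path from $R_j$ to any other $R_i$ passes through $e$, and hence through both $v$ and $w$, so it meets $z$. Now delete $R_j$ and apply induction to $R_1,\ldots,R_{j-1},R_{j+1},\ldots,R_\ell$ to obtain $Z'$ with $|Z'|\leq\ell-2$. Then $Z:=Z'\cup\{z\}$ has size at most $\ell-1$ and meets all the required paths.

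The main subtlety is the case $z=v\in R_j$. A path between $R_j$ and $R_i$ is allowed to start at $v$ itself; it still "meets" $Z$ there, so this is fine. We must also check that choosing $z$ inside $R_j$ does not interfere with the paths among the other subtrees, and it cannot, since those are handled by $Z'$. One must also ensure that $e$ exists, i.e.\ that $R_j$ is not all of $T'$, which holds since $\ell\geq2$ and the $R_i$ are disjoint.
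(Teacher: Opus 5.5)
Your proof is correct and is essentially the paper's argument: contract each $R_i$ to $r_i$, root the resulting tree at $r_1$, and for a non-root $r_j$ take the cut-vertex endpoint of the edge of $T(F)$ leaving $R_j$ toward the root (one endpoint is a cut-vertex because $T(F)$ is bipartite). The only difference is organizational. The paper takes these endpoints for all $i\geq2$ at once and handles a pair $R_i,R_j$ by assuming $r_i$ is not an ancestor of $r_j$, whereas you peel off a deepest $r_j$ and induct on $\ell$. The sentence defining $Q$ and the remark that $v$ is not ``the unique vertex of $R_j$ needed'' are never used and can be dropped.
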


\begin{proof}
In the block-cut tree $T(F)$, contract each subtree $R_i$ to a vertex
$r_i$, and root the resulting tree (say $T^*$) at $r_1$. For every $i\geq2$, let
$e_i$ be the edge of $T(F)$ corresponding to the first edge on the
path from $r_i$ to the root. Thus $e_i$ has exactly one endpoint in
$R_i$. Let $z_i$ be the endpoint of $e_i$ in $\mathcal C(F)$, and let
$Z:=\{z_i:2\leq i\leq\ell\}$.
Fix distinct $i,j\in[\ell]$. By symmetry, we may assume that
$r_i$ is not an ancestor of $r_j$. Then $i\geq2$, and every path
in $T(F)$ between $R_i$ and $R_j$ contains $e_i$ and hence
meets $Z$ at $z_i$. This proves the lemma.
\end{proof}

We are ready to present the proof of Theorem~\ref{thm:del1cut}.

\begin{proof}[\normalfont\textbf{Proof of Theorem~\ref{thm:del1cut}}]
We will complete the proof in two steps:
first, for every component $D$ of $G_X$ that contains a
path in $\mathcal Y$, we construct the corresponding ``local'' collections $\mathcal P_D,\mathcal H_D$, and a set $\mathcal S_D$;
second, we take their unions over all such components to obtain the collections $\mathcal P,\mathcal H$
and the set $\mathcal S$ in the theorem. 

\medskip

\noindent {\bf Step 1.} Fix a component $D$ of $G_X$ and let $\mathcal Y_D:=\{P\in\mathcal Y:P\subseteq D\}$. 
We will focus in particular on those blocks of $D$ that contain edges from at least two distinct paths in $\mathcal{Y}_D$. To this end, define
\[
 \mathcal B_D:=\{B\in\mathcal B(D):
     E(B)\cap E(P)\neq\emptyset\text{ for at least two paths }
     P\in\mathcal Y_D\}.
\]

For every nontrivial path $Q$ in $D$, let $T_Q$ be the minimal subtree
of the block-cut tree $T(D)$ containing every block $B$ for which
$E(B)\cap E(Q)\neq\emptyset$. These subtrees have the following properties:
\begin{itemize}
    \item[(P1)] $T_Q$ is a path whose endpoints belong to
    $\mathcal B(D)$;
    \item[(P2)]
    $V(T_Q)\cap\mathcal B(D)=\{B\in\mathcal B(D):
    E(B)\cap E(Q)\neq\emptyset\}$, and every cut-vertex of $D$ on
    $T_Q$ belongs to $V(Q)$;
    \item[(P3)] for every block $B$ of $D$, the set
    $E(Q)\cap E(B)$ is either empty or the edge set of a nontrivial
    subpath of $Q$.
\end{itemize}

To capture how the paths in $\mathcal{Y}_D$ are distributed among the blocks in $\mathcal{B}_D$, we introduce the following auxiliary graph.
Let $\mathcal F_D$ be the bipartite graph with bipartition
$(\mathcal Y_D,\mathcal B_D)$, where $P\in\mathcal Y_D$ is adjacent
to $B\in\mathcal B_D$ if and only if $E(P)\cap E(B)\neq\emptyset$. We claim
that $\mathcal F_D$ is a forest. Suppose for contradiction that it contains a
cycle $P_1B_1P_2B_2\cdots P_tB_tP_1$. For each $i$, let $R_i$ be the
nontrivial $(B_{i-1},B_i)$-subpath of $T_{P_i}$, where $B_0:=B_t$.
We show that the paths $R_i$ are pairwise edge-disjoint. Indeed, an edge common
to $R_i$ and $R_j$ would have an endpoint in $\mathcal C(D)$, and by (P2), this cut-vertex would belong to both $P_i$ and $P_j$, contrary to the fact that paths in $\mathcal Y_D$ are disjoint.
Therefore, the union of these $R_i$
contains a cycle in $T(D)$, a contradiction.
This proves that $\mathcal F_D$ is a forest.

Note that, by the definition of $\mathcal B_D$, every block in $\mathcal B_D$ has degree at least two in the
forest $\mathcal F_D$. So we have
\[
 2|\mathcal B_D|\leq e(\mathcal F_D)
 \leq |\mathcal B_D|+|\mathcal Y_D|-1.
\]
Hence $|\mathcal B_D|\leq|\mathcal Y_D|-1$ and
$e(\mathcal F_D)\leq2|\mathcal Y_D|-2$.

We next define the ``local'' path collection $\mathcal P_D$ by the union of the following two collections of paths. For every
$P\in\mathcal Y_D$ and $B\in\mathcal B_D$ with
$E(P)\cap E(B)\neq\emptyset$, (P3) shows
that $E(P)\cap E(B)$ is the edge set of a unique nontrivial subpath of
$P$. Let
\[
 \mathcal P_D^{(1)}:=
 \{Q:Q\text{ is a path and }E(Q)=E(P)\cap E(B)
 \text{ for some }P\in\mathcal Y_D\text{ and }B\in\mathcal B_D
 \text{ with }E(P)\cap E(B)\neq\emptyset\}.
\]
Next, let
\[
 \mathcal P_D^{(2)}:=
 \{Q:Q\text{ is a maximal nontrivial subpath of some }P\in\mathcal Y_D
 \text{ with }E(Q)\subseteq E(P)\setminus E(\mathcal P_D^{(1)})\},
\]
and set $\mathcal P_D:=\mathcal P_D^{(1)}\cup\mathcal P_D^{(2)}$.
The definitions imply that the paths in $\mathcal P_D$ are
edge-disjoint and that $E(\mathcal{Y}_D)$ is the disjoint union of
$E(\mathcal P_D^{(1)})$ and $E(\mathcal P_D^{(2)})$. Hence
$E(\mathcal P_D)=E(\mathcal Y_D)$.

For $P\in\mathcal Y_D$, write $\deg_{\mathcal F_D}(P)$ for its degree
in $\mathcal F_D$. The blocks $B\in\mathcal B_D$ for which
$E(P)\cap E(B)\neq\emptyset$ are exactly the neighbors of $P$ in
$\mathcal F_D$. Hence
$\mathcal P_D^{(1)}$ contains exactly $\deg_{\mathcal F_D}(P)$
subpaths of $P$, and therefore $\mathcal P_D^{(2)}$ contains at most
$\deg_{\mathcal F_D}(P)+1$ subpaths of $P$. 
Since $\mathcal F_D$ is bipartite with $\mathcal Y_D$ being one of the two parts, we have $|\mathcal{P}_D^{(1)}|=\sum_{P\in\mathcal Y_D}\deg_{\mathcal F_D}(P)=e(\mathcal F_D)$. 
Therefore, using the fact $e(\mathcal F_D)\leq2|\mathcal Y_D|-2$, we obtain
\begin{align}\label{eq: bound-Yd}
 |\mathcal P_D|= |\mathcal P^{(1)}_D|+|\mathcal P^{(2)}_D|
 \leq \sum_{P\in\mathcal Y_D}\deg_{\mathcal F_D}(P)
       +\sum_{P\in\mathcal Y_D}
          \bigl(\deg_{\mathcal F_D}(P)+1\bigr)=|\mathcal Y_D|+2e(\mathcal F_D)
 <5|\mathcal Y_D|.
 \end{align}

We next construct $\mathcal H_D$ and $\mathcal S_D$. Let $\mathcal R_D$ be the
following collection of subpaths of the block-cut tree $T(D)$:
\[
 \mathcal R_D:=
 \bigl\{\{B\}:B\in\mathcal B_D\bigr\}
 \cup\bigl\{T_Q:Q\in\mathcal P_D^{(2)}\bigr\}.
\]
We claim that the subpaths in $\mathcal R_D$ are pairwise
disjoint. By the definitions of $T_Q$ and $\mathcal P_D^{(2)}$, no such
$T_Q$ contains a block in $\mathcal B_D$.
Consider distinct $Q,Q'\in\mathcal P_D^{(2)}$. Since $Q$ and $Q'$ are vertex-disjoint, (P2) implies that $T_Q$ and $T_{Q'}$ share no cut-vertex of $D$. If they share a block $B$, then $B\notin\mathcal B_D$, so $Q$ and $Q'$ lie on the same path $P\in\mathcal Y_D$. By (P3) and the definition of $\mathcal P_D^{(2)}$, all edges of $P$ in $B$ belong to a single member of $\mathcal P_D^{(2)}$, forcing $Q=Q'$, a contradiction. Hence the members of $\mathcal R_D$ are pairwise disjoint.  
This proves the claim.

Define $\mathcal H_D$ to be the following collection of unions of blocks:
\[
 \mathcal H_D:=
\left\{\bigcup_{B\in V(R)\cap\mathcal B(D)}B:
 R\in\mathcal R_D\right\}.
\]
Since the paths in $\mathcal R_D$ are pairwise disjoint, the subgraphs in $\mathcal H_D$
are pairwise edge-disjoint. By (P1), every subpath in $\mathcal R_D$ contains a block. This gives a one-to-one correspondence between $\mathcal R_D$ and
$\mathcal H_D$. By Lemma~\ref{lem:block-convexity}(2), every subgraph in $\mathcal H_D$ is connected and induced.
Apply Lemma~\ref{lem:tree-separator} to $\mathcal R_D$, and let $\mathcal S_D$ be the resulting set of cut-vertices of $D$.
Then we have
\begin{align}\label{eq:bound-Hd}
    |\mathcal S_D|<|\mathcal H_D|=|\mathcal R_D|=|\mathcal{B}_D|+|\mathcal{P}_D^{(2)}|\leq |\mathcal B_D|+\sum_{P\in\mathcal Y_D}
          \bigl(\deg_{\mathcal F_D}(P)+1\bigr)=|\mathcal B_D|+|\mathcal Y_D|+e(\mathcal F_D)
       \leq4|\mathcal Y_D|.
\end{align}

\noindent {\bf Step 2.}
For the remainder of the proof, each union over $D$ always means that it ranges over the
components $D$ of $G_X$ that contain a path in $\mathcal Y$. Let
\[
 \mathcal P:=\bigcup_D\mathcal P_D,\qquad
 \mathcal H:=\bigcup_D\mathcal H_D,\qquad \mbox{and} \qquad
 \mathcal S:=\bigcup_D\mathcal S_D.
\]

We now verify each of the five conclusions in Theorem~\ref{thm:del1cut}. 
First, using $\sum_D|\mathcal Y_D|=|\mathcal Y|\leq m$, item~(1) is directly obtained by summing the bounds in~\eqref{eq: bound-Yd} and~\eqref{eq:bound-Hd}.

Consider~(2). If $Q\in\mathcal P_D^{(1)}$, then $E(Q)=E(P)\cap E(B)$ for some $P\in\mathcal Y_D$ and $B\in\mathcal B_D$,
and hence $Q\subseteq B$ and $B$ is an element in
$\mathcal H_D$. If $Q\in\mathcal P_D^{(2)}$, then $Q$ lies in the
subgraph
$\bigcup_{B\in V(T_Q)\cap\mathcal B(D)}B\in\mathcal H_D$.
Since the subgraphs in $\mathcal H_D$ are pairwise edge-disjoint in $D$, every
path in $\mathcal P_D$ lies in exactly one of them. The components $D$
are also pairwise disjoint. Hence every path in $\mathcal P$ lies in
exactly one subgraph in $\mathcal H$. Moreover,
$E(\mathcal P_D)=E(\mathcal Y_D)$ for every $D$, and therefore
$E(\mathcal P)=E(Y-X)$. This proves (2).

We next verify (3). By construction, each $H\in\mathcal H_D$
is a union of blocks in $D$ and contains at least one path
in $\mathcal P_D$. Moreover, distinct paths in $\mathcal P_D$
contained in the same $H$ are subpaths of distinct paths in
$\mathcal Y_D$. Since the paths in $\mathcal Y_D$ are pairwise
vertex-disjoint, so are these subpaths.
The components $D$ are pairwise disjoint, so every $H\in\mathcal H$ contains at least one path in $\mathcal P$, and the paths in
$\mathcal P$ contained in $H$ are pairwise vertex-disjoint.

By definition, $\mathcal S_D$ is a set of cut-vertices that separates every pair of distinct paths in $\mathcal{R}_D$ in the block-cut tree $T(D)$.
By the definition of the block-cut tree, $\mathcal S_D$ separates every pair of distinct subgraphs in $\mathcal{H}_D$.
Hence, $\mathcal{S}$ separates every pair of distinct subgraphs in $\mathcal{H}$. This verifies~(3).

We then consider (4). By definition of $\mathcal H_D$, every $H\in\mathcal H$ is a union of blocks. Moreover, if a subgraph in $\mathcal H_D$ contains at
least two paths in $\mathcal P_D$, then it is a block in
$\mathcal B_D$. 
Such a subgraph cannot be an edge or a single vertex, since it contains two vertex-disjoint nontrivial paths by (3).
This proves (4).

It remains to prove (5). 
Fix a cycle $C\subseteq H$ with $|C|=c(H)$.
Let $I_1,I_2$ be the two disjoint paths in the
hypothesis of (5), where $x_i\in V(X)$ is the endpoint of $I_i$ on $X$. We claim
that $H\cup I_1\cup I_2$ contains two disjoint
$(\{x_1,x_2\},C)$-paths. Otherwise, Menger's theorem gives a cut-vertex $w$ that separates $\{x_1,x_2\}$ from $C$ in $H\cup I_1\cup I_2$. Relabeling the two
paths, we may assume $w\notin V(I_1)$. 
Let $a$ be the endpoint of $I_1$ in $H$.
By~(4), $H$ is 2-connected, so there is an $(a,C)$-path $J$ in $H-w$.
Then $I_1\cup J$ contains an $(x_1,C)$-path that avoids $w$, a contradiction. This proves the claim.
Therefore, there are two disjoint $(X,C)$-paths $L_1$ and $L_2$ in $H\cup I_1\cup I_2$.
By relabeling, we assume that $x_i$ is an endpoint of $L_i$ for $i\in [2]$, and let $y_i$ be the endpoint of $L_i$ in $C$. 
Then the concatenation of $L_1$, $L_2$, and the longer one of the two $(y_1,y_2)$-paths on $C$ gives an $(x_1,x_2)$-path whose interior vertices all lie outside $X$.
By the maximality of $|X|$, the length of this path is at most $\operatorname{dist}_X(x_1,x_2)$. Hence, 
$\operatorname{dist}_X(x_1,x_2)\geq |C|/2$.
This completes the proof of Theorem~\ref{thm:del1cut}.
\end{proof}

\section{Theorem~\ref{thm:smith-linear} for large circumference: an amplifier covering}\label{sec:4}

Throughout this section, let $X$ and $Y$ be longest cycles in a $k$-connected graph $G$ with $m:=|V(X)\cap V(Y)|\geq2$. 
Let $G_X:=G-X$ and let $\mathcal Y$ be the
collection of nontrivial maximal paths of $Y-X$. Thus $|\mathcal Y|\leq m$. Let
$\mathcal P,\mathcal H,\mathcal S$ be the collections constructed in Theorem~\ref{thm:del1cut}. 
Recall the definition of an amplifier from Definition~\ref{def:amplifier}.

In this section, we first establish Lemma~\ref{lem:path-cover}, which gives a covering for all but $O(m)$ edges of the path
collection $\mathcal P$ using $O(m)$ amplifiers and some leftovers.
We further apply this covering result in Subsection~\ref{sec:main proof 1} to prove Theorem~\ref{thm:smith-linear} under the additional assumption that $c(G)=\Omega(mk)$.

\subsection{A covering result using amplifiers}\label{sec:path refinement}
\begin{lemma}\label{lem:path-cover}
    There exist collections of paths $\mathcal{U}$ and $\mathcal{V}$ in
    $G_X$ and a set $\mathcal S'\subseteq V(G_X)$ satisfying:
    \begin{itemize}
        \item[(1)] Every path in $\mathcal U\cup\mathcal V$ is a subpath
        of some $P\in\mathcal P$, and $
        \bigl|E(\mathcal P)\setminus
        \bigl(E(\mathcal U)\cup E(\mathcal V)\bigr)\bigr|\leq20m;$
        \item[(2)] $\mathcal S'$ contains $\mathcal S$ and the vertex
        of every trivial component of $Y-X$, and
        $|\mathcal S'|\leq115m, 
        |\mathcal U|\leq10m,
        |\mathcal V|\leq15m;$
        \item[(3)] Every $Q\in\mathcal U$ is contained in some
        $H\in\mathcal H$ with $|\mathcal P_H|\geq2$ and is an
        $(H,1/13)$-amplifier;
        \item[(4)] For every $Q\in\mathcal V$, the set
        $\mathcal S'$ separates $V(Q)$ from
        $V(\mathcal P)\setminus V(Q)$ in $G_X$.
    \end{itemize}
\end{lemma}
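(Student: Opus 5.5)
We treat the members of $\mathcal P$ according to which $H\in\mathcal H$ contains them. If $|\mathcal P_H|=1$, say $\mathcal P_H=\{P\}$, we put $P$ into $\mathcal V$. Property (3) of Theorem~\ref{thm:del1cut} says that $\mathcal S$ separates $H$ from every other member of $\mathcal H$ in $G_X$. Hence $\mathcal S$, together with the attachment vertices between $H$ and the rest of its component, separates $V(P)$ from the other paths. There are two leaks to check. The first is a path of $\mathcal P$ lying in a different $H'$ but sharing a cut-vertex with $P$ (consecutive subpaths of one $Y$-path). The second is a trivial component of $Y-X$. For the first, we add the two endpoints of each member of $\mathcal P$ to $\mathcal S'$, which costs at most $10m$. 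For the second, we add the trivial components, at most $m$. Every path of $\mathcal P$ lies in its $H$ and its endpoints are in $\mathcal S'$. So any $(V(Q),V(\mathcal P)\setminus V(Q))$-path in $G_X$ must leave $H$ through a vertex of $\mathcal S\cup\mathcal S'$ or pass through a cut-vertex on the block-cut tree path, and those cut-vertices are in $\mathcal S$ by Lemma~\ref{lem:tree-separator}. This gives (4).

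The main work is when $|\mathcal P_H|\ge2$. Then $H$ is a block of $G_X$ with $|V(H)|\ge3$, so it is 2-connected. Fix $P\in\mathcal P_H$ with endpoints $a,b$. For each $v\in V(P)$, we call $v$ \emph{bad} if $H-v$ has no two disjoint $(P[a,v),P(v,b])$-paths. By Menger's theorem, $v$ is bad exactly when some vertex $w\neq v$ separates $P[a,v)$ from $P(v,b]$ in $H-v$, that is, $\{v,w\}$ is a 2-cut of $H$ splitting $P$. The plan is to cut $P$ at a bounded number of such 2-cuts. First remove the first two and last two edges of $P$; this loses at most $4$ edges per path, within the $20m$ budget. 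Then split the rest into \emph{good stretches}, maximal intervals $P[c,d]$ containing no bad vertex and with $P[a,c]$, $P[d,b]$ of at least three vertices. By Lemma~\ref{lem:local-amplifier}, each good stretch is an $(H,1/6)$-amplifier, hence an $(H,1/13)$-amplifier. These go into $\mathcal U$.

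Bad vertices can be numerous, so that alone is not enough; I expect this to be the main obstacle. My plan is to group the bad vertices via the SPQR/2-cut structure of $H$. Consecutive bad vertices $v<_P v'$ whose separating partners $w,w'$ lie in a common "bridge" cut off a piece of $H$ attached only at $\{v,w\}$ (or $\{v',w'\}$). If that piece contains no other path of $\mathcal P_H$, then the subpath of $P$ through it is isolated from the rest of $\mathcal P$ by $\{v,w\}$ (plus $\mathcal S$). Such a piece is placed in $\mathcal V$ with $v,w$ added to $\mathcal S'$. If instead the piece does contain another path, we charge the cut to that path. A counting argument in a tree-like structure (the 2-cuts form a laminar/tree decomposition, as in the forest argument for $\mathcal F_D$) should bound the number of charged cuts by $O(|\mathcal P_H|)$. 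Each cut contributes $O(1)$ leftover edges (the edges at $v$), $O(1)$ new $\mathcal U$ or $\mathcal V$ pieces, and $2$ vertices to $\mathcal S'$.

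Finally we sum over $H$, using $\sum_H|\mathcal P_H|=|\mathcal P|\le5m$ and $|\mathcal S|\le4m$, and tune the constants to get $|\mathcal U|\le10m$, $|\mathcal V|\le15m$, $|\mathcal S'|\le115m$ and at most $20m$ uncovered edges. The weaker constant $1/13$, compared with $1/6$, suggests the intended argument merges a good stretch with one adjacent short piece, or applies Theorem~\ref{lem:amplifier} to a slightly enlarged stretch. If the charging falls short, I would fall back on that: absorb short bad segments into neighbouring amplifiers at the cost of halving the ratio.
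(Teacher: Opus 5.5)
Your treatment of the case $|\mathcal P_H|=1$ is fine; it is really just Theorem~\ref{thm:del1cut}(3), so the extra endpoints are not needed. The case $|\mathcal P_H|\geq2$, however, has a genuine gap. Your only source of amplifiers is good stretches via Lemma~\ref{lem:local-amplifier}, and your claim that only $O(|\mathcal P_H|)$ cuts get charged is false.

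Suppose $H$ consists of $P=p_0\cdots p_n$ and $R=r_0\cdots r_n$ in $\mathcal P_H$, together with all rungs $p_ir_i$. Every interior $p_i$ is bad, with partner $r_i$, so there are no good stretches at all. All $n-1$ cuts $\{p_i,r_i\}$ have $R$ on both sides, so all of them are charged to the single path $R$. Moreover, no subpath of $P$ of length $\ell$ can be placed in $\mathcal V$ with fewer than $\ell+1$ vertices of $\mathcal S'$, because every vertex of $P$ has a neighbour on $R$. So your scheme cannot handle $P$ within the budgets once $n\gg m$. Absorbing short bad segments does not help, since here the bad segment is all of $P$.

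There is a second failure even when good stretches exist. If other paths attach only near the two ends of $P$ and the long middle alternates between good stretches and bad vertices, each good stretch becomes its own member of $\mathcal U$. Then $|\mathcal U|$ is unbounded, and a union of many amplifiers is not an amplifier with a constant ratio.

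The missing idea is a source of amplifiers that needs no local $3$-connectivity. Let $D$ be a component of $H-P$ containing another member of $\mathcal P_H$, and let $u<_Pv$ be its first and last neighbours on $P$. Then $P[u,v]$ together with a $(u,v)$-path through $D$ is a cycle. Hence $P[u,v]$ is an $(H,1)$-amplifier however many bad vertices it contains; in the ladder this is all of $P$.

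The paper extends each such $P[u,v]$ outward as long as the vertices stay good, which gives two flanking $(H,1/6)$-amplifiers. This is where $1/13$ comes from, via $\max\{\ell_1/6,\ell_2,\ell_3/6\}\geq(\ell_1+\ell_2+\ell_3)/13$. The result is at most $|\mathcal D_P|$ amplifiers per $P$.

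\emph{Everything else}, regardless of its internal structure, goes into at most $|\mathcal D_P|+1$ members of $\mathcal V$. Each end of such a piece is a point where the extension stopped, and this yields a separator of size at most $3$. It is either a bad vertex with its partner, or a good $x$ together with its bad successor and that successor's partner. Since each $D\in\mathcal D_P$ attaches to $P$ only inside its own $P[u,v]$, these separators together with $\{a,a^+,b^-,b\}$ cut the piece off from all other paths of $\mathcal P_H$.

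Finally, $\sum_{P\in\mathcal P_H}|\mathcal D_P|\leq 2|\mathcal P_H|-2$. This follows from a block-cut tree count in the graph obtained from $H$ by contracting each member of $\mathcal P_H$, and it yields the bounds $10m$, $15m$ and $115m$. In short, the amplifiers must be anchored at the other paths of $\mathcal P_H$, not at the good stretches.
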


Recall that $\mathcal P_H$ is the collection of paths in $\mathcal P$
contained in $H$. For $H\in\mathcal H$ with $|\mathcal P_H|\geq2$
and $P\in\mathcal P_H$, let $\mathcal D_P$ be the collection of
components of $H-P$ that contain a path in
$\mathcal P_H\setminus\{P\}$.
The proof of Lemma~\ref{lem:path-cover} applies the following construction independently to certain paths $P\in\mathcal P$.

\begin{lemma}\label{lem:P-split}
    Let $H\in\mathcal H$ satisfy $|\mathcal P_H|\geq2$, and let
    $P\in\mathcal P_H$ satisfy $|P|\geq5$.
    Then there exist two collections $\mathcal U_P$ and $\mathcal V_P$
    of subpaths of $P$, and a vertex set $S_P\subseteq V(H)$, such that:
    \begin{itemize}
        \item[(1)] The paths in $\mathcal U_P\cup\mathcal V_P$ cover
        $E(P)$.
        Moreover, $|\mathcal U_P|\leq|\mathcal D_P|,
            |\mathcal V_P|\leq|\mathcal D_P|+1, \mbox{and } |S_P|\leq6|\mathcal D_P|+10.$
        \item[(2)] Every path in $\mathcal U_P$ is an
        $(H,1/13)$-amplifier;
        \item[(3)] For every $Q\in\mathcal V_P$,
        the set $S_P$ separates $V(Q)$ from
        $V(P-Q)$ and from
        $V(\mathcal P_H\setminus\{P\})$ in $H$.
    \end{itemize}
\end{lemma}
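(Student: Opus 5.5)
We plan to build the split of $P$ by walking along $P=p_0p_1\cdots p_\ell$ and testing the local $2$-connectivity hypothesis of Lemma~\ref{lem:local-amplifier} at each vertex. By Theorem~\ref{thm:del1cut}(4), $H$ is a block of $G_X$ with $|V(H)|\ge3$, so $H$ is $2$-connected. Call an internal vertex $v$ of $P$ \emph{good} if $H-v$ contains two disjoint $(P[p_0,v),P(v,p_\ell])$-paths, and \emph{bad} otherwise. If $v$ is bad, Menger gives a single vertex $s_v\neq v$ such that $\{v,s_v\}$ separates $P[p_0,v)$ from $P(v,p_\ell]$ in $H$; this is a $2$-cut of $H$.

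The key structural step is to bound the number of ``essentially different'' bad vertices by $O(|\mathcal D_P|)$. Each component $D\in\mathcal D_P$ attaches to $P$, since $H$ is $2$-connected, and contains a vertex of some other member of $\mathcal P_H$. Components of $H-P$ outside $\mathcal D_P$ contain no $\mathcal P_H$-paths and are harmless. Now fix a bad $v$ with its cut $\{v,s_v\}$. Either $s_v\notin V(P)$, in which case $s_v$ lies in some component of $H-P$ whose attachments straddle $v$ only through $s_v$. Or $s_v\in V(P)$, in which case no component of $H-P$ has attachments on both sides of the segment between $v$ and $s_v$. Grouping the bad vertices into maximal runs, I expect each boundary between a ``good stretch'' and a ``bad stretch'' to be charged to a distinct component of $\mathcal D_P$, using the leftmost and rightmost attachment points of each component. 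This should give at most $|\mathcal D_P|+1$ bad stretches and at most $|\mathcal D_P|$ good stretches.

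We then produce the two collections. Each good stretch $P[c,d]$, after trimming two edges from each end, satisfies Lemma~\ref{lem:local-amplifier} with $P[a,c]$ and $P[d,b]$ of at least three vertices. Here $a,b$ are the ends of $P$, or we extend into neighbouring stretches; this is where $|P|\ge5$ is used. So each good stretch is an $(H,1/6)$-amplifier and goes into $\mathcal U_P$. The trimmed edges, at most $4$ per stretch, are absorbed into the adjacent bad pieces. To keep amplifiers long enough relative to what is absorbed, we may lengthen a short amplifier, which is why the constant weakens from $1/6$ to $1/13$. Each bad stretch, enlarged by the absorbed edges, goes into $\mathcal V_P$.

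For $S_P$ we take, for each bad stretch $Q$, its two end-vertices on $P$, the partner vertices $s_v$ of the extreme bad vertices, and the attachment vertices of the at most constantly many components charged to $Q$. We also add $O(1)$ vertices near the ends of $P$. This gives $|S_P|\le 6|\mathcal D_P|+10$. Separation of $V(Q)$ from $V(P-Q)$ comes from the two $2$-cuts at the ends of the run. Separation of $V(Q)$ from $V(\mathcal P_H\setminus\{P\})$ holds because any component of $\mathcal D_P$ reaching $Q$ must do so through a charged cut vertex.

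The main obstacle is the charging argument bounding bad stretches by $|\mathcal D_P|+1$, which requires an exact description of the $2$-cuts $\{v,s_v\}$ and their nesting along $P$. I would first prove a crossing-free lemma: for bad $v<_P v'$, the associated cuts are nested or disjoint. Then the runs form an interval structure, and the counting follows by assigning each run boundary the component of $\mathcal D_P$ whose attachment span ends there.
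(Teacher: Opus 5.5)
There is a genuine gap. Your decomposition sends good stretches to $\mathcal U_P$ and bad stretches to $\mathcal V_P$, and both the counting claim and the separation claim it relies on are false. The examples below use only what your argument uses: $H$ is $2$-connected, and the members of $\mathcal P_H$ are disjoint paths. Write $P=p_0\cdots p_n$.

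\emph{Counting.} Let $D\in\mathcal D_P$ consist of a path $z_0z_1\cdots z_t$, which is the other member of $\mathcal P_H$, with $z_0p_0,z_tp_n\in E(H)$. Add vertices $x_1,\dots,x_t$ with $N_H(x_j)=\{z_j,p_{10j},p_{10j+2}\}$, where $n>10t+2$. Then $p_{10j+1}$ is good, witnessed by $p_{10j}x_jp_{10j+2}$ and $p_0z_0\cdots z_tp_n$. But $p_{10j+5}$ is bad, since $\{p_{10j+5},z_j\}$ separates the two sides. So $|\mathcal D_P|=1$, yet there are about $t$ good stretches. Making each $x_j$ its own component of $H-P$, adjacent only to $p_{10j}$ and $p_{10j+2}$, gives the same alternation. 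So components outside $\mathcal D_P$ are not harmless for your count.

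\emph{Separation.} Take $n=2t$, let $D=d_0d_1\cdots d_t$ be the other member of $\mathcal P_H$, and add the rungs $p_{2j}d_j$. This $H$ is $2$-connected. Every internal vertex of $P$ is bad: $p_{2j}$ and $p_{2j+1}$ each form a $2$-cut together with $d_j$. So there are no good stretches, and $\mathcal V_P$ must cover all of $P$. Then $S_P$ must meet each of the $t+1$ disjoint rungs, so $|S_P|\ge t+1$. No crossing-free lemma about the cuts $\{v,s_v\}$ repairs this. The problem is the dichotomy itself: a run of bad vertices can lie entirely on a cycle through a component of $\mathcal D_P$, and then it has to go into $\mathcal U_P$.

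The missing idea is to anchor the amplifiers to the components of $\mathcal D_P$, not to good vertices. For $D_i\in\mathcal D_P$, let $u_i<_Pv_i$ be its first and last attachment vertices on $P$. Then $P[u_i,v_i]$ closes into a cycle through $D_i$, so it is an $(H,1)$-amplifier whatever the status of its vertices. In the ladder this is all of $P$.

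The good/bad test is used only to extend $P[u_i,v_i]$ on each side through a maximal run of good vertices. There Lemma~\ref{lem:local-amplifier} gives $(H,1/6)$-amplifiers. The constant $1/13$ comes from $\max\{\ell_1/6,\ell_2,\ell_3/6\}\ge(\ell_1+\ell_2+\ell_3)/13$ over the three consecutive pieces, not from lengthening short amplifiers.

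This construction gives $|\mathcal U_P|\le|\mathcal D_P|$ at once, with one amplifier per component and no charging. The leftover pieces automatically have interiors disjoint from every attachment span $P[u_i,v_i]$. The only exceptions are components attached solely within $\{a,a^+\}$ or $\{b^-,b\}$, which are handled by putting these four vertices into $S_P$. Finally, the bad vertex where an extension stops (or the vertex after it) supplies a separator of at most three vertices at each endpoint of a leftover piece. This yields $|S_P|\le 6(|\mathcal D_P|+1)+4$.
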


\begin{proof}
Let $a$ and $b$ be the endpoints of $P$ with $a <_P b$.
Let $a^+$ be the successor of $a$ and $b^-$ the predecessor of $b$ in
this order. Choose $c,d\in V(P)$ such that
$|P[a,c]|=|P[d,b]|=2$. Since $|P|\geq5$, we have $c<_P d$.

Let $\mathcal D_P=\{D_1,\ldots,D_\ell\}$, where
$\ell=|\mathcal D_P|$. By Theorem~\ref{thm:del1cut}(4), $H$ is 2-connected, so $D_i$ has at least two neighbors on $P$.
Let $u_i$ and $v_i$ be the first and last such neighbors in the order $<_P$, respectively.
Then $u_i<_P v_i$.
Since an $(u_i,v_i)$-path with internal vertices in $D_i$ together with
$P[u_i,v_i]$ forms a cycle, we see that $P[u_i,v_i]$ is an
$(H,1)$-amplifier. 

We now define two vertices $\overline{u_i}$ and $\overline{v_i}$. See Figure~\ref{fig:P-split}(I) for an illustration of $H$, where the shaded region represents $D_i$,
and the solid curves represent paths in $H$. The blue dashed curves indicate the association of $D_i$ with $\overline{u_i}$ and $\overline{v_i}$; they do not represent paths in $H$. 
If $u_i <_P c$ or there are no two disjoint $(P[a,u_i), P(u_i,b])$-paths in $H-u_i$, let $\overline{u_i} := u_i$.
Otherwise, let $\overline{u_i}$ be the minimum vertex $w \in V(P[c,u_i])$ (with respect to $<_P$) such that
for every $v\in V(P[w,u_i])$, the graph $H-v$ contains two disjoint
$(P[a,v),P(v,b])$-paths.
We define the vertex $\overline{v_i}$ symmetrically:
If $v_i >_P d$ or there are no two disjoint $(P[a,v_i), P(v_i,b])$-paths in $H-v_i$, let $\overline{v_i} := v_i$.
Otherwise, let $\overline{v_i}$ be the maximum vertex $w \in V(P[v_i,d])$ such that
for every $v\in V(P[v_i,w])$, the graph $H-v$ contains two disjoint
$(P[a,v),P(v,b])$-paths.

\begin{figure}[htb]
\centering
\scalebox{0.9}{%
\begin{minipage}{\textwidth}
\centering
\tikzset{
  every picture/.style={line width=0.75pt},
  relation/.style={
    draw=blue!70!black,
    line width=0.75pt,
    dash pattern={on 3pt off 2pt}
  }
}

\begin{tikzpicture}[x=0.75pt,y=0.75pt,yscale=-1,xscale=1]
\path[use as bounding box] (45,20) rectangle (630,210);
\node[font=\bfseries] at (60,35) {(I)};

\draw (79.25,156.75) -- (599.23,156.75);

\draw[fill=black!5,dash pattern={on 0.84pt off 2.51pt}]
  (263.88,49.62) .. controls (263.88,42.65) and (269.53,37) .. (276.49,37)
  -- (359.97,37) .. controls (366.94,37) and (372.59,42.65) .. (372.59,49.62)
  -- (372.59,87.47) .. controls (372.59,94.44) and (366.94,100.09) .. (359.97,100.09)
  -- (276.49,100.09) .. controls (269.53,100.09) and (263.88,94.44) .. (263.88,87.47)
  -- cycle;
\draw[draw=black!28,line width=0.45pt]
  (299.07,65.68) -- (309.50,50.00) -- (340.50,50.00) -- (347.71,69.01);
\draw[draw=black!28,line width=0.45pt]
  (309.50,50.00) -- (320.63,70.12) -- (340.50,50.00);

\draw[relation]
  (266.20,84.50) .. controls (225,92) and (171,128) .. (138.21,156.75);
\draw[relation]
  (370.30,84.50) .. controls (408,92) and (447,129) .. (471.26,156.75);

\draw (238.26,156.75) .. controls (251.53,114.52) and (246,102.31) .. (290.23,69.01);
\draw (394.70,156.75) .. controls (384.20,121.18) and (379.22,103.42) .. (352.14,70.12);
\draw (322.29,156.75) .. controls (333.89,122.29) and (315.65,113.41) .. (320.63,70.12);
\draw (276.41,72.89) -- (299.07,65.68);
\draw (310.68,70.12) -- (328.37,70.12);
\draw (347.71,69.01) -- (360.98,71.78);

\draw[dash pattern={on 0.84pt off 2.51pt}] (237.55,104.18) -- (238.26,205.54);
\draw[dash pattern={on 0.84pt off 2.51pt}] (396.20,98.08) -- (395.81,204.99);
\draw[dash pattern={on 4.5pt off 4.5pt}] (244.69,181.12) -- (389.94,181.12);
\draw[shift={(391.94,181.12)},rotate=180] (10.93,-3.29)
  .. controls (6.95,-1.4) and (3.31,-0.3) .. (0,0)
  .. controls (3.31,0.3) and (6.95,1.4) .. (10.93,3.29);
\draw[shift={(242.69,181.12)},rotate=0] (10.93,-3.29)
  .. controls (6.95,-1.4) and (3.31,-0.3) .. (0,0)
  .. controls (3.31,0.3) and (6.95,1.4) .. (10.93,3.29);

\draw[dash pattern={on 0.84pt off 2.51pt}] (138.21,117.29) -- (138.77,187.23);
\draw[dash pattern={on 0.84pt off 2.51pt}] (471.26,118.40) -- (471.81,188.34);

\draw[dash pattern={on 4.5pt off 4.5pt}] (141.87,181.12) -- (232.79,181.12);
\draw[shift={(234.79,181.12)},rotate=180] (10.93,-3.29)
  .. controls (6.95,-1.4) and (3.31,-0.3) .. (0,0)
  .. controls (3.31,0.3) and (6.95,1.4) .. (10.93,3.29);
\draw[shift={(139.87,181.12)},rotate=0] (10.93,-3.29)
  .. controls (6.95,-1.4) and (3.31,-0.3) .. (0,0)
  .. controls (3.31,0.3) and (6.95,1.4) .. (10.93,3.29);

\draw[dash pattern={on 4.5pt off 4.5pt}] (401.67,181.12) -- (466.06,181.12);
\draw[shift={(468.06,181.12)},rotate=180] (10.93,-3.29)
  .. controls (6.95,-1.4) and (3.31,-0.3) .. (0,0)
  .. controls (3.31,0.3) and (6.95,1.4) .. (10.93,3.29);
\draw[shift={(399.68,181.12)},rotate=0] (10.93,-3.29)
  .. controls (6.95,-1.4) and (3.31,-0.3) .. (0,0)
  .. controls (3.31,0.3) and (6.95,1.4) .. (10.93,3.29);

\draw (158.48,156.75) .. controls (174.94,138.54) and (195.57,142.24) .. (207.36,156.75);
\draw (185.99,156.75) .. controls (200.73,176.28) and (233.16,175.17) .. (248.64,156.75);
\draw (131.58,156.75) .. controls (152.46,175.91) and (181.57,176.28) .. (199.99,156.75);
\draw (117.12,156.75) .. controls (131.86,175.77) and (165.72,175.54) .. (181.20,156.75);
\draw (192.71,156.75) .. controls (213.58,176.03) and (239.43,176.65) .. (257.85,156.75);
\draw (171.75,156.75) .. controls (188.20,138.54) and (208.84,142.24) .. (220.63,156.75);
\draw (436.21,156.75) .. controls (450.95,175.91) and (463.78,172.95) .. (478.16,156.75);
\draw (381.80,156.75) .. controls (402.68,175.54) and (431.79,175.91) .. (450.22,156.75);
\draw (368.78,156.75) .. controls (383.52,176.28) and (415.95,175.17) .. (431.42,156.75);
\draw (442.93,156.75) .. controls (463.81,175.66) and (489.65,176.28) .. (508.07,156.75);
\draw (406.12,156.75) .. controls (422.58,138.91) and (443.22,142.61) .. (455.01,156.75);
\draw (419.39,156.75) .. controls (435.85,138.91) and (456.48,142.61) .. (468.27,156.75);

\foreach \x in {79.25,86.875,94.50,138.21,238.26,322.29,394.70,471.26,583.77,591.50,599.23}
  \fill (\x,156.75) circle (1.8pt);

\node[anchor=north west,inner sep=0.75pt] at (67.67,125.57) {$P$};
\node[anchor=north west,inner sep=0.75pt] at (231.53,40.16) {$D_i$};
\node[anchor=north west,inner sep=0.75pt,font=\scriptsize] at (224.76,144.67) {$u_i$};
\node[anchor=north west,inner sep=0.75pt,font=\scriptsize] at (397.78,144.67) {$v_i$};
\node[anchor=north west,fill=white,inner sep=1.2pt,font=\scriptsize]
  at (282.55,187.19) {$(H,1)$-amplifier};
\node[anchor=north west,fill=white,inner sep=0.5pt,font=\scriptsize]
  at (120.45,144.67) {$\overline{u_i}$};
\node[anchor=north west,fill=white,inner sep=1.2pt,font=\scriptsize]
  at (150.00,188.50) {$(H,1/6)$-amplifier};
\node[anchor=north west,fill=white,inner sep=1.2pt,font=\scriptsize]
  at (402.00,188.50) {$(H,1/6)$-amplifier};
\node[anchor=north west,fill=white,inner sep=0.5pt,font=\scriptsize]
  at (473.62,145.22) {$\overline{v_i}$};
\node[anchor=base,inner sep=0pt,font=\scriptsize] at (71.50,169.00) {$a$};
\node[anchor=base,inner sep=0pt,font=\scriptsize] at (86.875,169.00) {$a^+$};
\node[anchor=base,inner sep=0pt,font=\scriptsize] at (102.25,169.00) {$c$};
\node[anchor=base,inner sep=0pt,font=\scriptsize] at (575.50,169.00) {$d$};
\node[anchor=base,inner sep=0pt,font=\scriptsize] at (591.50,169.00) {$b^-$};
\node[anchor=base,inner sep=0pt,font=\scriptsize] at (607.50,169.00) {$b$};
\end{tikzpicture}

\par\vspace{1.2em}

\begin{tikzpicture}[x=0.75pt,y=0.75pt,yscale=-1,xscale=1]
\path[use as bounding box] (45,20) rectangle (630,210);
\node[font=\bfseries] at (60,35) {(II)};

\draw[red] (79.25,119) -- (125.33,119);
\draw       (125.33,119) -- (285.83,119);
\draw[red] (285.83,119) -- (331.33,119);
\draw       (331.33,119) -- (424.33,119);
\draw[red] (424.33,119) -- (461.83,119);
\draw       (461.83,119) -- (551.14,119);
\draw[red] (551.14,119) -- (599.23,119);

\draw[fill=black!5,dash pattern={on 0.84pt off 2.51pt}]
  (149,38.10) .. controls (149,32.80) and (153.30,28.50) .. (158.60,28.50)
  -- (216.23,28.50) .. controls (221.54,28.50) and (225.83,32.80) .. (225.83,38.10)
  -- (225.83,66.90) .. controls (225.83,72.20) and (221.54,76.50) .. (216.23,76.50)
  -- (158.60,76.50) .. controls (153.30,76.50) and (149,72.20) .. (149,66.90)
  -- cycle;
\draw[fill=black!5,dash pattern={on 0.84pt off 2.51pt}]
  (210,161.10) .. controls (210,155.80) and (214.30,151.50) .. (219.60,151.50)
  -- (277.23,151.50) .. controls (282.54,151.50) and (286.83,155.80) .. (286.83,161.10)
  -- (286.83,189.90) .. controls (286.83,195.20) and (282.54,199.50) .. (277.23,199.50)
  -- (219.60,199.50) .. controls (214.30,199.50) and (210,195.20) .. (210,189.90)
  -- cycle;
\draw[fill=black!5,dash pattern={on 0.84pt off 2.51pt}]
  (325,36.10) .. controls (325,30.80) and (329.30,26.50) .. (334.60,26.50)
  -- (392.23,26.50) .. controls (397.54,26.50) and (401.83,30.80) .. (401.83,36.10)
  -- (401.83,64.90) .. controls (401.83,70.20) and (397.54,74.50) .. (392.23,74.50)
  -- (334.60,74.50) .. controls (329.30,74.50) and (325,70.20) .. (325,64.90)
  -- cycle;
\draw[fill=black!5,dash pattern={on 0.84pt off 2.51pt}]
  (478.50,162.60) .. controls (478.50,157.30) and (482.80,153) .. (488.10,153)
  -- (535.04,153) .. controls (540.34,153) and (544.64,157.30) .. (544.64,162.60)
  -- (544.64,191.40) .. controls (544.64,196.70) and (540.34,201) .. (535.04,201)
  -- (488.10,201) .. controls (482.80,201) and (478.50,196.70) .. (478.50,191.40)
  -- cycle;
\draw[fill=black!5,dash pattern={on 0.84pt off 2.51pt}]
  (550.90,37.43) .. controls (550.90,32.13) and (555.19,27.83) .. (560.50,27.83)
  -- (614.96,27.83) .. controls (620.26,27.83) and (624.56,32.13) .. (624.56,37.43)
  -- (624.56,66.23) .. controls (624.56,71.54) and (620.26,75.83) .. (614.96,75.83)
  -- (560.50,75.83) .. controls (555.19,75.83) and (550.90,71.54) .. (550.90,66.23)
  -- cycle;

\draw[relation]
  (125.33,119) .. controls (129.00,102.50) and (144.50,82.50) .. (155.80,74.20);
\draw[relation]
  (221.33,119) .. controls (221.00,102.50) and (220.30,84.00) .. (219.03,74.20);
\draw[relation]
  (205.83,119) .. controls (205.90,132.50) and (210.30,146.00) .. (217.00,153.80);
\draw[relation]
  (285.83,119) .. controls (285.70,132.50) and (283.50,146.00) .. (279.83,153.80);
\draw[relation]
  (331.33,119) .. controls (331.45,102.50) and (331.60,83.00) .. (331.80,72.20);
\draw[relation]
  (424.33,119) .. controls (420.00,102.50) and (406.50,82.50) .. (395.03,72.20);
\draw[relation]
  (461.83,119) .. controls (462.00,133.00) and (473.00,147.00) .. (485.20,155.30);
\draw[relation]
  (551.14,119) .. controls (551.00,133.00) and (545.00,147.00) .. (537.94,155.30);
\draw[relation]
  (591.50,119) .. controls (580.00,103.50) and (563.50,84.50) .. (557.60,73.50);
\draw[relation]
  (599.23,119) .. controls (603.00,103.50) and (613.50,84.50) .. (617.86,73.50);

\foreach \x in {79.25,86.875,94.50,125.33,205.83,221.33,285.83,331.33,424.33,461.83,551.14,583.77,591.50,599.23}
  \fill (\x,119) circle (1.8pt);

\node[anchor=north west,inner sep=0.75pt] at (67.67,75.00) {$P$};
\node[anchor=north west,inner sep=0.75pt] at (125.50,35.50) {$D_1$};
\node[anchor=north west,inner sep=0.75pt] at (189.00,166.50) {$D_2$};
\node[anchor=north west,inner sep=0.75pt] at (303.50,34.50) {$D_3$};
\node[anchor=north west,inner sep=0.75pt] at (458.00,162.00) {$D_4$};
\node[anchor=north west,inner sep=0.75pt] at (527.08,40.14) {$D_5$};

\node[anchor=south east,inner sep=0.2pt,font=\tiny]
  at (122.80,116.20) {$\overline{u_1}$};
\node[anchor=south west,inner sep=0.2pt,font=\tiny]
  at (223.80,116.20) {$\overline{v_1}$};
\node[anchor=south east,inner sep=0.2pt,font=\tiny]
  at (203.30,116.20) {$\overline{u_2}$};
\node[anchor=south west,inner sep=0.2pt,font=\tiny]
  at (288.30,116.20) {$\overline{v_2}$};
\node[anchor=south east,inner sep=0.2pt,font=\tiny]
  at (328.80,116.20) {$\overline{u_3}$};
\node[anchor=south west,inner sep=0.2pt,font=\tiny]
  at (426.80,116.20) {$\overline{v_3}$};
\node[anchor=south east,inner sep=0.2pt,font=\tiny]
  at (459.30,116.20) {$\overline{u_4}$};
\node[anchor=south west,inner sep=0.2pt,font=\tiny]
  at (553.60,116.20) {$\overline{v_4}$};

\node[anchor=north,inner sep=0.4pt,font=\scriptsize] at (102.29,124.00) {$Q_1$};
\node[anchor=north,inner sep=0.4pt,font=\scriptsize] at (308.58,124.00) {$Q_2$};
\node[anchor=north,inner sep=0.4pt,font=\scriptsize] at (443.08,124.00) {$Q_3$};
\node[anchor=north,inner sep=0.4pt,font=\scriptsize] at (575.19,124.00) {$Q_4$};

\node[anchor=base,inner sep=0pt,font=\scriptsize] at (71.50,109.00) {$a$};
\node[anchor=base,inner sep=0pt,font=\scriptsize] at (86.875,109.00) {$a^+$};
\node[anchor=base,inner sep=0pt,font=\scriptsize] at (102.25,109.00) {$c$};
\node[anchor=base,inner sep=0pt,font=\scriptsize] at (571.00,109.00) {$d$};
\node[anchor=base,inner sep=0pt,font=\scriptsize] at (591.50,109.00) {$b^-$};
\node[anchor=base,inner sep=0pt,font=\scriptsize] at (612.00,109.00) {$b$};
\end{tikzpicture}

\end{minipage}
}
\caption{Proof of Lemma~\ref{lem:P-split}.}
\label{fig:P-split}
\end{figure}

We claim that if $u_i \leq_P d$ and $v_i \geq_P c$, then
$P[\overline{u_i},\overline{v_i}]$ is an $(H,1/13)$-amplifier.
If $\overline{u_i}<_P u_i$, then
$c\leq_P\overline{u_i}<_P u_i\leq_P d$, and thus
$|P[a,\overline{u_i}]|,|P[u_i,b]|\geq 2$. By Lemma~\ref{lem:local-amplifier}, $P[\overline{u_i},u_i]$ is an $(H,1/6)$-amplifier, since the choice of $\overline{u_i}$ ensures that, for every $v\in V(P[\overline{u_i},u_i])$, the graph $H-v$ contains two disjoint $(P[a,v),P(v,b])$-paths.
The symmetric argument shows that
$P[v_i,\overline{v_i}]$ is also an $(H,1/6)$-amplifier whenever $v_i<_P\overline{v_i}$.
Recall that $P[u_i,v_i]$ is an $(H,1)$-amplifier.
Write $\ell_1,\ell_2,\ell_3$ for the lengths of the three consecutive
subpaths $P[\overline{u_i},u_i]$, $P[u_i,v_i]$, and
$P[v_i,\overline{v_i}]$. Then there are cycles in $H$ containing at least
$\ell_1/6$, $\ell_2$, and $\ell_3/6$ edges of these 
subpaths, respectively. Since $\max\{\ell_1/6,\ell_2,\ell_3/6\}
\geq(\ell_1+\ell_2+\ell_3)/13$, one of the three cycles
contains at least $1/13$ of the edges of
$P[\overline{u_i},\overline{v_i}]$, proving the claim.

Let $\mathcal U_P$ be the set of distinct paths
$P[\overline{u_i},\overline{v_i}]$ provided by the claim, as $i$ ranges
over the indices in $[\ell]$ satisfying $u_i\leq_P d$ and
$v_i\geq_P c$. Then $|\mathcal U_P|\leq\ell$.
Delete all edges in $E(\mathcal U_P)$ from $P$, and let
$\mathcal V_P$ be the collection of nontrivial paths that
remain. As shown in Figure~\ref{fig:P-split}(II), all paths in $\mathcal V_P$
are drawn in red. Since $|\mathcal U_P|\leq\ell$, we have
$|\mathcal V_P|\leq\ell+1$, and the paths in
$\mathcal U_P\cup\mathcal V_P$ cover $E(P)$.
Moreover, for each $Q=P[x,y]\in\mathcal V_P$, where $x<_P y$, we have
\[
x \in \{\overline{v_i} : \overline{v_i} \geq_P c\} \cup \{a\},
\qquad
y \in \{\overline{u_i} : \overline{u_i} \leq_P d\} \cup \{b\}.
\]

We claim that if $x$ is the left endpoint of some $Q\in\mathcal V_P$,
then there is a set $Z_x$ of at most three vertices that contains $x$
and separates $P[a,x]$ from $P[x,b]$ in $H$.
If $x=a$, then $\{a\}$ is such a set. Otherwise,
$x=\overline{v_i}$ for some $i$. If $x\geq_Pd$, then
$V(P[d,b])$ has the required property. It remains to consider
$x<_Pd$.
If $H-v_i$ has no two disjoint
$(P[a,v_i),P(v_i,b])$-paths, then $x=v_i$. By Menger's theorem in
$H-x$, there is a cut-vertex separating these two
sets in $H-x$. Hence the union of this cut-vertex and $x$ forms a set of size at most two separating
$P[a,x]$ from $P[x,b]$ in $H$.
In the remaining case, let $x^+$ be the successor of $x$ on $P$. Since
$x^+\in V(P[v_i,d])$, the maximality of
$x=\overline{v_i}$ implies that $H-x^+$ has no two disjoint
$(P[a,x^+),P(x^+,b])$-paths. By Menger's theorem, there is a cut-vertex separating these sets in $H-x^+$. It follows that the union of this cut-vertex and 
$\{x,x^+\}$ forms a set of size at most three separating $P[a,x]$
from $P[x,b]$ in $H$.
This proves the claim.
By symmetry, for every right endpoint $y$ of a path in
$\mathcal V_P$, there is a set $Z_y$ of at most three vertices that
contains $y$ and separates $P[a,y]$ from $P[y,b]$ in $H$.

For each
$Q=P[x,y]\in\mathcal V_P$, choose the sets $Z_x$ and $Z_y$ as above, and
let $S_P$ be the union of $\{a,a^+,b^-,b\}$ and all these sets $Z_x$ and $Z_y$. 
Then $|S_P|\leq6|\mathcal V_P|+4\leq6\ell+10$. 
It suffices to verify~(3).
For every
$Q=P[x,y]\in\mathcal V_P$, since $S_P$ contains $Z_x$ and $Z_y$,  $S_P$ separates $V(Q)$ from
$V(P-Q)$ in $H$.
It remains to show that $S_P$ separates $Q$ from all other paths in $\mathcal{P}_H\setminus\{P\}$ in $H$.
Let $R\in \mathcal{P}_H\setminus\{P\}$ lie in $D_i$. Suppose for contradiction that some
$(Q,R)$-path avoids $S_P$.
Then this path contains a $(P,R)$-subpath in $H-S_P$.
Let $z$ be the endpoint on $P$ of this $(P,R)$-subpath.
Then $z$ has a neighbor in $D_i$, and hence $z\in  V(P[u_i,v_i])$.
The complementary part of the $(Q,R)$-path above is a $(Q,z)$-path avoiding $S_P$. Since $S_P$ separates $Q$ from $P-Q$, we have
$z\in V(Q)$. Moreover, $z$ is not an endpoint of $Q$, since both endpoints $x$ and $y$ belong to $S_P$.
When $u_i\le_Pd$ and $v_i\ge_Pc$, this is impossible since the interior of $Q$ is disjoint from $V(P[u_i,v_i])$.
In the remaining case, we must have $z\in \{a,a^+,b^-,b\}\subseteq S_P$, also a contradiction.
Thus, $S_P$ separates $Q$ from every other path in $\mathcal P_H$, completing the
proof of Lemma~\ref{lem:P-split}.
\end{proof}

\begin{proof}[\normalfont\textbf{Proof of Lemma~\ref{lem:path-cover}}]
We first prove that
\begin{equation}\label{eq:component-sum}
 \sum_{\substack{H\in\mathcal H\\|\mathcal P_H|\geq2}}
       \ \sum_{P\in\mathcal P_H}|\mathcal D_P|\leq10m.
\end{equation}
Fix $H\in\mathcal H$ with $r:=|\mathcal P_H|\geq2$. Let $\mathcal F$
be the auxiliary graph obtained from $H$ by contracting every path
$P\in\mathcal P_H$ to a vertex $x_P$, deleting all loops and parallel edges. Consider its block-cut tree $T(\mathcal F)$.

For each $P\in\mathcal P_H$, assign weight one to the vertex
$x_P$ of $T(\mathcal F)$ if $x_P$ is a cut-vertex of $\mathcal F$, and otherwise to the
unique block containing $x_P$, viewed as a vertex of $T(\mathcal F)$. Note that a cut-vertex must have weight one, while a block may receive weight more than one.
This gives a weight for each vertex in $T(\mathcal F)$, and the total weight is $r$.

Since contracting the paths in $\mathcal P_H\setminus\{P\}$ neither splits nor merges the components of $H-P$, the components in $\mathcal D_P$ are in bijection with the
components of $\mathcal F-x_P$ that contain some vertex $x_Q$ with $Q\in\mathcal P_H\setminus\{P\}$. 
Hence, if $x_P$ is a cut-vertex of
$\mathcal F$, then $|\mathcal{D}_P|$ equals the number of components of
$T(\mathcal F)-x_P$ that contain a vertex with positive weight; and if $x_P$ is not a cut-vertex, then
$|\mathcal D_P|=1$.
Let $T_0$ be the minimal subtree of $T(\mathcal F)$ containing all vertices with positive weight. It follows from the preceding correspondence
that whenever $x_P$ is a cut-vertex of $\mathcal F$, we have
\begin{align}\label{eq:Dp}
    |\mathcal D_P|=\deg_{T_0}(x_P).
\end{align}

If $T_0$ is a single vertex, then $T_0$ represents a block since the total weight is $r\geq 2$. Thus no $x_P$ is a cut-vertex, and $\sum_{P\in\mathcal P_H}|\mathcal D_P|=r\leq2r-2$.
Now suppose that $|V(T_0)|\geq2$. Let
$\mathcal C_0:=\{x_P:P\in\mathcal P_H,\ x_P\text{ is a cut-vertex of }\mathcal F\}$,
let $c:=|\mathcal C_0|$, and let $b$ be the number of blocks (each as a vertex) in $T_0$ with
positive weight.
Since each of these $b+c$ vertices has positive weight, we have $b+c\leq r$.
Moreover, every vertex of $T_0$
with weight zero has degree at least two in $T_0$, since the minimality of $T_0$ implies that such a vertex is contained in a subpath between two vertices with positive weight.
Hence,
\[
\begin{aligned}
 \sum_{\substack{P\in\mathcal P_H\\x_P\in \mathcal C_0}}
       \deg_{T_0}(x_P)
 &=2\bigl(|V(T_0)|-1\bigr)
   -\sum_{z\in V(T_0)\setminus\mathcal C_0}\deg_{T_0}(z)\\
 &\leq
   2\bigl(|V(T_0)|-1\bigr)
   -b-2\bigl(|V(T_0)|-c-b\bigr)=2c+b-2.
\end{aligned}
\]
Together with~\eqref{eq:Dp}, this and the fact that $x_P$ is not a cut-vertex for $r-c$ paths $P\in\mathcal{P}_H$ imply that
\[
\begin{aligned}
 \sum_{P\in\mathcal P_H}|\mathcal D_P|
 &=\sum_{\substack{P\in\mathcal P_H\\x_P\in \mathcal C_0}}
       \deg_{T_0}(x_P)+(r-c)\leq (2c+b-2)+(r-c)
 \leq 2r-2.
\end{aligned}
\]
Recall that $|\mathcal{P}|\leq 5m$ by Theorem~\ref{thm:del1cut}. Summing over all $H\in\mathcal H$ with $|\mathcal P_H|\geq2$ gives
\[
\begin{aligned}
 \sum_{\substack{H\in\mathcal H\\|\mathcal P_H|\geq2}}
       \ \sum_{P\in\mathcal P_H}|\mathcal D_P|\leq
 \sum_{\substack{H\in\mathcal H\\|\mathcal P_H|\geq2}}
       \bigl(2|\mathcal P_H|-2\bigr)\leq2\sum_{H\in\mathcal H}|\mathcal P_H|
 =2|\mathcal P|\leq10m,
\end{aligned}
\]
which proves~\eqref{eq:component-sum}.

Define $\mathcal S'$ to be the union of the following (possibly intersecting) vertex sets:
\begin{itemize}
    \item[(i)] the vertex set consisting of the trivial components of $Y-X$;
    \item[(ii)] the set $\mathcal S$ from
    Theorem~\ref{thm:del1cut}; and
    \item[(iii)] the sets $S_P$ from Lemma~\ref{lem:P-split}, for all
    $H\in\mathcal H$ with $|\mathcal P_H|\geq2$ and all
    $P\in\mathcal P_H$ with $|P|\geq5$.
\end{itemize}
The vertices in (i) contribute at most $m$ to $|\mathcal S'|$,
and by Theorem~\ref{thm:del1cut}, vertices in~(ii) contribute at most $4m$. For (iii), Lemma~\ref{lem:P-split} and
\eqref{eq:component-sum} give a total contribution of at most
$6\cdot10m+10|\mathcal P|\leq110m$.
Hence,
\begin{align}\label{eq:S'}
    |\mathcal S'|
 \leq m+4m+110m=115m.
\end{align}

We construct $\mathcal U$ and $\mathcal V$ as follows.
\begin{itemize}
    \item If $H\in\mathcal H$ satisfies $|\mathcal P_H|=1$, then add the
    unique path in $\mathcal P_H$ to $\mathcal V$.
    \item If $|\mathcal P_H|\geq2$ and
    $P\in\mathcal P_H$ has length at least five, then add the paths in
    $\mathcal U_P$ to $\mathcal U$ and those in $\mathcal V_P$ to
    $\mathcal V$.
\end{itemize}
Then only paths $P\in\mathcal P$ of length at most four may contain
edges not covered by paths in $\mathcal U\cup\mathcal V$. Therefore,
\[
 \bigl|E(\mathcal P)\setminus
 (E(\mathcal U)\cup E(\mathcal V))\bigr|
 \leq4|\mathcal P|\leq20m,
\]
which proves (1).

For every $H\in\mathcal H$ with $|\mathcal P_H|\geq2$ and every
$P\in\mathcal P_H$ with $|P|\geq5$, Lemma~\ref{lem:P-split} adds at
most $|\mathcal D_P|$ paths to $\mathcal U$ and at most
$|\mathcal D_P|+1$ paths to $\mathcal V$. Each $H\in\mathcal H$ with
$|\mathcal P_H|=1$ contributes one further path to $\mathcal V$.
Consequently, \eqref{eq:component-sum} gives
\[
 |\mathcal U|
 \leq\sum_{\substack{H\in\mathcal H\\|\mathcal P_H|\geq2}}
       \sum_{\substack{P\in\mathcal P_H\\|P|\geq5}}
       |\mathcal D_P|
 \leq10m
\]
and
\[
 |\mathcal V|
 \leq
 \sum_{\substack{H\in\mathcal H\\|\mathcal P_H|\geq2}}
 \sum_{\substack{P\in\mathcal P_H\\|P|\geq5}}
 \bigl(|\mathcal D_P|+1\bigr)
 +\sum_{\substack{H\in\mathcal H\\|\mathcal P_H|=1}}1
 \leq\sum_{\substack{H\in\mathcal H\\|\mathcal P_H|\geq2}}
       \sum_{\substack{P\in\mathcal P_H\\|P|\geq5}}
       |\mathcal D_P|+|\mathcal P|
 \leq10m+5m=15m.
\]
Together with~\eqref{eq:S'}, this proves (2); property
(3) follows from Lemma~\ref{lem:P-split}(2).

To prove (4), let $Q\in\mathcal V$ be contained in $H\in\mathcal H$.
Since $\mathcal S\subseteq\mathcal S'$, Theorem~\ref{thm:del1cut}(3) shows that
$\mathcal S'$ separates $Q$ from every path in
$\mathcal P\setminus\mathcal P_H$ in $G_X$. If
$|\mathcal P_H|=1$, this proves (4). If
$|\mathcal P_H|\geq2$, Lemma~\ref{lem:P-split}(3) shows that
$\mathcal S'$ separates $Q$ from
$V(\mathcal P_H)\setminus V(Q)$ in $H$. By Theorem~\ref{thm:del1cut}(4), $H$ is a block of $G_X$. It follows from Lemma~\ref{lem:block-convexity}(1) that $Q$ and
$V(\mathcal P_H)\setminus V(Q)$ are also separated in $G_X$.
This proves (4) and completes the
proof of Lemma~\ref{lem:path-cover}. 
\end{proof}

\subsection{The large-circumference case of Theorem~\ref{thm:smith-linear}}\label{sec:main proof 1}

In this subsection, we aim to prove Theorem~\ref{thm:smith-linear} assuming $c(G)>23mk$ (i.e., Lemma~\ref{lem:large-circ}).
In addition to the notation introduced at the beginning of this section, let the collections $\mathcal{U},\mathcal{V},\mathcal{S}'$ be as given by Lemma~\ref{lem:path-cover}.

The next lemma is derived from the maximality of $|X|$ and $|Y|$.

\begin{lemma}\label{lem:XY-path}
For every $P\in\mathcal Y$, there are at most $2m$ disjoint $(X,Y)$-paths whose endpoint on $Y$ lies in $V(P)$.
\end{lemma}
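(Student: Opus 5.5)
The plan is a pigeonhole argument followed by a standard double cycle-switching argument that uses the maximality of both $|X|$ and $|Y|$.

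Suppose, for a contradiction, that there are $2m+1$ pairwise disjoint $(X,Y)$-paths $I_1,\dots,I_{2m+1}$. Let $I_s$ have endpoint $x_s\in V(X)$ and endpoint $y_s\in V(P)$. By disjointness the $x_s$ are distinct, and so are the $y_s$.

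\emph{Pigeonhole step.} At most $m$ of the $x_s$ lie in $V(X)\cap V(Y)$, so at least $m+1$ of them lie in $V(X)\setminus V(Y)$. Deleting the $m\ge 2$ common vertices from the cycle $X$ leaves at most $m$ components, namely the arcs of $X-Y$. Hence two of these endpoints, say $x_i$ and $x_j$, lie in the same arc of $X-Y$. Let $X[x_i,x_j]$ be the subpath of that arc between them, so that all its vertices avoid $Y$. Let $P[y_i,y_j]$ be the subpath of $P$ between $y_i$ and $y_j$; it is a subpath of $Y$ avoiding $X$. Since $x_i\notin Y$ and $y_i\in V(P)\subseteq V(G)\setminus V(X)$, the paths $I_i$ and $I_j$ are nontrivial, so $|I_i|,|I_j|\ge 1$.

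\emph{Switching step.} Define
\[
X':=(X-E(X[x_i,x_j]))\cup I_i\cup P[y_i,y_j]\cup I_j,
\]
with the interior vertices of $X[x_i,x_j]$ removed, and symmetrically
\[
Y':=(Y-E(P[y_i,y_j]))\cup I_i\cup X[x_i,x_j]\cup I_j.
\]
I will check that both are cycles. The interiors of $I_i,I_j$ avoid $X\cup Y$ and each other. The path $P[y_i,y_j]$ avoids $X$, and $X[x_i,x_j]$ avoids $Y$, so no unwanted vertex repetitions occur. Then
\[
|X'|+|Y'|=|X|+|Y|+2(|I_i|+|I_j|)>|X|+|Y|=2c(G),
\]
so one of $X',Y'$ is a cycle longer than $c(G)$, which is a contradiction.

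The only delicate point, and the main obstacle, is the pigeonhole count. Endpoints on $X\cap Y$ must be discarded, because the switch needs the $X$-segment to avoid $Y$ entirely; this is exactly what produces the bound $2m$ rather than $m$. The remaining verification, that the switched graphs are genuine cycles, is routine once disjointness is tracked as above.
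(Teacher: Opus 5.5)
Your proof is correct and matches the paper's: discard the at most $m$ paths starting in $V(X)\cap V(Y)$, pigeonhole two of the remaining $m+1$ into the same component of $X-Y$, and switch segments between $X$ and $Y$. The only difference is cosmetic: the paper assumes by symmetry that $|X[x_i,x_j]|\le|P[y_i,y_j]|$ and performs a single switch, whereas you perform both switches and add the two lengths, which is equivalent.
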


\begin{proof}
Suppose for contradiction that $2m+1$ such paths have their endpoints
on $Y$ in $V(P)$.
Then at least $m+1$ of them avoid all vertices in $V(X)\cap V(Y)$. 
By the pigeonhole principle, there are two of them, say $L_1$ and $L_2$, that connect the
same subpath $R$ of $X-Y$ to $P$. 
Write their endpoints as
$u_1,u_2\in V(R)$ and $v_1,v_2\in V(P)$. By symmetry between $X$ and
$Y$, we may assume that $|R[u_1,u_2]|\leq |P[v_1,v_2]|$. Replacing
$R[u_1,u_2]$ in $X$ by $L_1\cup P[v_1,v_2]\cup L_2$ gives a cycle
longer than $X$, a contradiction.
\end{proof}

We next bound the lengths of the paths in $\mathcal U$ and
$\mathcal V$.

\begin{lemma}\label{lem:cover-lengths}
Suppose that $k>117m$. Then
\begin{itemize}
    \item[(1)] Every $P\in\mathcal U$ satisfies
    $|P|\leq\max\{26c(G)/k,k\}$.
    \item[(2)] Every $Q\in\mathcal V$ satisfies $|Q|\leq117m$.
\end{itemize}
\end{lemma}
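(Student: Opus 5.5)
For item (1), I will combine the amplifier property with Theorem~\ref{thm:del1cut}(5), following the model case of Section~\ref{sec:sketch}. For item (2), I will combine the separation property of Lemma~\ref{lem:path-cover}(4) with Menger's theorem and Lemma~\ref{lem:XY-path}.

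\emph{Item (1).} Let $P\in\mathcal U$. By Lemma~\ref{lem:path-cover}(3), $P$ lies in some $H\in\mathcal H$ with $|\mathcal P_H|\ge 2$, and $P$ is an $(H,1/13)$-amplifier. Hence $c(H)\ge |P|/13$.

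If $|V(H)|<k$, then $|P|\le |V(H)|-1<k$ and we are done. So assume $|V(H)|\ge k$. We also have $|V(X)|=c(G)\ge k$ by Theorem~\ref{thm:dirac}. Since $G$ is $k$-connected, Menger's theorem gives $k$ disjoint $(X,H)$-paths. Let their endpoints on $X$ be $u_1,\dots,u_k$ in cyclic order, with $u_{k+1}:=u_1$.

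For each $i$, the paths ending at $u_i$ and $u_{i+1}$ are two disjoint $(X,H)$-paths. Theorem~\ref{thm:del1cut}(5) applies because $H$ contains at least two paths of $\mathcal P$, so $\operatorname{dist}_X(u_i,u_{i+1})\ge c(H)/2\ge |P|/26$. Summing around $X$ gives $c(G)=|X|\ge k|P|/26$, that is, $|P|\le 26c(G)/k$.

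\emph{Item (2).} Let $Q\in\mathcal V$, and suppose for contradiction that $|Q|\ge 117m+1$. Set $t:=117m+1$. Then $|V(Q)|\ge t$, and $t\le k$ by the assumption $k>117m$. Moreover $|V(X)|\ge k\ge t$, and $V(Q)\cap V(X)=\emptyset$.

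Since $G$ is $k$-connected, Menger's theorem gives $t$ disjoint $(V(Q),X)$-paths in $G$. Since $|\mathcal S'|\le 115m$ by Lemma~\ref{lem:path-cover}(2), at least $2m+1$ of these paths avoid $\mathcal S'$.

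Take such a path $L$ and let $L'$ be its portion before its endpoint on $X$; this portion lies in $G_X$. I claim $L'$ meets $Y$ only inside $V(Q)$. First, every vertex of $Y-X$ lies either in a trivial component of $Y-X$, which belongs to $\mathcal S'$, or in $V(\mathcal P)$, because $E(\mathcal P)=E(Y-X)$ by Theorem~\ref{thm:del1cut}(2). Second, $L'$ starts in $V(Q)$, avoids $\mathcal S'$, and $\mathcal S'$ separates $V(Q)$ from $V(\mathcal P)\setminus V(Q)$ in $G_X$ by Lemma~\ref{lem:path-cover}(4). So $L'$ cannot reach $V(\mathcal P)\setminus V(Q)$.

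Now let $y$ be the last vertex of $L$ on $Y$ before its endpoint on $X$. The terminal segment of $L$ from $y$ to $X$ is an $(X,Y)$-path whose endpoint on $Y$ lies in $V(Q)\subseteq V(P)$, where $P\in\mathcal Y$ is the path containing $Q$. The endpoint on $X$ may also lie on $Y$; this is permitted by the definition of an $(X,Y)$-path.

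These $2m+1$ terminal segments are pairwise disjoint, which contradicts Lemma~\ref{lem:XY-path}. Hence $|Q|\le 117m$.

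The step needing the most care is checking that every vertex of $Y-X$ outside $V(Q)$ is blocked, either by $\mathcal S'$ or by the separation in Lemma~\ref{lem:path-cover}(4). This is exactly why $\mathcal S'$ was built to contain the trivial components of $Y-X$. The constant $117=115+2$ is chosen precisely to leave $2m+1$ paths after removing those that meet $\mathcal S'$.
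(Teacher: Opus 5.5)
Your proposal is correct and follows the paper's argument. For (1) you combine the amplifier property with Theorem~\ref{thm:del1cut}(5) and sum around $X$; for (2) you use Menger, the separation by $\mathcal S'$, and Lemma~\ref{lem:XY-path}. The differences are cosmetic: in (1) you split on whether $|V(H)|\ge k$ and take $(X,H)$-paths directly, instead of using $|V(P)|>k$ to get $(X,P)$-paths first; and in (2) your ``last vertex on $Y$'' step is redundant, because a $(V(Q),X)$-path already has its interior disjoint from $Q$, so $L$ itself is the required $(X,Y)$-path.
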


\begin{proof}
Suppose~(1) fails for some $P\in\mathcal U$, and let $H\in\mathcal H$ contain $P$.
By Lemma~\ref{lem:path-cover}(3), we have
\[
    c(H)\geq\frac{|P|}{13}
       >\frac{2c(G)}{k}.
\]
Moreover, $|V(P)|>|P|>k$. By Theorem~\ref{thm:dirac},
$|V(X)|\geq k$, so
Menger's theorem gives $k$ disjoint $(X,P)$-paths in $G$. Each of these $(X,P)$-paths contains an
$(X,H)$-subpath with the same endpoint on $X$. List their endpoints on $X$ as
$x_1,\ldots,x_k$ in cyclic order on $X$, where $x_{k+1}:=x_1$. For every $i\in[k]$,
Theorem~\ref{thm:del1cut}(5) gives
$\operatorname{dist}_X(x_i,x_{i+1})\geq c(H)/2>c(G)/k$.
Summing over $i$ gives $c(G)=|X|>c(G)$, a contradiction.

For (2), suppose for contradiction that some $Q\in\mathcal V$ has $|Q|>117m$. Since
$k>117m$, Menger's theorem gives $117m+1$ disjoint $(X,Q)$-paths. At
least $117m+1-|\mathcal S'|\geq2m+1$ of them avoid
$\mathcal S'$. 
By Lemma~\ref{lem:path-cover}(2), these paths avoid the vertex of every trivial paths in $\mathcal Y$, since they avoid $\mathcal S'$.
By Lemma~\ref{lem:path-cover}(4), they also avoid
$V(\mathcal P)\setminus V(Q)$. 
Therefore, these $2m+1$ paths are indeed $(X,Y)$-paths whose endpoints on $Y$ all lie in the path in $\mathcal{Y}$ that contains $Q$.
This contradicts Lemma~\ref{lem:XY-path} and proves (2).
\end{proof}

We now prove Theorem~\ref{thm:smith-linear} under the assumption $c(G)=\Omega(mk).$

\begin{lemma}\label{lem:large-circ}
If $c(G)>23mk$, then $m\geq k/600$.
\end{lemma}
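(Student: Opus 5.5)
We argue by contradiction: assume $c(G)>23mk$ and $m<k/600$. In particular $k>600m>117m$, so Lemma~\ref{lem:cover-lengths} applies. The plan is to show that the paths of $\mathcal U\cup\mathcal V$, together with the leftover edges from Lemma~\ref{lem:path-cover}(1), are too short to account for all of $E(Y-X)$. Counting edges of $Y$ will then give the contradiction.

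The first step is to count the edges. Deleting the $m$ vertices of $V(X)\cap V(Y)$ removes at most $2m$ edges of $Y$. By Theorem~\ref{thm:del1cut}(2), $E(\mathcal P)=E(Y-X)$, so
\[
|E(\mathcal P)|\geq c(G)-2m.
\]
By Lemma~\ref{lem:path-cover}(1), at most $20m$ of these edges lie outside $E(\mathcal U)\cup E(\mathcal V)$. Hence
\[
c(G)-22m\leq \sum_{P\in\mathcal U}|P|+\sum_{Q\in\mathcal V}|Q|.
\]

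The second step is to bound the right-hand side. Lemma~\ref{lem:path-cover}(2) gives $|\mathcal V|\leq 15m$, and Lemma~\ref{lem:cover-lengths}(2) gives $|Q|\leq117m$ for each $Q\in\mathcal V$. So the $\mathcal V$-contribution is at most $1755m^2$. Lemma~\ref{lem:path-cover}(2) also gives $|\mathcal U|\leq10m$, and Lemma~\ref{lem:cover-lengths}(1) gives $|P|\leq\max\{26c(G)/k,k\}$ for each $P\in\mathcal U$. So the $\mathcal U$-contribution is at most $260mc(G)/k+10mk$. Combining,
\[
c(G)\leq 22m+1755m^2+\frac{260m}{k}c(G)+10mk.
\]

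The third step is to plug in the hypotheses. Since $m<k/600$, we have $260m/k<260/600<1/2$, so
\[
c(G)<2\bigl(22m+1755m^2+10mk\bigr).
\]
Using $m<k/600$ again gives $1755m^2<3mk$ and $22m<mk$. Therefore $c(G)<2\cdot14mk=28mk$.

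This is not yet below $23mk$, so the main obstacle is the constant bookkeeping. The bounds must be sharpened so that the final estimate lands strictly below $23mk$. A careful version keeps the exact fraction $1-260m/k>1-260/600=34/60$. Then
\[
c(G)<\frac{60}{34}\Bigl(10+\frac{1755}{600}+\frac{22}{600}\Bigr)mk\approx\frac{60}{34}\cdot12.96\,mk\approx22.9\,mk<23mk.
\]
This contradicts the assumption $c(G)>23mk$. I therefore expect the constants $23$ and $600$ to have been chosen exactly to make this inequality close. Hence $m\geq k/600$.
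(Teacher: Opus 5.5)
Your proof is correct and follows the paper's own argument: the same edge count $c(G)-2m\le|E(\mathcal U)|+|E(\mathcal V)|+20m$, with the path counts from Lemma~\ref{lem:path-cover} and the length bounds from Lemma~\ref{lem:cover-lengths}. The only difference is the final arithmetic. The paper uses $13mk<\tfrac{13}{23}c(G)$ to reach $c(G)<\tfrac{689}{690}c(G)$, whereas you solve for $c(G)$ and get $c(G)<\tfrac{7777}{340}mk<23mk$; this is exact, not approximate, since $7777<7820$, and your step $22m<\tfrac{22}{600}mk$ is valid because $k>600m\ge 600$.
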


\begin{proof}
Suppose for contradiction that $k>600m$. In particular,
$k>117m$, so Lemma~\ref{lem:cover-lengths} applies. By Theorem~\ref{thm:del1cut}(2), $|E(\mathcal P)|=|E(Y-X)|\geq c(G)-2m$. The bounds in
Lemmas~\ref{lem:path-cover} and~\ref{lem:cover-lengths} give
\begin{align*}
c(G)-2m
&\leq |E(\mathcal P)|\leq |E(\mathcal U)|+|E(\mathcal V)|+20m\\
&\leq10m\cdot \max\left\{\frac{26c(G)}{k},k\right\}
       +15m\cdot117m+20m\\
&\leq\frac{260m}{k}c(G)+10mk+1755m^2+20m.
\end{align*}
Since $m\geq2$ and $k>600m$, we have $1755m^2+22m<1800m^2<3mk$. Hence
\[
 c(G)<\frac{260m}{k}c(G)+13mk
      <\left(\frac{13}{30}+\frac{13}{23}\right)c(G)
      =\frac{689}{690}c(G),
\]
a contradiction. This proves Lemma~\ref{lem:large-circ}.
\end{proof}

\section{A higher-connectivity extension of Dirac's theorem}\label{sec:5}
In this section, we prove an extension of Dirac's theorem for highly connected graphs $G$ (i.e., Theorem~\ref{thm:small-circ}). 
This provides the final piece needed for the proof of Theorem~\ref{thm:smith-linear}, which we complete in the next section.

The celebrated theorem of Dirac~\cite{dirac1952some}
states that every $2$-connected graph $G$ satisfies
$c(G)\geq\min\{2\delta(G),|V(G)|\}$.
Motivated by this theorem,
Jung~\cite{jung2001degree} conjectured the following sharp generalization for highly connected graphs: if $G$ is $k$-connected with minimum
degree $\delta$, $X$ is a longest cycle in $G$, and $G-X$ contains
a path of length at least $k-2$, then $|X|\geq k(\delta-k+2)$.
The present authors~\cite{ma2026longest} recently proved this conjecture
under the additional assumption $\delta\geq6k$.
The result of~\cite{ma2026longest} is sufficient to complete the proof of Theorem~\ref{thm:smith-linear}.
However, to keep the argument self-contained, we give a shorter proof of the following Dirac-type bound, without attempting to optimize the constants.

\begin{thm}\label{thm:small-circ}
For every integer $k\geq2$, let $G$ be a $k$-connected graph with minimum degree $\delta$ and let $X$ be a longest cycle in $G$. Set $p:=p(G-X)$. Then $|X|>(\min\{k,p\}-2)\cdot \delta/25$.
\end{thm}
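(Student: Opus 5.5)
We set $t:=\min\{k,p\}$. We may assume $t\geq3$, since otherwise the bound is trivial. We also fix a path $R=r_0r_1\cdots r_{t-1}$ in $G-X$ of length $t-1\leq p$, so $R$ has $t$ vertices. The overall plan is the model-case argument of Section~\ref{sec:sketch}: attach $R$ to $X$ by many disjoint paths and use the maximality of $|X|$ to force the attachment points on $X$ to be far apart. Here the distance is supplied by the minimum degree $\delta$ rather than by a long cycle in a component of $G-X$.

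\textbf{Step 1: the fan and the easy distance bound.} Since $|V(R)|=t\leq k$ and $|V(X)|\geq k$ by Theorem~\ref{thm:dirac}, Menger's theorem gives $t$ disjoint $(X,R)$-paths $I_1,\dots,I_t$. Every vertex of $R$ is the endpoint of exactly one $I_j$. Let $x_j$ denote its endpoint on $X$, and list these endpoints in cyclic order along $X$. For two paths $I_j,I_{j'}$ landing at $r_s,r_{s'}$, the path $I_j\cup R[r_s,r_{s'}]\cup I_{j'}$ has interior outside $X$. Maximality of $X$ then gives $\operatorname{dist}_X(x_j,x_{j'})\geq |s-s'|+2$. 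This alone only yields $|X|=\Omega(t)$, so the factor $\delta$ must come from vertex degrees.

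\textbf{Step 2: exploiting degrees.} Fix a vertex $r_s$ and consider the usual Pósa/Dirac-type switching. Among all paths in $G-X$ that use the vertices of $R$ and end at $r_s$, we extend the path greedily, or take a longest path in the component of $G-X$ containing $R$. We then split into two cases. In the first case, many endpoint-type vertices have at least $\delta/2$ neighbours inside $G-X$. Then that component contains long paths, of length about $\delta$, and we can rotate so that consecutive attachment points $x_j,x_{j+1}$ are joined through $G-X$ by a path of length $\Omega(\delta)$. Maximality gives $\operatorname{dist}_X(x_j,x_{j+1})=\Omega(\delta)$, and summing over the $t$ gaps gives $|X|=\Omega(t\delta)$. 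In the second case, many vertices $v\in V(R)$ have at least $\delta/2$ neighbours on $X$. For such a $v$, no two neighbours of $v$ are consecutive on $X$. More strongly, inserting $v$ together with a detour along $R$ and a second attachment path shows that the neighbours of distinct $v$'s, and of the $x_j$'s, interlace with gaps on $X$. A counting argument in the style of Dirac's proof, applied to successors of neighbours on $X$, must show that the sets $N_X(v)^+$ for different $v$ are nearly disjoint. That would give $|X|\geq\Omega(t)\cdot\delta/2$.

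\textbf{Main obstacle.} The hardest part is the second case. Successor-disjointness for two vertices $u,v\in R$ with neighbours $y,z$ on $X$ normally requires a $(u,v)$-path outside $X$ with controlled length. Here the $(u,v)$-path along $R$ may be long, and that breaks the standard exchange. The first thing I would try is to pair up vertices only at bounded distance along $R$, for instance consecutive $r_s,r_{s+1}$, and lose a constant factor by using just every third vertex of $R$. I would charge each neighbour to the gap of $X$ in which it lies, between cyclically consecutive attachment points $x_j$. Exchange arguments would then show that each gap receives neighbours from $O(1)$ vertices of $R$ beyond its length. This charging should give $|X|\geq (t-2)\delta/25$ with room to spare in the constant, and the $-2$ absorbs the boundary vertices $r_0,r_{t-1}$.
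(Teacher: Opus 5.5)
There is a genuine gap. Step~2 is a plan rather than a proof, and Case~1 is worse than unproved: its conclusion can be false.

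Here is an example. Take an independent set $K$ of $\delta-1$ vertices and many disjoint copies of a caterpillar with spine $r_1\cdots r_s$, where $2\le s\ll\delta$ and each spine vertex carries $\delta$ leaves. Join every leaf, but no spine vertex, to all of $K$. This graph has minimum degree $\delta$ and is $(\delta-1)$-connected: deleting at most $\delta-2$ vertices leaves a vertex of $K$ adjacent to every surviving leaf, and every surviving spine vertex keeps a leaf. Since $K$ is independent and $G-K$ is a forest, every longest cycle $X$ runs through all of $K$. Between consecutive vertices of $K$ it uses leaf--spine--leaf paths of distinct copies. So $|X|=(\delta-1)(s+3)$, untouched copies remain in $G-X$, and with $k=\delta-1$ we get $\min\{k,p\}=p=s+1$.

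Any path $R$ on $s+1$ vertices in $G-X$ contains at least $s-1$ spine vertices, and these have no neighbours on $X$. So your Case~2 never occurs. Yet every path in $G-X$ has length at most $s+1$. Hence, whichever vertices Case~1 refers to, no two attachment points are joined through $G-X$ by a path of length $\Omega(\delta)$, and the fan from $R$ certifies only $O(s^2)$. Here $|X|=\Omega(s\delta)$ comes entirely from the $X$-edges of the leaves, i.e.\ from vertices \emph{off} $R$. A fixed path together with a fan launched from it is the wrong object.

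What is missing is the device that captures both kinds of degree at once. The paper takes a longest path $P$ of $G-X$ (not an arbitrary path on $\min\{k,p\}$ vertices) and a maximal comb $T=P\cup\bigcup_iP_i$ of disjoint legs ending at terminals $v_i$. Maximality puts every $H$-neighbour of $v_i$ into $T$ (Lemma~\ref{lem:tree-properties}), so $d_G(v_i,T)+d_G(v_i,X)\geq\delta$.

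The $X$-part is handled by~\eqref{eq:terminal-X-edges}, namely $|X|>\sum_i d_G(v_i,X)$. For cyclically consecutive $x_j,x_{j+1}$, one chooses neighbours that are far apart along $P$ (Lemma~\ref{lem:path-sets}). This is the opposite of your worry in Case~2: a long detour through $G-X$ only forces a longer gap on $X$. The same argument with trivial legs gives $|X|>\sum_{v\in V(R)}d_G(v,X)$ for any path $R$ of $G-X$, so Case~2 needs no successor-disjointness or charging at all.

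The $T$-part is converted into long paths by the codiameter bound $f(B)\geq d_G(v_i,V(B)\cap V(T))/8$ of Lemma~\ref{lem:fB}, which uses that $P$ is a longest path. High degree alone does not give long paths between prescribed vertices: in $K_{2,n}$ the two vertices of the small side have degree $n$ but are joined only by paths of length $2$.

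Finally, the Menger fan is launched from $S$, consisting of the $w=\lfloor\min\{k,p\}/3\rfloor$ terminals $W$ of largest $T$-degree together with their $T$-neighbours. Then every block carrying $T$-edges at $W$ contains two vertices of $S$. Via the block-cut tree, such a block is crossed by at least $|S\cap V(B)|$ of the segments between consecutive entry points. This yields $|X|>(w\delta-|X|/3)/8$, and hence the constant $25$.
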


Throughout the rest of this section, we fix the following conventions. 
Let $G,X,\delta,p$
be as in its statement. If $\min\{k,p\}\leq2$, then the required
bound follows from $|X|>0$. We may therefore assume that
$\min\{k,p\}\geq3$.
Let $G_X:=G-X$, and let $P=u_0u_1\cdots u_p$ be a longest path in $G_X$.
Let $H$ be the component of $G_X$ that contains $P$.
Let $T$ be a subtree of $H$ of the form
$T=P\cup\bigcup_{i=0}^pP_i$, where $P_0,\ldots,P_p$ are pairwise
disjoint paths in $G_X$ and, for each $0\leq i\leq p$, the path $P_i$ connects
$u_i$ to a vertex $v_i \in V(G_X)$ and satisfies
$V(P_i)\cap V(P)=\{u_i\}$. Each $P_i$ may be trivial. Among all such
trees, choose $T$ so that
\begin{itemize}
    \item[(1)] $|V(T)|$ is as large as possible;
    \item[(2)] subject to (1), the number of indices $i$ for which
    $u_i=v_i$ is as small as possible.
\end{itemize}

For a vertex $v$ and a set $U\subseteq V(G)$, let
$N_G(v,U):=N_G(v)\cap U$ and $d_G(v,U):=|N_G(v,U)|$. For
$U\subseteq V(G)$, let
$N_G(U):=(\bigcup_{u\in U}N_G(u))\setminus U$. If $H$ is a subgraph of
$G$, we abbreviate $N_G(v,V(H))$, $d_G(v,V(H))$, and $N_G(V(H))$ to
$N_G(v,H)$, $d_G(v,H)$, and $N_G(H)$, respectively.

The specific choice of $T$ has the following properties.
\begin{lemma}\label{lem:tree-properties}
    For the subtree $T$ chosen above and every $0\leq i\leq p$, the following statements hold.
    \begin{itemize}
        \item[(1)] Every neighbor of $v_i$ in $H$ belongs to $T$, and
        $\deg_T(v_i)\leq2$.
        \item[(2)] If $u_i=v_i$, then
        $d_G(v_i,P_j)\leq2$ for every $j\neq i$.
    \end{itemize}
\end{lemma}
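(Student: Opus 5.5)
Both parts follow from exchange arguments against the extremal choice of $T$: we assume a property fails, build a competing tree of the allowed form, and show it beats $T$ in criterion (1) or, failing that, in criterion (2).

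For (1), let $w$ be a neighbor of $v_i$ in $H$ and suppose $w\notin V(T)$. Extending $P_i$ by the edge $v_iw$ gives the path $P_i':=P_i\cup v_iw$ from $u_i$ to $w$. It is disjoint from the other $P_j$ (since $w\notin V(T)$) and meets $P$ only in $u_i$. Replacing $P_i$ by $P_i'$ therefore yields an admissible tree with one more vertex, contradicting criterion (1). Hence $N_H(v_i)\subseteq V(T)$.

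For the degree bound there are two cases.
\begin{itemize}
\item If $v_i\neq u_i$, then $v_i$ is an endpoint of $P_i$ and is not on $P$. The only $T$-edges at $v_i$ are edges of $P_i$, so $\deg_T(v_i)=1$.
\item If $v_i=u_i$, then $P_i$ is trivial. The $T$-edges at $u_i$ are the at most two $P$-edges, plus the first edges of those $P_j$ that start at $u_i$. But each $P_j$ with $j\neq i$ starts at $u_j\neq u_i$, and $P_i$ is trivial, so no such first edges occur. Hence $\deg_T(v_i)\leq 2$.
\end{itemize}

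For (2), suppose $u_i=v_i$ and $d_G(u_i,P_j)\geq 3$ for some $j\neq i$. Note that $P_j$ must be nontrivial here, since otherwise $V(P_j)=\{u_j\}$ has size one. Orient $P_j$ from $u_j$ to $v_j$ and let $w$ be the neighbor of $u_i$ on $P_j$ that is farthest from $u_j$. Because $u_i$ has at least three neighbors on $P_j$, we have $w\neq u_j$. Moreover, there is a neighbor $w'$ of $u_i$ strictly between $u_j$ and $w$, so $P_j[u_j,w)$ has at least two vertices. We now reroute:
\[
P_j^{\mathrm{new}}:=P_j[u_j,w'],\qquad P_i^{\mathrm{new}}:=u_i\,w\,P_j[w,v_j].
\]
These two paths are disjoint. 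Each still meets $P$ only at its own root $u_j$ or $u_i$, and both lie in $G_X$. Hence the new configuration is an admissible tree.

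The main obstacle is comparing vertex counts. The new tree keeps every vertex of $T$ except those strictly between $w'$ and $w$ on $P_j$, so it is smaller unless $w'$ and $w$ are consecutive on $P_j$. To avoid losing vertices, I choose $w'$ to be the neighbor of $u_i$ immediately preceding $w$ in the order along $P_j$. Then I would first try $P_j^{\mathrm{new}}:=P_j[u_j,w']$ together with $P_i^{\mathrm{new}}:=u_i\,w\,P_j[w,v_j]$; the vertices of $P_j$ strictly between $w'$ and $w$ are still a problem. Instead I switch to the Pósa-type choice $P_j^{\mathrm{new}}:=P_j[u_j,w^-]$ and $P_i^{\mathrm{new}}:=u_i\,w\,P_j[w,v_j]$, where $w^-$ is the predecessor of $w$ on $P_j$. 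This keeps all vertices of $T$, because the edge $w^-w$ is replaced by the edge $u_iw$ and no vertex is dropped.

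The case $w^-=u_j$ makes $P_j^{\mathrm{new}}$ trivial, which could worsen criterion (2). This is excluded because $u_i$ has at least two neighbors on $P_j[u_j,w)$, which forces $w$ to be at least two steps from $u_j$. So $P_j^{\mathrm{new}}$ stays nontrivial, while $P_i$ becomes nontrivial. The new tree therefore has the same number of vertices as $T$ and strictly fewer indices with $u_\ell=v_\ell$, contradicting criterion (2).

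In fact this argument only needs two neighbors on $P_j$ other than $u_j$, which the hypothesis $d_G(u_i,P_j)\geq3$ guarantees. This gives the bound $d_G(v_i,P_j)\leq 2$.
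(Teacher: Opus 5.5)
Your proof is correct and follows the paper's argument. Part (1) is the same step of extending $P_i$ by an edge, with the same case split on whether $v_i=u_i$; in part (2), the rerouting you settle on, $P_j[u_j,w^-]$ together with $\{u_iw\}\cup P_j[w,v_j]$ for a neighbor $w$ at distance at least two from $u_j$, is exactly the paper's exchange, which preserves $V(T)$ and strictly decreases the number of indices with $u_\ell=v_\ell$ (the abandoned intermediate attempt via $w'$ is unnecessary and can simply be deleted).
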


\begin{proof}
    If $v_i$ has a neighbor in $H-T$, then adding this neighbor to
    $P_i$ would increase $|V(T)|$, a contradiction. Hence every neighbor of $v_i$ in $H$ belongs to $T$.
    If $v_i\notin V(P)$, then $v_i$ is a leaf of $T$ and $\deg_T(v_i)=1$. Otherwise, $v_i=u_i$ and $P_i$ is trivial, so $\deg_T(v_i)=\deg_P(u_i)\leq2$. This proves (1).

    For (2), suppose for contradiction that $u_i=v_i$ and
    $d_G(v_i,P_j)\geq3$ for some $j\neq i$.
    Then there exists a vertex $w\in N_G(v_i,P_j)$ such that
    $|P_j[u_j,w]|\geq2$.
    Let $w^-$ be the immediate predecessor of $w$ on $P_j$ in the
    linear order from $u_j$ to $v_j$. Replace the pair $(P_j,P_i)$ by
    \[
      \bigl(P_j[u_j,w^-],\ \{v_iw\}\cup P_j[w,v_j]\bigr).
    \]
    These two paths are disjoint, and intersect $P$
    only at $u_j$ and $u_i=v_i$, respectively.
    Since $|P_j[u_j,w]|\geq2$, both of them are nontrivial. This replacement therefore gives another tree with the same vertex set as
    $T$ but fewer indices $h$ satisfying $u_h=v_h$, contradicting
    the choice of $T$.
\end{proof}
For a block of $H$, we introduce the following parameter, which is also called \textit{codiameter} in~\cite{Fan1990}.
\begin{definition}[$f(B)$]\label{def:fb}
For a block $B$ of $H$ with at least two vertices, define $f(B)$ to be the
largest integer such that, for every pair of distinct vertices of $B$, there
is a path in $B$ between them of length at least $f(B)$. Set $f(B):=0$ when $B$ is
an isolated vertex.
\end{definition}

The next lemma bounds this parameter using the neighbors of $v_i$ in
the part of $T$ contained in the block.

\begin{lemma}\label{lem:fB}
For every block $B$ in $H$ and every $0\leq i\leq p$, we have
$f(B)\geq d_G(v_i,V(B)\cap V(T))/8$.
\end{lemma}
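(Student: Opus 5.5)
Write $N:=N_G(v_i,V(B)\cap V(T))$ and $d:=|N|$. The goal is to produce, for an arbitrary pair of distinct vertices $x,y\in V(B)$, an $(x,y)$-path in $B$ of length at least $d/8$. The case $d\leq 8$ is easy: if $B$ has at least two vertices it is an edge or $2$-connected, so any two vertices are joined by a path of length at least $1\geq d/8$. If $B$ is a single vertex, then $d\le 1$ and $f(B)=0$ works unless $d=1$, which needs a separate look; in that case $N$ would lie in a trivial block, which we handle directly. So we assume $d\geq 9$, and then $B$ is $2$-connected.

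By Lemma~\ref{lem:tree-properties}(1), $\deg_T(v_i)\leq 2$. Hence at most two vertices of $N$ are $T$-neighbours of $v_i$, and the remaining $d-2$ are joined to $v_i$ by non-tree edges. The plan is to show that $T\cap B$ contains a long path through many of these neighbours, and that this length can be forced into every $(x,y)$-path.

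\textbf{Step 1: $v_i\in V(B)$.} Suppose $v_i\notin V(B)$. Then all of $N$ is reached from $v_i$ through a single cut-vertex, by Lemma~\ref{lem:block-convexity}(1). But $v_i$ is adjacent to every vertex of $N$, and $|N|\geq 2$ would give a path leaving and re-entering $B$, which is impossible. So either $d\leq 1$, or $v_i\in V(B)$.

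\textbf{Step 2: a long path in $T\cap B$.} Since $T\cap B$ is a forest in which most vertices of $N$ are non-tree neighbours of $v_i$, we look at the tree path $T[v_i,w]$ for each $w\in N$. Consider the vertices of $N$ in the order they occur along the tree paths from $v_i$, using the structure of $T$: the spine $P$ together with the legs $P_j$. There are two cases.

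\emph{Case (a).} Many neighbours lie on a single leg or on $P$. For a neighbour $w$, the edge $v_iw$ together with the tree path $T[v_i,w]$ forms a cycle. Taking the neighbour farthest along the tree, the standard P\'osa-type argument gives a cycle or path through at least about $d/2$ vertices of $N$. Here we use the maximality of $T$ (choice (1)) and of $P$ as a longest path in $G_X$: if the tree path were short, rerouting through $v_i$ would lengthen $P$ or enlarge $T$.

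\emph{Case (b).} The neighbours are spread over many legs $P_j$. Then Lemma~\ref{lem:tree-properties}(2) gives $d_G(v_i,P_j)\leq 2$ when $u_i=v_i$, so the neighbours must spread along the spine, which again yields a path in $B$ of length at least about $d/4$ between two vertices $w_1,w_2\in N$. Joining via $v_i$, we obtain a cycle $C\subseteq B$ of length at least $d/4$.

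\textbf{Step 3: from a long cycle to $f(B)$.} Given a cycle $C$ in the $2$-connected block $B$ with $|C|\geq d/4$, any two vertices $x,y$ of $B$ have two disjoint $(\{x,y\},C)$-paths by Menger's theorem. The case $x=y$ does not occur, and if $x$ or $y$ lies on $C$ the corresponding path is trivial. Combining these two paths with the longer arc of $C$ gives an $(x,y)$-path of length at least $|C|/2\geq d/8$. Hence $f(B)\geq d/8$.

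The main obstacle is Step 2: extracting a cycle of length linear in $d$ from the fact that $v_i$ has $d$ neighbours in $T\cap B$. The first thing to try is to order $N$ along $P$, projecting leg vertices to their feet $u_j$. One then uses the longest-path property of $P$ and the extremal choice of $T$ to show that consecutive neighbours cannot all sit close together, so that some neighbour lies at tree distance $\Omega(d)$ from another. The constants ($8=2\cdot 4$) leave room for losing the at most two tree edges and a factor of $2$ when passing between legs and the spine.
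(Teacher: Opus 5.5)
Your outer framework is the same as the paper's proof. You dispose of $d\le 8$, show $v_i\in V(B)$, produce a cycle of length at least $d/4$ in $B$, and convert it by Menger into $f(B)\ge d/8$. There is a genuine gap, though: Step~2 is the entire substance of the lemma, and what you give there is a heuristic (``about $d/2$'', ``many'', an unspecified ``P\'osa-type argument'') that you yourself flag as the main obstacle.

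The concrete hole is the case $u_i\neq v_i$ with the neighbours of $v_i$ spread over several legs. Your stated tools do not give a linear bound there. If, say, about $\sqrt d$ legs each carry about $\sqrt d$ neighbours, then:
\begin{itemize}
\item the farthest-neighbour cycle along a single tree path has length only about $\sqrt d$;
\item counting distinct feet on the spine also gives only about $\sqrt d$;
\item Lemma~\ref{lem:tree-properties}(2) is unavailable, since $u_i\neq v_i$.
\end{itemize}

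The missing idea is an additive charging argument from the longest-path property of $P$, applied with the \emph{farthest} neighbour on each leg.
\begin{itemize}
\item Set $T_1:=T[V(B)\cap V(T)]$. By Lemma~\ref{lem:block-convexity}(1) this is a subtree, not merely a forest, and that is what puts the cycles below inside $B$.
\item Keep $P_i$ together with one component of $T_1-P_i$; call this $T_2$. It retains at least $d/2$ neighbours, and now all relevant feet lie on one side of $u_i$.
\item List the legs $P_{\alpha_1},\dots,P_{\alpha_r}=P_i$ meeting the neighbourhood in order along the spine. Let $d_j$ be the number of neighbours in $V(P_{\alpha_j})\cap V(T_2)$, where a spine vertex $u_j$ counts as part of $P_j$. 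Let $x_j$ be the neighbour farthest from $u_{\alpha_j}$, so $|P_{\alpha_j}[u_{\alpha_j},x_j]|\ge d_j-1$.
\item The path $P_{\alpha_j}[u_{\alpha_j},x_j]\cup\{x_jv_i,v_ix_{j+1}\}\cup P_{\alpha_{j+1}}[x_{j+1},u_{\alpha_{j+1}}]$ is internally disjoint from $P$ and has length at least $d_j+d_{j+1}$. Maximality of $P$ in $G_X$ therefore forces $|P[u_{\alpha_j},u_{\alpha_{j+1}}]|\ge d_j+d_{j+1}$.
\item Summing over $j$ gives $|P[u_{\alpha_1},u_i]|\ge\sum_j d_j\ge d/2$. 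Then $P[u_{\alpha_1},u_i]\cup P_i\cup\{v_ix_1\}\cup P_{\alpha_1}[x_1,u_{\alpha_1}]$ is the required cycle. The case $r=1$ is your farthest-neighbour argument.
\end{itemize}

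Your Case~(b) for $u_i=v_i$ is essentially the paper's argument: at most two neighbours per $P_j$, hence at least $d/4$ distinct feet. It too needs the one-sided restriction, so that the closing cycle $P[u_{\alpha_1},u_i]\cup P_{\alpha_1}[u_{\alpha_1},x]\cup\{xv_i\}$ exists.

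Two smaller remarks. ``Enlarging $T$'' is not the mechanism here; what is used is the tie-breaking choice of $T$, via Lemma~\ref{lem:tree-properties}(2), and the longest-path property of $P$. Also, your worry about $d=1$ for an isolated-vertex block is vacuous: such a block forces $H=B=\{v_i\}$, so $d=0$.
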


\begin{proof}
Fix a block $B$ and an index $0\leq i\leq p$, and set
$d:=d_G(v_i,V(B)\cap V(T))$.
If $B$ is an isolated vertex, then $d=0$. Suppose that $B$ is not an
isolated vertex. Since $f(B)\geq1$, the result is immediate when
$d\leq8$. We may therefore assume that $d>8$. In particular, $B$ has
at least three vertices and is 2-connected.

We claim that $B$ contains a cycle of length at least $d/4$. We first
show that this claim implies the lemma. Let $C$ be such a cycle, and
let $x,y$ be distinct vertices of $B$. Then Menger's theorem gives two
disjoint $(\{x,y\},C)$-paths in $B$. These two paths, together with the longer subpath of $C$ between their endpoints on $C$, form an $(x,y)$-path in $B$ of length at least
$|C|/2\geq d/8$. Thus $f(B)\geq d/8$.

It remains to prove the claim. If $d\leq12$, any cycle in $B$ has
length at least $3\geq d/4$. Hence assume that $d>12$. By Lemma~\ref{lem:block-convexity}(1), a vertex outside
the block $B$ has at most one neighbor in $B$.
In particular, $v_i\in V(B)$.
By Lemma~\ref{lem:block-convexity}(1) again,
the graph $T_1:=T[V(B)\cap V(T)]$ is a subtree of $T$ containing $v_i$, and $d=d_G(v_i,T_1)$.

Suppose first that $T_1\subseteq P_i$. Choose
$a\in N_G(v_i,T_1)$ maximizing $|T_1[a,v_i]|$. Then $|T_1[a,v_i]|\geq d$. In particular, we have 
$v_ia\notin E(T_1)$ since $d>1$. Since $B$ is induced in $H$, $T_1[a,v_i]\cup\{v_ia\}$ is a cycle in $B$ of length greater
than $d$.
We may now assume that $T_1\nsubseteq P_i$. Since $T_1$
is connected and contains $v_i$, we have $P_i\subseteq T_1$. The graph
$T_1-P_i$ has at most two components. Let $T_2$ be the subtree of
$T_1$ induced by the union of $V(P_i)$ and the vertex set of a component of
$T_1-P_i$ that
contains at least half of the neighbors of $v_i$ in
$V(T_1-P_i)$. Then
\[
 d_G(v_i,T_2)\geq d_G(v_i,P_i)+
 \frac{d-d_G(v_i,P_i)}{2}\geq\frac d2>6.
\]
Let $\alpha_1,\ldots,\alpha_{r-1}$ be all indices $j\neq i$ satisfying
$N_G(v_i,V(P_j)\cap V(T_2))\neq\emptyset$, ordered so that
$u_{\alpha_1},\ldots,u_{\alpha_{r-1}},u_i$ occur in this order along
$T_2\cap P$. Let $\alpha_r:=i$, and, for each $j\in[r]$, let
$d_j:=d_G(v_i,V(P_{\alpha_j})\cap V(T_2))$. Then
$\sum_{j=1}^r d_j=d_G(v_i,T_2)\geq d/2$.

Suppose first that $u_i=v_i$. Then $d_r=0$, and
Lemma~\ref{lem:tree-properties}(2) gives
\begin{align}\label{eq:length1}
    d_G(v_i,T_2)=\sum_{j=1}^{r-1}d_j\leq2(r-1).
\end{align}
In particular, $r\geq5$. Choose
$x\in N_G(v_i,V(P_{\alpha_1})\cap V(T_2))$. The vertices
$u_{\alpha_1},\ldots,u_{\alpha_{r-1}},u_i$ are distinct, and hence
$|P[u_{\alpha_1},u_i]|\geq r-1\geq4$. This implies 
$xv_i\notin E(P)$, and thus
\[
 C:=P[u_{\alpha_1},u_i]\cup
 P_{\alpha_1}[u_{\alpha_1},x]\cup\{xv_i\}
\]
is a cycle in $B$. By~\eqref{eq:length1}, 
\[
 |C|\geq |P[u_{\alpha_1},u_i]|\geq  r-1\geq\frac{d_G(v_i,T_2)}2\geq\frac d4.
\]

It remains to consider $u_i\neq v_i$. If $r=1$, all neighbors of
$v_i$ in $T_2$ lie on $P_i$. Choose
$a\in N_G(v_i,P_i)$ maximizing $|P_i[a,v_i]|$. Then
$|P_i[a,v_i]|\geq d_G(v_i,T_2)$. Therefore
$P_i[a,v_i]\cup\{v_ia\}$ is a cycle in $B$ of length greater than
$d_G(v_i,T_2)\geq d/2$.
Now suppose that $r\geq2$. Since $P_i$ is nontrivial and
$P_i\subseteq T_2$, we have $d_r\geq1$. For each $j\in[r]$, choose
$x_j\in N_G(v_i,V(P_{\alpha_j})\cap V(T_2))$ maximizing
$|P_{\alpha_j}[u_{\alpha_j},x_j]|$. Then
\[
 |P_{\alpha_j}[u_{\alpha_j},x_j]|\geq d_j-1.
\]
For each $j\in[r-1]$,
\[
 P_{\alpha_j}[u_{\alpha_j},x_j]
 \cup\{x_jv_i,v_ix_{j+1}\}
 \cup P_{\alpha_{j+1}}[x_{j+1},u_{\alpha_{j+1}}]
\]
is a $(u_{\alpha_j},u_{\alpha_{j+1}})$-path in $H$ whose internal
vertices are disjoint from $P$ and whose length is at least
$d_j+d_{j+1}$. Since $P$ is a longest path in $G-X$, this path has length
at most $|P[u_{\alpha_j},u_{\alpha_{j+1}}]|$. Hence
\[
 |P[u_{\alpha_j},u_{\alpha_{j+1}}]|\geq d_j+d_{j+1}.
\]
Summing this for $j\in [r-1]$ gives
\[
 |P[u_{\alpha_1},u_i]|
 \geq\sum_{j=1}^{r-1}(d_j+d_{j+1})
 \geq\sum_{j=1}^r d_j
 =d_G(v_i,T_2)\geq\frac d2.
\]
Finally,
\[
 P[u_{\alpha_1},u_i]\cup P_i[u_i,v_i]
 \cup\{v_ix_1\}\cup P_{\alpha_1}[x_1,u_{\alpha_1}]
\]
is a cycle in $B$, and its length is at least
$|P[u_{\alpha_1},u_i]|\geq d/2$. This proves the claim and the lemma.
\end{proof}

We also use the following observation, which appeared
in~\cite[Lemma~3.1]{ma2026longest}; we include its short proof.

\begin{lemma}\label{lem:path-sets}
If $A,B$ are nonempty vertex sets on a path $R$, then some $a\in A$ and
$b\in B$ satisfy $|R[a,b]|\geq (|A|+|B|-2)/2$.
\end{lemma}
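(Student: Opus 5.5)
Order the vertices of $R$ linearly from one end to the other and take the extreme elements of $A$ and $B$. Let $a_{\min},a_{\max}$ be the first and last vertices of $A$ along $R$, and define $b_{\min},b_{\max}$ for $B$ in the same way. Among the four pairs $(a_{\min},b_{\max})$, $(a_{\max},b_{\min})$, $(a_{\min},b_{\min})$, $(a_{\max},b_{\max})$, we will show that one of the first two already has the required length.

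The key estimate is a span bound. The elements of $A$ are distinct vertices lying on the subpath $R[a_{\min},a_{\max}]$, so $|R[a_{\min},a_{\max}]|\geq |A|-1$. Likewise $|R[b_{\min},b_{\max}]|\geq |B|-1$.

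Now consider the two candidate distances
\[
d_1:=|R[a_{\min},b_{\max}]|,\qquad d_2:=|R[a_{\max},b_{\min}]|.
\]
We claim $d_1+d_2\geq |R[a_{\min},a_{\max}]|+|R[b_{\min},b_{\max}]|$, and prove it by placing positions on a line. Write positions as integers, with $\alpha_1\le\alpha_2$ for $a_{\min},a_{\max}$ and $\beta_1\le\beta_2$ for $b_{\min},b_{\max}$. Then
\[
|\alpha_1-\beta_2|+|\alpha_2-\beta_1|\ \geq\ (\beta_2-\alpha_1)+(\alpha_2-\beta_1)=(\alpha_2-\alpha_1)+(\beta_2-\beta_1).
\]
Combining this with the span bound gives $d_1+d_2\geq |A|+|B|-2$. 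Hence $\max\{d_1,d_2\}\geq (|A|+|B|-2)/2$, and the corresponding pair $(a,b)$ is the one we want.

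There is no real obstacle here. The only point needing care is that a length $|R[a,b]|$ is symmetric and equals $|\alpha-\beta|$, which is immediate since $R$ is a path. The case $a=b$, where the two sets share a vertex, is also harmless, since the inequality above holds for arbitrary reals.
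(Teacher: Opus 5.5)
Your proof is correct and is essentially the paper's argument: the paper also takes the first and last vertices $a_1,a_2$ of $A$ and $b_1,b_2$ of $B$ and uses $|R[a_1,b_2]|+|R[b_1,a_2]|\geq |R[a_1,a_2]|+|R[b_1,b_2]|\geq |A|+|B|-2$. Your coordinate computation (using $|x|\geq x$) just spells out the first inequality, which the paper states without comment.
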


\begin{proof}
Let $a_1,a_2$ and $b_1,b_2$ be respectively the first and last vertices
of $A$ and $B$ in an orientation of $R$. Then
\[
 |R[a_1,b_2]|+|R[b_1,a_2]|
 \geq |R[a_1,a_2]|+|R[b_1,b_2]|
 \geq |A|+|B|-2,
\]
so one of the two paths on the left has the required length.
\end{proof}

\begin{proof}[\normalfont\textbf{Proof of Theorem~\ref{thm:small-circ}}]
Retain the notation introduced above: $P=u_0\cdots u_p$ is
a longest path in $G-X$, $H$ is the component containing $P$, and $T$
and the paths $P_i$ are chosen as above, with each $P_i$ having endpoint $v_i$.

Write $Z:=\{v_0,\dots,v_p\}$. We first prove
\begin{equation}\label{eq:terminal-X-edges}
|X| > \sum_{v\in Z} d_G(v,X).
\end{equation}
Let $\{x_j:j\in[r]\}$ be the set of vertices $x\in V(X)$ for which
$N_G(x)\cap Z$ is nonempty, listed in cyclic order on $X$. If this
set is empty, \eqref{eq:terminal-X-edges} is immediate as the right-hand side is $0$.
We thus assume
$r\geq1$. For $j\in[r]$, let $A_j:=N_G(x_j)\cap Z$.
    
    First assume $r=1$. Then
    $\sum_{v\in Z}d_G(v,X)=|A_1|$.
    If $|A_1|\leq2$, then $|X|\geq3>|A_1|$, so~\eqref{eq:terminal-X-edges}
    follows. Hence assume $|A_1|\geq3$.
    Let $\alpha$ and $\beta$ be the smallest and largest elements of the index set $\{i:v_i\in A_1\}$, respectively.
    Then $\{x_1v_{\alpha}\}\cup P_{\alpha}\cup
    P[u_{\alpha},u_{\beta}]\cup P_{\beta}\cup\{v_{\beta}x_1\}$ is a
    cycle in $G$ with length at least
    $|P[u_{\alpha},u_{\beta}]|+2\geq |A_1|+1$.
    The maximality of $|X|$ then gives
    $|X|\geq |A_1|+1>\sum_{v\in Z}d_G(v,X)$.
    
    Now assume $r\geq 2$.
    Let $A_{r+1}:=A_1$ and $x_{r+1}:=x_1$. For each $j\in[r]$, applying
    Lemma~\ref{lem:path-sets} to $\{u_i:v_i\in A_j\}$ and
    $\{u_i:v_i\in A_{j+1}\}$ gives indices $a,b$ with
    $0\le a,b\le p$, $v_a\in A_j$, and
    $v_b\in A_{j+1}$ such that
    \[
        |P[u_a,u_b]|\geq
        \frac{|\{u_i:v_i\in A_j\}|+|\{u_i:v_i\in A_{j+1}\}|-2}{2}
        =\frac{|A_j|+|A_{j+1}|-2}{2}.
    \]
    If $|A_j|+|A_{j+1}|>2$, then the right-hand side is positive, so $a\neq b$. Hence
    \[
    \{x_jv_a\}\cup P_a[v_a,u_a]\cup P[u_a,u_b]\cup P_b[u_b,v_b]\cup \{v_bx_{j+1}\}
    \]
    is an $(x_j,x_{j+1})$-path that meets $X$ only at $\{x_j,x_{j+1}\}$ and has length at least $2+(|A_j|+|A_{j+1}|-2)/2$.
    If $|A_j|=|A_{j+1}|=1$, then $G$ still contains an $(x_j,x_{j+1})$-path whose internal vertices lie outside $X$ and whose length is at least $2$: if $a=b$, use $x_jv_ax_{j+1}$; otherwise, use the path displayed above. In both cases, the maximality of $|X|$ gives
    $\operatorname{dist}_X(x_j,x_{j+1})\ge 2+(|A_j|+|A_{j+1}|-2)/2$.
    Summing over $j$, using $A_{r+1}=A_1$, we obtain
    \begin{align*}
        |X| &\geq \sum_{j=1}^r \operatorname{dist}_X(x_j, x_{j+1})
        \geq \sum_{j=1}^r \left(2+\frac{|A_j|+|A_{j+1}|-2}{2}\right)
        > \sum_{j=1}^r |A_j| = \sum_{i=0}^p d_G(v_i,X).
    \end{align*}
    This proves~\eqref{eq:terminal-X-edges}.
    
    By Lemma~\ref{lem:tree-properties}(1), all neighbors of $v_i$ in $H$ belong to
    $T$. Hence
    $d_G(v_i,T)+d_G(v_i,X)=d_G(v_i)\geq\delta$ for every
    $0\leq i\leq p$.
    Therefore,
    \[
    \sum_{i=0}^p d_G(v_i,T)
      \geq(p+1)\delta-\sum_{v\in Z}d_G(v,X)
      >(p+1)\delta-|X|,
    \]
    where the strict inequality follows from~\eqref{eq:terminal-X-edges}.
    Set $w:=\lfloor\min\{k,p\}/3\rfloor>0$. 
    Let $W\subseteq Z$ consist of the $w$ vertices with the largest
    values of $d_G(v_i,T)$. Then
    \begin{align}\label{eq:sum-W}
        \sum_{v\in W}d_G(v,T)
      \geq\frac{w}{p+1}\sum_{i=0}^p d_G(v_i,T)
      >w\delta-\frac{|X|}{3}.
    \end{align}

    Let $\mathcal B$ be the family of blocks of $H$ containing an edge of $T$ incident with a vertex of $W$.
    For every $v\in W$, each edge of $H$ incident with $v$
    belongs to a block in $\mathcal B$.
    Indeed, by Lemma~\ref{lem:tree-properties}(1), every neighbor $u$ of $v$ in $H$ lies in $T$. If $uv\in E(T)$, the assertion follows from the definition of $\mathcal B$.
    Otherwise, the cycle $\{uv\}\cup T[u,v]$ contains an edge of $T$ incident with $v$, so the block containing this cycle belongs to $\mathcal B$.
    Since each edge belongs to a unique block, the sum of
    $d_G(v,V(B)\cap V(T))$ over all $B\in\mathcal B$ containing $v$ equals $d_G(v,T)$.
    Summing these equalities over $v\in W$ and using~\eqref{eq:sum-W} gives
    \begin{equation}\label{eq:block-degree-mass}
    \sum_{v\in W}\sum_{\substack{B\in\mathcal B\\V(B)\ni v}}
    d_G(v,V(B)\cap V(T))
    =\sum_{v\in W}d_G(v,T)
    >w\delta-\frac{|X|}{3}.
    \end{equation}

    Let $S$ be a set that consists of all vertices in $W$ together with every vertex adjacent to
    a vertex of $W$ in $T$. By Lemma~\ref{lem:tree-properties}(1),
    $|S|\leq3|W|=3w\leq\min\{k,p\}$. 
    By definition of $\mathcal B$, each $B\in\mathcal B$ contains at least one edge incident to $W$.
    Hence, every $B\in\mathcal B$ has $|S\cap V(B)|\geq2$.
    We now prove
    \begin{equation}\label{eq:block-path-bound}
       |X|>\sum_{B\in\mathcal B}\big|S\cap V(B)\big|\cdot f(B).
    \end{equation}
    By Theorem~\ref{thm:dirac},
    $|X|=c(G)\geq k\geq|S|$. Since $G$ is $k$-connected, Menger's
    theorem gives $|S|$ disjoint $(S,X)$-paths
    $\{L_i\}_{i\in[|S|]}$, where $L_i$ has endpoints $s_i\in S$ and
    $y_i\in V(X)$, and $S$ consists precisely of these $s_i$.
    We may assume that $y_1, \dots, y_{|S|}$ appear in cyclic order along $X$.
    For each $i\in[|S|]$, let $z_i$ be the vertex adjacent to $y_i$ in
    $L_i$. Since the paths $L_i$ are disjoint, the vertices
    $z_1,\dots,z_{|S|}$ are distinct and belong to $H$.

    For each $i\in[|S|]$, let $t_i$ be the vertex of $T(H)$ equal to
    $z_i$ when $z_i$ is a cut-vertex of $H$, and equal to the unique
    block containing $z_i$ otherwise. 
    Assume all indices are taken modulo $|S|$.
    For each $h\in[|S|]$, let
    $\mathcal B_h$ be the family of blocks in $\mathcal B$ on
    $T(H)[t_h,t_{h+1}]$.
    \begin{samepage}
    As shown in Figure~\ref{fig:block-paths}, the blocks of $H$ are represented by ellipses on the left and by white vertices of $T(H)$ on the right.
    The shaded ellipses denote the blocks in $\mathcal B$.
    The cut-vertices of $H$ are represented by black dots where ellipses meet on the left and by black vertices on the right.
    In the illustrated example, $\mathcal B_i=\{B_1,B_2,B_3\}$
    and $\mathcal B_{i+1}=\{B_2,B_3\}$.
    \end{samepage}

\begin{figure}[!t]
\centering
\begin{tikzpicture}[
    x=0.76cm,
    y=0.82cm,
    line cap=round,
    line join=round,
    block/.style={
        draw=black,
        fill=white,
        line width=0.75pt,
        ellipse,
        minimum width=1.60cm,
        minimum height=1.02cm,
        inner sep=0pt
    },
    selected block/.style={
        block,
        pattern=north east lines,
        pattern color=black
    },
    block node/.style={
        circle,
        draw=black,
        fill=white,
        line width=0.55pt,
        minimum size=4.0pt,
        inner sep=0pt
    },
    cut vertex/.style={
        circle,
        draw=black,
        fill=black,
        line width=0.55pt,
        minimum size=4.0pt,
        inner sep=0pt
    },
    endpoint/.style={
        circle,
        draw=black,
        fill=black,
        minimum size=4.0pt,
        inner sep=0pt
    }
]

\begin{scope}
    \node[block]          at (0.80,0) {};
    \node[selected block] at (2.90,0) {};
    \node[block]          at (5.00,0) {};
    \node[selected block] at (7.10,0) {};
    \node[selected block] at (9.20,0) {};
    \node[block]          at (11.30,0) {};

    \node[block] at (5.00, 1.24) {};
    \node[selected block] at (0.80, 1.24) {};
    \node[selected block] at (7.10,-1.24) {};
    \node[selected block] at (11.30,1.24) {};

    \node[font=\scriptsize,fill=white,inner sep=0.7pt,yshift=8mm]
        at (2.90,-0.12) {\(B_1\)};
    \node[font=\scriptsize,fill=white,inner sep=0.7pt,yshift=8mm]
        at (7.10,-0.12) {\(B_2\)};
    \node[font=\scriptsize,fill=white,inner sep=0.7pt,yshift=8mm]
        at (9.20,-0.12) {\(B_3\)};

    \node[cut vertex] at (1.85,0) {};
    \node[cut vertex] at (3.95,0) {};
    \node[cut vertex] at (6.05,0) {};
    \node[cut vertex] at (8.15,0) {};
    \node[cut vertex] at (10.25,0) {};
    \node[cut vertex] at (0.80, 0.62) {};
    \node[cut vertex] at (5.00, 0.62) {};
    \node[cut vertex] at (7.10,-0.62) {};
    \node[cut vertex] at (11.30,0.62) {};

    \node[font=\scriptsize,anchor=south east,yshift=-0.7mm]
        at (1.79,0.08) {\(z_i\)};
    \node[font=\scriptsize,anchor=south,yshift=-0.7mm,xshift=4mm]
        at (3.95,0.08) {\(z_{i+2}\)};

    \node[endpoint] at (11.52,0) {};
    \node[font=\scriptsize,anchor=south,yshift=-0.7mm]
        at (11.52,0.08) {\(z_{i+1}\)};
\end{scope}

\begin{scope}[xshift=10.35cm]
    \draw[line width=0.65pt]
        (0.00,0) -- (0.75,0) -- (1.50,0) -- (2.25,0) --
        (3.00,0) -- (3.75,0) -- (4.50,0) -- (5.25,0) --
        (6.00,0) -- (6.75,0) -- (7.50,0);
    \draw[line width=0.65pt]
        (0.00,0) -- (0.00,0.86) -- (0.00,1.72);
    \draw[line width=0.65pt]
        (3.00,0) -- (3.00,0.86) -- (3.00,1.72);
    \draw[line width=0.65pt]
        (4.50,0) -- (4.50,-0.86) -- (4.50,-1.72);
    \draw[line width=0.65pt]
        (7.50,0) -- (7.50,0.86) -- (7.50,1.72);

    \foreach \x in {0.00,1.50,3.00,4.50,6.00,7.50}
        \node[block node] at (\x,0) {};
    \node[block node] at (0.00, 1.72) {};
    \node[block node] at (3.00, 1.72) {};
    \node[block node] at (4.50,-1.72) {};
    \node[block node] at (7.50, 1.72) {};

    \foreach \x in {0.75,2.25,3.75,5.25,6.75}
        \node[cut vertex] at (\x,0) {};
    \node[cut vertex] at (0.00, 0.86) {};
    \node[cut vertex] at (3.00, 0.86) {};
    \node[cut vertex] at (4.50,-0.86) {};
    \node[cut vertex] at (7.50, 0.86) {};

    \node[font=\scriptsize,anchor=south] at (1.50,0.16)
        {\(B_1\)};
    \node[font=\scriptsize,anchor=south] at (4.50,0.16)
        {\(B_2\)};
    \node[font=\scriptsize,anchor=south] at (6.00,0.16)
        {\(B_3\)};
\node[font=\scriptsize,anchor=north] at (0.75,-0.14) {\(t_i\)};
\node[font=\scriptsize,anchor=north] at (2.25,-0.14) {\(t_{i+2}\)};
\node[font=\scriptsize,anchor=north] at (7.50,-0.14) {\(t_{i+1}\)};

\end{scope}
\end{tikzpicture}
\caption{The blocks of $H$ and the corresponding block-cut tree.}
\label{fig:block-paths}
\end{figure}

    Fix $h\in[|S|]$ and a longest $(z_h,z_{h+1})$-path $R$ in $H$. 
    Then $R$ contains an edge of every block on $T(H)[t_h,t_{h+1}]$, and in particular of every block in $\mathcal{B}_h$.
    By Lemma~\ref{lem:block-convexity}(1), the edges of $R$ in each $B\in\mathcal B_h$ form a nontrivial subpath of $R$.
    By Definition~\ref{def:fb} and the maximality of $|R|$, this subpath has length at least $f(B)$.
    Moreover, $y_hz_h\cup R\cup z_{h+1}y_{h+1}$ is a $(y_h,y_{h+1})$-path in $G$ whose internal vertices lie outside $X$, so the maximality of $|X|$ gives
    \[
       \operatorname{dist}_X(y_h,y_{h+1})> |R|\geq
       \sum_{B\in\mathcal B_h}f(B).
    \]
    Summing over $h\in[|S|]$ gives
    \begin{equation}\label{eq:block-family-sum}
       |X|\geq\sum_{h=1}^{|S|}
       \operatorname{dist}_X(y_h,y_{h+1})
       >\sum_{h=1}^{|S|}\sum_{B\in\mathcal B_h}f(B).
    \end{equation}

    To prove~\eqref{eq:block-path-bound}, we consider how many times a fixed $B\in \mathcal{B}$ is counted in the right-hand side of~\eqref{eq:block-family-sum}.
    We claim that for a fixed $B\in\mathcal B$, if $s_i,s_j\in V(B)$ for distinct indices $i$ and $j$, then $B\in\mathcal B_h$ for some index $h$ in the interval $[i,j)$ along the cyclic
    order $1,\ldots,|S|,1$. 
    Since $T(H)$ is a tree, $T(H)[t_i,t_j]$ is contained in
    $\bigcup_{h\in[i,j)}T(H)[t_h,t_{h+1}]$,
    where $[i,j)$ is the cyclic interval specified above.
    Since $B\in\mathcal B$, it suffices to show that
    $B$ lies on $T(H)[t_i,t_j]$.
    Suppose not. Then $t_i$ and $t_j$ lie in the same component of $T(H)-B$.
    However, $L_i[s_i,z_i]$ and $L_j[s_j,z_j]$ are disjoint paths in $H$,
    each with one endpoint in $B$, while their other endpoints $z_i,z_j$
    correspond to $t_i,t_j$, respectively. Hence both paths must contain
    the same cut-vertex of $H$ in $B$, a contradiction.
    This proves the claim.
    For a fixed $B\in\mathcal B$, apply the claim to consecutive indices $i$ with $s_i\in V(B)$, in their cyclic order.
    These $|S\cap V(B)|$ cyclic intervals are pairwise disjoint, and each contains an index $h$ for which $B\in\mathcal B_h$.
    Thus there are at least $|S\cap V(B)|$ such indices $h$.
    Together with
    \eqref{eq:block-family-sum}, this proves
    \eqref{eq:block-path-bound}.

    Since $W\subseteq S$, Lemma~\ref{lem:fB} gives, for every
    $B\in\mathcal B$,
    \[
    |S\cap V(B)|\cdot f(B) \geq |W\cap V(B)|\cdot f(B)
      \geq\frac18\sum_{v\in W\cap V(B)}
         d_G(v,V(B)\cap V(T)).
    \]
    Summing this inequality over $B\in \mathcal{B}$ and using~\eqref{eq:block-degree-mass} and~
    \eqref{eq:block-path-bound}, we obtain
    \[
       |X|>\sum_{B\in\mathcal B}\big|S\cap V(B)\big|\cdot f(B)\geq\frac18\sum_{v\in W}
          \sum_{\substack{B\in\mathcal B\\V(B)\ni v}}
          d_G(v,V(B)\cap V(T))
       >\frac{w\delta-|X|/3}{8}.
    \]
    Rearranging this gives $|X|>3w\delta/25\geq(\min\{k,p\}-2)\delta/25$. This proves Theorem~\ref{thm:small-circ}.
\end{proof}

\section{Completing the proof of Theorem~\ref{thm:smith-linear}}\label{sec:complete}
Now we complete the proof of our main theorem.

\begin{proof}[\normalfont\textbf{Proof of Theorem~\ref{thm:smith-linear}}]
Let $X$ and $Y$ be two longest cycles in a $k$-connected graph $G$, and
let $m:=|V(X)\cap V(Y)|$. Since $k\geq2$, two longest cycles in $G$
meet in at least two vertices, so $m\geq2$. If $c(G)>23mk$, then
Lemma~\ref{lem:large-circ} gives $m\geq k/600$.
We may therefore assume that $c(G)\leq23mk$.

Suppose for contradiction that $k>600m$.
Let $p:=p(G-X)$. Since the at most $m$ nontrivial maximal paths of
$Y-X$ contain at least $c(G)-2m$ edges in total, we have
$p\geq c(G)/m-2$.
We verify that both $k-2$ and $p-2$ are greater than $25c(G)/k$. Since $k$ is an integer and $k>600m$,
\[
 k-2\geq600m-1>575m\geq\frac{25c(G)}k.
\]
Moreover, using $c(G)\geq k$ (which can be derived from Theorem~\ref{thm:dirac}), we have
\[
p-2\geq\frac{c(G)}m-4
\geq\left(\frac{k}{m}-4\right)\frac{c(G)}k
>\frac{25c(G)}k.
\]
The above two inequalities together imply that
$\min\{k,p\}-2>25c(G)/k$.
Applying Theorem~\ref{thm:small-circ} and using $\delta\geq k$, we obtain
\[
 c(G)=|X|>\frac{(\min\{k,p\}-2)\cdot \delta}{25}\geq \frac{(\min\{k,p\}-2)\cdot k}{25}>c(G),
\]
a contradiction. This completes the proof of Theorem~\ref{thm:smith-linear}.
\end{proof}

\section{Concluding remarks}\label{sec:conclude}
In this paper, we prove that any two longest cycles in a $k$-connected graph share at least $k/600$ vertices, establishing Smith's conjecture up to a constant factor. 
The resulting constant $1/600$ has not been optimized. The best constant we are aware of is close to $1/20$.
We conclude with several applications and related problems.

\subsection{A linear bound for Hippchen's conjecture}
Our proof also applies to the following conjecture of Hippchen~\cite{Hippchen2008}, which is an analogue of Smith's conjecture for longest paths.

\begin{conjecture}[Hippchen's conjecture~\cite{Hippchen2008}]
\label{conj:Hippchen}
For every $k\geq 2$, any two longest paths in a $k$-connected graph
have at least $k$ common vertices.
\end{conjecture}

Recall that $p(G)$ denotes the length of a longest path in $G$. 
We prove a linear-bound version of Conjecture~\ref{conj:Hippchen} by combining Theorem~\ref{thm:smith-linear} with the standard reduction obtained by adding a universal vertex (cf.~\cite{gutierrez2024two}).

\begin{thm}
\label{thm:longest-path-intersection}
For every integer $k\geq 2$, any two longest paths in a
$k$-connected graph share at least $\left\lfloor\frac{k}{600}\right\rfloor$ common vertices.
\end{thm}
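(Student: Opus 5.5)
The plan is to reduce to Theorem~\ref{thm:smith-linear} via the universal-vertex construction. Let $G$ be $k$-connected, with two longest paths $P_1,P_2$, and set $m:=|V(P_1)\cap V(P_2)|$. If $k<600$, the claimed bound $\lfloor k/600\rfloor$ is $0$ and there is nothing to prove, so assume $k\geq 600$. Form $G^+$ from $G$ by adding a new vertex $z$ adjacent to every vertex of $G$.

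The first step is to check that $G^+$ is $(k+1)$-connected. Every vertex cut of $G^+$ must contain $z$, since $z$ is adjacent to everything. Removing $z$ leaves $G$, so every vertex cut of $G^+$ has size at least $k+1$. Moreover, $|V(G^+)|\geq k+2$, so $G^+$ is not complete on too few vertices to count as $(k+1)$-connected.

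The second step is to relate longest cycles of $G^+$ to longest paths of $G$. Every path $Q$ in $G$ with at least two vertices yields a cycle $Q+z$ in $G^+$ of length $|Q|+2$. Conversely, every cycle of $G^+$ either avoids $z$, in which case it is a cycle of $G$ and contains a path of length one less, or passes through $z$, in which case deleting $z$ leaves a path of $G$ of length two less. Since $G$ is $2$-connected, it has a path with at least two vertices, so $c(G^+)=p(G)+2$. The cycle through $z$ handles the comparison: any cycle $C$ avoiding $z$ has $|C|\leq p(G)+1<p(G)+2$. Hence $X:=P_1+z$ and $Y:=P_2+z$ are longest cycles of $G^+$.

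The final step applies Theorem~\ref{thm:smith-linear} to $G^+$, which is $(k+1)$-connected with $k+1\geq2$. This gives $|V(X)\cap V(Y)|\geq (k+1)/600$. The intersection $V(X)\cap V(Y)$ equals $(V(P_1)\cap V(P_2))\cup\{z\}$, so $m+1\geq (k+1)/600$. Since $m$ is an integer, $m\geq \lceil (k+1)/600\rceil-1\geq\lfloor k/600\rfloor$. The last inequality holds because $\lceil (k+1)/600\rceil\geq \lfloor k/600\rfloor+1$, which follows from $(k+1)/600>\lfloor k/600\rfloor$.

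I expect no step here to be a genuine obstacle. The only points needing care are confirming that the longest cycles of $G^+$ are exactly the cycles $Q+z$ built from longest paths $Q$ (in particular, that no cycle avoiding $z$ is longer), and the integer rounding at the end.
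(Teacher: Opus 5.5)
Your proposal is correct and follows essentially the same route as the paper. You add a universal vertex $z$ to obtain a $(k+1)$-connected graph $G^+$ with $c(G^+)=p(G)+2$, view $P_1+z$ and $P_2+z$ as longest cycles of $G^+$, apply Theorem~\ref{thm:smith-linear}, and finish with $\lceil (k+1)/600\rceil-1=\lfloor k/600\rfloor$; you also spell out the connectivity and circumference verifications that the paper only asserts.
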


\begin{proof}
Let $G^+$ be obtained from $G$ by adding a new vertex $z$ adjacent to
every vertex of $G$. 
Then $G^+$ is a $(k+1)$-connected graph with $c(G^+)=p(G)+2$.
Consider any two longest paths $P,Q$ in $G$.
Then $zPz$ and $zQz$ are two longest cycles of $G^+$. By Theorem~\ref{thm:smith-linear}, we have
$$|V(P)\cap V(Q)|
      =|V(zPz)\cap V(zQz)|-1
      \geq \left\lceil\frac{k+1}{600}\right\rceil-1
      =\left\lfloor\frac{k}{600}\right\rfloor.$$
This proves Theorem~\ref{thm:longest-path-intersection}.
\end{proof}

\noindent\textbf{Remark.}
The same reduction shows that Smith's conjecture for $(k+1)$-connected
graphs would imply Hippchen's conjecture for $k$-connected graphs.

\subsection{Implications for vertex-transitive graphs}
Using Theorem~\ref{thm:smith-linear}, we can derive a lower bound on the length of longest cycles in vertex-transitive graphs.

There are two famous conjectures concerning vertex-transitive graphs.
Lov\'asz's conjecture~\cite{lovasz1969combinatorial} asserts that every
connected vertex-transitive graph has a Hamiltonian path, while
Thomassen's conjecture (see~\cite{barat2010his}) asserts that, apart
from finitely many exceptions, every connected vertex-transitive graph
has a Hamiltonian cycle. 
Both conjectures remain wide open in general, but sufficiently high
degree is known to guarantee Hamiltonicity. Christofides, Hladk\'y,
and M\'ath\'e~\cite{christofides2014hamilton} proved that, for every
fixed $\alpha>0$, every sufficiently large connected vertex-transitive
graph on $n$ vertices with degree at least $\alpha n$ contains a
Hamiltonian cycle. More recently, Bedert,
Dragani\'c, M\"uyesser, and Pavez-Sign\'e~\cite{bedert2026lovasz}
showed that, for some absolute constant $\varepsilon>0$, every sufficiently large connected Cayley graph on $n$ vertices with degree greater
than $n^{1-\varepsilon}$ contains a
Hamiltonian cycle.

On the other hand, substantial progress has been made in improving lower bounds for general vertex-transitive graphs.
Let $G$ be a connected vertex-transitive graph on $n$ vertices.
A classical result of
Babai~\cite{babai1979long} gives $c(G)=\Omega(n^{1/2})$. 
More than four decades later, DeVos~\cite{devos2023longer} improved this to
$c(G)\geq(1-o(1))n^{3/5}$.
Groenland, Longbrake, Steiner, Turcotte,
and Yepremyan~\cite{groenland2024longest} then obtained
$c(G)=\Omega(n^{13/21})$. Norin, Steiner, Thomass\'e, and
Wollan~\cite{norin2025small} further improved the bound to
$c(G)=\Omega(n^{9/14})$. Recently, Buci\'c, Christoph,
Pokrovskiy, and Steiner~\cite{bucic2026towards} proved the current best
bound $c(G)=\Omega\bigl(n^{2/3-o(1)}\bigr).$

A double-counting lemma of DeVos~\cite{devos2023longer} implies that
if every two longest cycles in a vertex-transitive graph $G$ on $n$
vertices meet in at least $m$ vertices, then $c(G)\geq\sqrt{mn}.$
Since a connected $d$-regular vertex-transitive graph with $d\geq 2$ is $\lceil\frac{2(d+1)}{3}\rceil$-connected (see~\cite{godsil2013algebraic}), Theorem~\ref{thm:smith-linear} implies that every two longest cycles share at least $(d+1)/900$ vertices. This gives the following theorem.

\begin{thm}
\label{thm:vertex-transitive-circumference}
For every integer $d\geq 2$, every connected $d$-regular
vertex-transitive graph $G$ on $n$ vertices satisfies
$c(G)=\Omega\bigl(\sqrt{dn}\bigr).$
\end{thm}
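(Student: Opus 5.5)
The argument simply combines the three ingredients already stated before Theorem~\ref{thm:vertex-transitive-circumference}: the connectivity of vertex-transitive graphs, Theorem~\ref{thm:smith-linear}, and DeVos's double-counting lemma.

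Let $G$ be a connected $d$-regular vertex-transitive graph on $n$ vertices with $d\geq 2$. We may assume $G$ is not a cycle, since the bound is trivial when $d=2$ ($c(G)=n\geq\sqrt{2n}$ for $n\geq 3$). By the cited result from~\cite{godsil2013algebraic}, $G$ is $k$-connected with $k:=\lceil 2(d+1)/3\rceil\geq 2$.

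Applying Theorem~\ref{thm:smith-linear} with this $k$ shows that any two longest cycles of $G$ share at least
\[
m:=\left\lceil \frac{k}{600}\right\rceil\geq \frac{2(d+1)}{1800}=\frac{d+1}{900}
\]
vertices; in particular $m\geq 1$ always holds.

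Next we invoke the double-counting lemma of DeVos~\cite{devos2023longer}. I recall it as follows. Fix a longest cycle $C$, and average $|V(C)\cap V(\sigma C)|$ over the automorphisms $\sigma$ in $\mathrm{Aut}(G)$. Since the group acts transitively on vertices, this average equals $|C|^2/n$. Each $\sigma C$ is itself a longest cycle, so every intersection is at least $m$, which forces $|C|^2/n\geq m$. If the lemma is only available in the form ``some pair meets in at most $c(G)^2/n$'', the same conclusion follows. Either way we get $c(G)\geq\sqrt{mn}$.

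Combining the two bounds gives
\[
c(G)\geq\sqrt{\frac{(d+1)n}{900}}\geq \frac{\sqrt{dn}}{30}=\Omega\bigl(\sqrt{dn}\bigr).
\]
There is no real obstacle here. The only point needing care is checking that the connectivity bound yields $k\geq 2$, so that Theorem~\ref{thm:smith-linear} applies; this holds for every $d\geq 2$, since $\lceil 2\cdot 3/3\rceil=2$.
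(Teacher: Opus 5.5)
Your proof is correct and follows the paper's route: the $\lceil 2(d+1)/3\rceil$-connectivity of vertex-transitive graphs, then Theorem~\ref{thm:smith-linear} to get intersections of size at least $(d+1)/900$, then DeVos's bound $c(G)\geq\sqrt{mn}$. Your averaging over $\mathrm{Aut}(G)$ is a valid self-contained proof of that last lemma, since transitivity makes $\sigma^{-1}(v)$ uniform on $V(G)$ and so the expected intersection is exactly $c(G)^2/n$.
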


For connected $d$-regular vertex-transitive graphs,
Theorem~\ref{thm:vertex-transitive-circumference} improves the exponent
of $d$ in the previous bounds by Chen, Faudree,
and Gould~\cite[Theorem~4]{chen1998intersections} and Ma and Zhao~\cite[Corollary~1.7]{ma2025intersections}. 
It also improves the bound $n^{2/3-o(1)}$ of Buci\'c,
Christoph, Pokrovskiy, and Steiner~\cite{bucic2026towards} in the range
$d=\Omega(n^{1/3})$.

\subsection{A possible strengthening of Theorem~\ref{thm:smith-linear}}\label{sec:further-problems}
As pointed out in the introduction, all quantitative approaches to Conjecture~\ref{conj:smith} prior to this work rely on Tur\'an-type and supersaturation arguments.
One advantage of these approaches is that they establish the following stronger conclusion: if two longest cycles meet in few vertices, then a relatively small vertex cut separates the two cycles.
Such a conclusion can be used, via an argument of DeVos~\cite{devos2023longer}, to derive lower bounds on $c(G)$ for vertex-transitive graphs $G$.
Motivated by this, we propose the following strengthening of Theorem~\ref{thm:smith-linear}, which was also recently suggested by Buci\'{c}, Christoph, Pokrovskiy, and Steiner~\cite{bucic2026towards}.

\begin{conjecture}\label{conj:local-separation}
There exists a constant $C>0$ such that, for every $2$-connected graph $G$ and any two longest cycles $X$ and $Y$ in $G$, there is a set $S\subseteq V(G)$ with
$|S|\leq C\cdot |V(X)\cap V(Y)|$ that separates $X$ and $Y$ in $G$.
\end{conjecture}

We remark that Conjecture~\ref{conj:local-separation} would imply
$c(G)=\Omega(n^{2/3})$ for every connected vertex-transitive graph $G$
on $n$ vertices. This would remove the $o(1)$ loss in the exponent from the recent bound
$c(G)=\Omega(n^{2/3-o(1)})$ of Buci\'c, Christoph, Pokrovskiy, and
Steiner~\cite{bucic2026towards}.

\subsection{Some further problems}
We would like to mention a question explicitly posed by Bondy and
Locke~\cite{bondy1981relative}: what is the largest constant $\alpha$
such that every $3$-connected graph $G$ satisfies
$c(G)\geq\alpha\cdot p(G)$?
They proved that the optimal constant lies between $2/5$ and $1/2$;
see also the generalization to $k$-connected graphs
in~\cite{Locke1982}.
The exact value of the constant $\alpha$ remains unknown.
In Theorem~\ref{lem:amplifier}, we obtain a lower bound of $1/6$ for amplifiers. 
This constant is not best possible. By adapting the argument of Bondy and Locke~\cite{bondy1981relative}, at the cost of substantially greater technical complexity, the bound can be improved to $2/5$.
Determining the optimal constant in Theorem~\ref{lem:amplifier} might shed some light on the above question of Bondy and Locke.

We conclude this paper with the following problem, attributed to Gallai in the
British Combinatorial Conference problem list
\cite{Cameron1997ResearchProblems}: does every triple of longest paths
in a finite connected graph have a common vertex?

\bigskip

{\noindent \bf AI Declaration.} No AI tools have been used in any way to generate the mathematical ideas of this paper.

\end{document}